\documentclass[12pt]{amsart}
\usepackage{amscd, amsthm}
\usepackage{amsmath,amsfonts,amsthm,epsfig,latexsym,graphicx,amssymb,psfrag}
\usepackage{amssymb,amsthm,amsmath,xspace}
\usepackage[english]{babel}
\usepackage{xcolor}
\usepackage{enumerate}
\usepackage{array} 
\usepackage{float}

\newtheorem{theo}{Theorem}
\newtheorem{theor}{Theorem}[section]
\newtheorem{coro}{Corollary}
\newtheorem{prop}{Proposition}[section]
\newtheorem{lemm}[prop]{Lemma}
\newtheorem{propri}[prop]{Properties}

\newtheorem{claim}[prop]{Claim}
\newtheorem{exem}{Examples}[section]

\newtheorem*{rem}{Remark}

\newtheorem*{nota}{Notations}

 \newtheoremstyle{defi}
 {3 pt}{3 pt}{\rm}{3 pt}{\bf}{\hskip -0.5 pt . }{0 pt}
 {\thmname{#1}\thmnumber{#2}\thmnote{\textnormal(#3)}}
 \theoremstyle{defi}

 \newtheoremstyle{rem}
 {3 pt}{3 pt}{\rm}{3 pt}{\bf}{\hskip -0.5 pt . }{0 pt}
 {\thmname{#1}\thmnumber{#2}\thmnote{\textnormal(#3)}}
 \theoremstyle{rem}

 \newtheorem{defi}[prop]{Definition }
  
 \newtheorem{rema}[prop]{Remark }
 

\textwidth=16 true cm \textheight=22.7 true cm \voffset=-1.1 true cm \hoffset = -1.6 true cm

\newcommand{\N}{\mathbb{N}} \newcommand{\Z}{\mathbb{Z}} \newcommand{\Q}{\mathbb{Q}}  

\def \ds {\displaystyle} \def \sct {\scriptstyle}

\newcommand{\iet}{\mathcal G}
\newcommand{\ietd}{{\mathcal G}_2}
\newcommand{\PL}{\textsf{PL}}
\newcommand{\fix}{\textsf{Fix}}
\newcommand{\Per}{\bf \textsf{Per}} 
\newcommand{\CR}{\textsf{C}}
\newcommand{\CRd}{{\bf\textsf{C}}}
\newcommand{\BP}{\textsf{BP}}
\newcommand{\PIET}{\textsf{M}\mathcal G_2} 
\newcommand{\pbul}{\scriptstyle \bullet}
\newcommand{\pal}{\scriptstyle \alpha}
\newcommand{\pp}{\scriptstyle +} \newcommand{\pmi}{\scriptstyle -}
\newcommand{\pl}{P} 
\newcommand{\id}{\textsf{Id}} %
\newcommand{\G}{G} 


\def\ra{\mathop{\ \ \longrightarrow \ \ }} \def\Ra{\mathop{\Longrightarrow}}  \def\nearsubset{\mathop{\subset}}
\def\LRA{\mathop{\Longleftrightarrow}}



\title[Interval Exchange Transformations groups]{\small Interval Exchange Transformations groups \\ Free actions and dynamics of virtually abelian groups} 

\vskip 2 truecm

\author{ Nancy Guelman and Isabelle Liousse}

\begin{document}

\address{{\bf  Nancy  GUELMAN}, \rm{IMERL, Facultad de Ingenier\'{\i}a, {Universidad de la Rep\'ublica, C.C. 30, Montevideo, Uruguay.} \textit{nguelman@fing.edu.uy}}.}

\address{{\bf Isabelle LIOUSSE}, \rm{Univ. Lille, CNRS, UMR 8524 - Laboratoire Paul Painlevé, F-59000 Lille, France. \textit {isabelle.liousse@univ-lille.fr}}.}
\begin{abstract}

H\"older's theorem states that any group acting freely by circle homeomorphisms is abelian, this is no longer true for interval exchange transformations: we first give examples of free actions of non abelian groups.

Then after noting that finitely generated groups acting freely by IET are virtually abelian, we classify the free actions of groups containing a copy of $\mathbb Z^2$, showing that they are ``conjugate" to actions in some specific subgroups $G_n$, namely $G_n \simeq  (\ietd)^n \rtimes\mathcal S_n $ where $\ietd$ is the group of circular rotations seen as exchanges of $2$ intervals and $\mathcal S_n$ is the group of permutations of $\{1,...,n\}$ acting by permuting the copies of $\ietd$.

We also study non free actions of virtually abelian groups and we obtain the same conclusion for any such group that contains a conjugate  to a product of restricted rotations with disjoint supports and without periodic points.

As a consequence, we provide examples of non virtually nilpotent subgroups of IETs. In particular, we show that the group generated by $f\in G_n$ periodic point free and $g\notin G_n$ is not virtually nilpotent. Moreover, we exhibit examples of finitely generated non virtually nilpotent subgroups of IETs, some of them are metabelian and others are not virtually solvable.
\end{abstract}

\maketitle

\section{Introduction.}

\begin{defi} An {\bf interval exchange transformation (IET)} is a bijective map $f: [0,1) \rightarrow [0,1)$ defined by a finite partition of the unit interval into half-open subintervals and a  reordering of these intervals by translations. If the partition has cardinality $r$, we say that $f$ is an $\mathbf{r}${\bf -IET}. 

\smallskip

Circular rotations identify with 2-IETs and an IET $g$ whose support is a subinterval $I=[a,b)$ is a {\bf restricted rotation} if the orientation preserving affine map from $I$ to $[0,1)$ conjugates $g_{\vert I}$ to a 2-IET. It is denoted by $\boldsymbol{R_{\alpha, I}}$ where $\alpha\in \frac{[a,b)}{b\sim a}$ is represented by $g(a)-a$.

We denote by $\boldsymbol \iet$ the group consisting of all IETs and by $\boldsymbol {\ietd}$ the group consisting of all circular rotations, $\boldsymbol{R_\alpha}$ ($\alpha\in \mathbb S^1$), regarded as 2-IETs.

\smallskip

An {\bf affine interval exchange transformation (AIET)} is a bijective map $f: [0,1) \rightarrow [0,1)$ defined by a finite partition of the unit interval into half-open subintervals such that the restriction to each of these intervals is an orientation preserving affine map.

An AIET that is a homeomorphism of $[0,1)$ is called {\bf PL-homeomorphism}.

\smallskip

A group is said to {\bf act freely} if the only element acting with fixed points is the trivial element, in particular a free action is faithful.
\end{defi}

\smallskip

By H\"older's Theorem (see e.g. Theorem 2.2.32 of \cite{NaB}), groups that act freely by circle homeomorphisms are abelian.

\begin{rema}\label{R1Gro} Since translations commute, the orbit of any point under a finitely generated subgroup $H$ of $\iet$ has polynomial growth. Therefore, if $H$ acts freely then it has polynomial growth so it is virtually nilpotent by {Gromov} \cite{Gro}. In addition, as any finitely generated nilpotent group is polycyclic, Theorem 2 of \cite{DFG} implies that $H$ is virtually abelian.
\end{rema}

In order to show that H\"older's Theorem is no longer true for free actions by elements of $\iet$, we let the reader check that: 

\begin{enumerate}

\item The following IETs $a$ and $b$ generate a free and minimal action of the Baumslag-Solitar group $BS(1,-1):=\langle \ a,b \ \vert \  ba b^{-1}=a^{-1} \ \rangle$ provided that $1$, $\alpha$, $\beta_1$ and $\beta_2$ are rationally linearly independent numbers. 

\begin{figure}[H]  
\scalebox{.50}{
\begin{picture}(450,200)
\put(0,0){\line(1,0){200}} \put(0,200){\line(1,0){200}}
\put(0,0){\line(0,1){200}} \put(200,0){\line(0,1){200}}

\put(-5,-20){$0$}
\put(100,-20){${\frac{1}{2}}$}
\put(195,-20){$1$}

\put(-15,100){$\frac{1}{2}$}
\put(-15,200){$1$}

\put(65,0){\dashbox(0,100){}}
\put(135,100){\dashbox(0,100){}}
\put(100,0){\dashbox(0,200){}}

\put(0,100){\dashbox(200,0){}}

\put(0,35){\line(1,1){65}}
\put(65,0){\line(1,1){35}}
\put(100,165){\line(1,1){35}}
\put(135,100){\line(1,1){65}}

\put (-15,35){$\alpha$}
\put (-30,165){$1-\alpha$}
\put(55,-20){${\frac{1}{2}}-\alpha$}
\put (135,-20){${\frac{1}{2}}+\alpha$}

\put (75,-45){\large{IET \ $\displaystyle a$}}

\put(240,0){\line(1,0){200}} \put(240,200){\line(1,0){200}}
\put(240,0){\line(0,1){200}} \put(440,0){\line(0,1){200}}

\put(235,-20){$0$}
\put(335,-20){${\frac{1}{2}}$}
\put(435,-20){$1$}

\put(340,0){\dashbox(0,200){}}

\put(240,100){\dashbox(200,0){}}

\put(240,145){\line(1,1){55}}
\put(295,100){\line(1,1){45}}
\put(340,60){\line(1,1){40}}
\put(380,0){\line(1,1){60}}

\put (207,145){$\frac{1}{2}+\beta_1$}
\put (325,55){$\beta_2$}

\put (320,-45){\large{IET \ $\displaystyle b$}}

\end{picture} }
\end{figure} \ 

\smallskip

Moreover, this example completes the authors result (\cite{GL}) proving that the Baumslag-Solitar groups (\cite{BS}) defined by $BS(m,n)= \langle\ a,b \ \vert \  ba^m b^{-1}=a^n\ \rangle$ do not act faithfully by AIET when $|m| \neq |n|$.

\item The following IETs $a$ and $b$ generate a free and minimal action of the crystallographic group $C_1 =\langle a, b \  |  \ b a^2  b^{-1} = a^{-2},  a b^2 a^{-1} =b^{-2}\rangle$ provided that $1$, $\alpha$ and $\beta$ are rationally linearly independent numbers.

\begin{figure}[H] 
\scalebox{.60}{
\begin{picture}(450,200)
\put(0,0){\line(1,0){200}} \put(0,200){\line(1,0){200}}
\put(0,0){\line(0,1){200}} \put(200,0){\line(0,1){200}}

\put(-5,-20){$0$}
\put( 50,-15){$\frac{1}{4}$}
\put(100,-20){${\frac{1}{2}}$}
\put(147,-15){$\frac{3}{4}$}
\put(195,-20){$1$}

\put(-15,50){$\frac{1}{4}$}
\put(-15,100){$\frac{1}{2}$}
\put(-15,150){$\frac{3}{4}$}
\put(-15,200){$1$}

\put(50,0){\dashbox(0,200){}}
\put(100,0){\dashbox(0,200){}}
\put(150,0){\dashbox(0,200){}}

\put(0,50){\dashbox(200,0){}}
\put(0,100){\dashbox(200,0){}}
\put(0,150){\dashbox(200,0){}}

\put(0,100){\line(1,1){50}}

\put (50,170){\line(1,1){30}}
\put (80,150){\line(1,1){20}}
\put (15,170){$\frac{3}{4}+\alpha \ \pbul$}

\put (100,30){\line(1,1){20}}
\put (120,0){\line(1,1){30}}
\put (65,30){$\frac{1}{4}-\alpha \ \pbul$}

\put (150,50){\line(1,1){50}}
\put ( 75,-41){\large{IET  \ $a$}}

\put(240,0){\line(1,0){200}} \put(240,200){\line(1,0){200}}
\put(240,0){\line(0,1){200}} \put(440,0){\line(0,1){200}}

\put(235,-20){$0$}
\put(285,-15){${\frac{1}{4}}$}
\put(335,-20){${\frac{1}{2}}$}
\put(395,-15){${\frac{3}{4}}$}
\put(435,-20){$1$}

\put(290,0){\dashbox(0,200){}}
\put(340,0){\dashbox(0,200){}}
\put(390,0){\dashbox(0,200){}}

\put(240,50){\dashbox(200,0){}} 
\put(240,100){\dashbox(200,0){}}
\put(240,150){\dashbox(200,0){}}

\put (240,160){\line(1,1){40}}
\put (280,150){\line(1,1){10}}
\put (211,160){$\frac{3}{4}+ \beta$}
\put(290,100){\line(1,1){50}}
\put (340,90){\line(1,1){10}}
\put (350,50){\line(1,1){40}}
\put (306,87){$\frac{1}{2}-\beta$}
\put(390,0){\line(1,1){50}}
\put ( 320,-41){\large{IET  \ $b$}}

\end{picture}} 
\end{figure} \

\end{enumerate}

Note that the two previous groups are subgroups of some very specific subgroups of $\iet$, namely the groups $G_n$ described in the following

\begin{defi} \label{Gn}   
Let $n$ be a positive integer and $\mathbb S_n= [0, \frac{1}{n}]/{\scriptstyle 0=\frac{1}{n}}$ be the circle of length $\frac{1}{n}$.  We define $\boldsymbol{G_n}$ as the set of IETs on $[0,1)$ that preserve the partition $[0,1)=[0,\frac{1}{n})\cup [\frac{1}{n}, \frac{2}{n}) ... \cup [\frac{n-1}{n},1)$ and whose restrictions to the intervals $I_i=[\frac{i-1}{n}, \frac{i}{n})$ are IETs with only one interior discontinuity.

\smallskip

For $g\in G_n$, we define $\boldsymbol{\sigma_g}$ as the element of $\mathcal S_n$ given by $\sigma_g(i)=j$ if $g(I_i) = I_j$. 

It follows that $g_{\vert I_i} = R_{\alpha_i,\sigma_g(i)}$ where $R_{\alpha_i,\sigma_g(i)}(x) = R_{\alpha_i, I_i}(x) +\frac{\sigma_g(i)-i}{n}$ for $x\in I_i$.
\smallskip

We define the \textbf{rotation vector} of $g$ by $\boldsymbol{\alpha_g}=(\alpha_1, ..., \alpha_n)=(\alpha_1(g), ..., \alpha_n(g)) \in {\mathbb S_n} ^n$ and we denote $\boldsymbol{g=(\alpha_g, \sigma_g)}$.
\end{defi}

\medskip

\begin{exem}
The IETs $a$ and $b$, represented above by their graphs, are expressed as: \hfill\break
(1) $a,b \in G_2$, \ \ $\left\{\begin{array}{ll} \alpha_a= (\alpha, -\alpha) \cr  \sigma_a =\id \end{array}\right\}$ \ and \ 
 $\left\{\begin{array}{ll} \alpha_b= (\beta_1, \beta_2) \cr  \sigma_b \mbox{ is the transposition } (1,2) \end{array}\right\}$. \hfill\break
(2) $a,b \in G_4$, \ \ $\left\{\begin{array}{ll} \alpha_a= (0,\alpha, -\alpha, 0) \cr  \sigma_a =(2,4)\circ(1,3)  \end{array}\right\}$ \ and \ 
 $\left\{\begin{array}{ll} \alpha_b= (\beta,0,-\beta,0) \cr  \sigma_b=(2,3)\circ(1,4)  \end{array}\right\}$.
\end{exem}

\begin{rema}\label{cons} 

A straightforward consequence of this definition is that $G_n$ is a group and composing two elements $f$ and $g$ of $G_n$, we get 
$\left\{\begin{array}{ll} \alpha_{f\circ g} = \sigma_{g} (\alpha_{f}) +\alpha_{g} \cr \sigma_{f\circ g}= \sigma_{f}\circ \sigma_{g}\end{array}\right\}$, where $\sigma_g$ acts on the vector $\alpha_f$ by permuting its coordinates.

Therefore, the map $(\alpha,\sigma ): G_n \rightarrow ({\mathbb S}_n)^n \rtimes\mathcal S_n$ is an isomorphism. In particular, the group $G_n$ is virtually abelian and {by Proposition 1.2 of \cite{DFG}}, any finitely generated virtually abelian group is isomorphic to a subgroup of some $G_n$.
\end{rema}

In Proposition \ref{freeDFG1.2}, we will specify the  construction of \cite{DFG} in order to provide free actions and conclude that the finitely generated groups that act freely as elements of $\iet$ are exactly the virtually abelian ones and they are isomorphic to subgroups of the $G_n$. 

Besides this last fact, we note that certain algebraic or dynamical properties of the subgroups of $G_n$ are particularly easy to understand and then these groups provide toy models for $\mathcal{G}$. In a ``forthcoming work", we study reversible elements of $\mathcal{G}$  beginning by a description and classification of reversible elements of $G_n$ (see \cite{GLrev} for a preliminary version).

Noticing that H\"older's Theorem can be precised: ``free actions on the circle are topologically semiconjugate to {actions of rotations groups}" (see e.g. comments following Theorem 2.2.32 in \cite{NaB}), it makes sense to look at a dynamical classification of free actions by elements of $\iet$. We will do this classification up to conjugacy by maps that are not required to be in $\iet$, actually our conjugating maps are given by

\begin{defi} \label{defPLIET} An element $f\in \iet$ is $\boldsymbol{\PL\circ \iet}$\textbf{-conjugate to } $\boldsymbol{F\in G_n}$, if $f$ is conjugate to $F$ through a map $\pl \circ E$, where  $E\in \iet$  and $\pl$ is a PL-homeomorphism such that $P^{-1}$ is affine on $[\frac{k-1}{n},\frac{k}{n})$ for any $k \in \{1,...,n\}$.
\end{defi}

Here, we will prove

\begin{theo} \label{theo1} The image in $\mathcal G$ of any free action of a finitely generated group containing a copy of $\mathbb Z^2$ is $\PL \circ \iet$-conjugate to a subgroup of some $G_n$.
\end{theo}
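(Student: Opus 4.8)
The plan is to reduce the statement to an abelian problem and then to analyse the action one minimal component at a time. First I would invoke Remark~\ref{R1Gro}: the acting group, hence its image $H\subset\mathcal G$, is finitely generated virtually abelian, acts freely and contains a copy of $\mathbb Z^{2}$. I pick a finite index \emph{normal} abelian subgroup $A\trianglelefteq H$; intersecting $A$ with the given $\mathbb Z^{2}$ shows that $A$ still contains a rank two free abelian subgroup, and inside it I fix an element $f$ of infinite order. All powers of $f$ are nontrivial of infinite order, so by freeness $f$ and its powers have no periodic point.

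Next I would build a \emph{canonical} decomposition. Since $f$ has no periodic point, $[0,1)$ splits into finitely many $f$-minimal components; refining this repeatedly with the powers of $f$ and the finitely many generators of $A$ (each refinement strictly lowering the number of intervals involved, hence terminating) one gets a finite partition $[0,1)=C_{1}\sqcup\cdots\sqcup C_{m}$ into finite unions of intervals that is permuted by the centraliser of $A$ in $\mathcal G$ --- hence by $H$, as $A$ is normal --- and such that on each $C_{i}$ some element $\varphi_{i}\in A$ acts minimally while no power of $\varphi_{i}$ decomposes $C_{i}$ further. Writing $H_{i}$ for the finite index stabiliser of $C_{i}$ and $A_{i}=A\cap H_{i}$, the group $A_{i}$ is finite index in $A$, still contains a $\mathbb Z^{2}$, and acts freely on $C_{i}$ with $\varphi_{i}$ minimal.

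The core of the argument is then the following claim about a single component $C=C_{i}$, rescaled affinely to total length $1$: \emph{a finitely generated abelian group $A'$ of IETs of $C$ that acts freely, contains $\mathbb Z^{2}$, and has an element $\varphi$ acting minimally with no power decomposing $C$, is conjugate through an element of $\mathcal G$ to a group of rotations of $C$.} I would prove it by first showing $\varphi$ is uniquely ergodic: otherwise $A'$ permutes its finitely many ergodic invariant measures (Katok--Veech finiteness), a finite index subgroup fixes each of them, and, using the structure theory of minimal IETs (equivalently of their suspension translation surfaces) or an induction on the number of intervals via Rauzy--Veech induction, one finds that the centraliser of $\varphi$ in $\mathcal G$ is too small to contain a free $\mathbb Z^{2}$ unless some power of $\varphi$ decomposes $C$, against the hypothesis. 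Granting unique ergodicity, the cumulative distribution of the unique invariant probability $\mu$ (non-atomic, fully supported) is realised by an element $\Psi\in\mathcal G$, $\Psi\colon C\to[0,1)$, the finitely many pieces of $\Psi$ coming from the finite combinatorics of $\varphi$ together with the rigidity of measure-preserving conjugacies between minimal IETs. Since every element of $A'$ fixes $\mu$, $\Psi$ conjugates $A'$ into the Lebesgue-preserving piecewise isometric bijections of $[0,1)$; one checks that $\Psi\varphi\Psi^{-1}$ is then a genuine rotation (the ``no power decomposes'' hypothesis excluding the $G_{n}$-with-$n\ge 2$ phenomenon, i.e.\ the minimal-but-not-a-rotation elements such as the generator $g$ of the examples), and then any element commuting with this minimal rotation and preserving Lebesgue is itself a rotation. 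Allowing in addition the finite quotient $H_{i}/A_{i}$, which permutes the sub-components carrying the rotation actions, the same reasoning shows $H_{i}|_{C_{i}}$ is $\mathcal G$-conjugate to a subgroup of a copy of $G_{n_{i}}$ supported on $C_{i}$.

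It then remains to assemble. Let $\Psi_{i}$ conjugate $H_{i}|_{C_{i}}$ as above, let $E\in\mathcal G$ apply $\Psi_{i}$ on each $C_{i}$ and place the resulting intervals consecutively in $[0,1)$, and let $n$ be the total number of sub-blocks so produced. Then $EHE^{-1}$ preserves the block decomposition, permutes blocks only inside $H$-orbits --- which consist of blocks of a common length, the connecting maps being isometries --- and acts on each block by a rotation. Finally I take $P$ to be the $\PL$-homeomorphism with $P^{-1}$ affine on each $[\tfrac{k-1}{n},\tfrac{k}{n})$ rescaling the $k$-th sub-block to length $\tfrac1n$: rotations are carried to rotations by affine rescalings, and an isometry between two sub-blocks of equal length becomes a permutation of the corresponding $I_{k}$'s with a single interior discontinuity, so $P\,E\,H\,E^{-1}P^{-1}\subset G_{n}$, with $P\circ E$ exactly of the form required by Definition~\ref{defPLIET}. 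The main obstacle is the dichotomy hidden in the third paragraph: forcing a minimal IET that sits in a free abelian IET group containing $\mathbb Z^{2}$, and none of whose powers breaks its component, to be $\mathcal G$-conjugate to an actual rotation --- this is what rules out the higher-genus and the non-uniquely-ergodic minimal IETs, and it seems to require either a classification of IETs with ``large'' centralisers or a delicate induction on complexity combined with the finiteness of the set of ergodic invariant measures.
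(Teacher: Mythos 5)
There is a genuine gap, and you flag it yourself in your last sentence: the entire difficulty of the theorem is concentrated in your third-paragraph claim that a minimal IET $\varphi$ sitting in a free abelian subgroup containing $\mathbb Z^2$, none of whose powers decomposes its component, must be $\mathcal G$-conjugate to a rotation --- and you do not prove it. The detour through unique ergodicity cannot do the work you assign to it: every IET preserves Lebesgue measure, so if $\varphi$ is uniquely ergodic its unique invariant probability \emph{is} Lebesgue and the conjugating map $\Psi$ given by its distribution function is the identity; you are then left with exactly the original problem of showing that a minimal IET with a free abelian centralizer containing $\mathbb Z^2$ is a rotation, for which ``the centraliser of $\varphi$ is too small'' is asserted, not derived. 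Generic $4$-IETs are minimal, uniquely ergodic, indecomposable and not conjugate to rotations, so some quantitative mechanism must exclude them, and your sketch contains none.

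The paper's mechanism, absent from your proposal, is Novak's dichotomy on the number of discontinuities of iterates: $\#\BP(f^m)$ is either bounded or grows linearly, and boundedness forces some power of $f$ to be conjugate in $\iet$ to a product of restricted rotations with disjoint supports (Theorem \ref{Novak+}). The rate $\mathcal N_x(f)$ is a conjugacy invariant, and for a free $\mathbb Z^2=\langle f,h\rangle$ the orbits $\mathcal O_f(h^k(x))$ are pairwise disjoint, so all but finitely many miss the finite set $\BP(f)$; combining $\mathcal N_{h^k(x)}(f)=0$ with $\mathcal N_{h^k(x)}(h^k\circ f\circ h^{-k})=\mathcal N_x(f)$ and commutativity gives $\mathcal N_x(f)=0$ for every $x$ (Proposition \ref{z2}). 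This is the step that rules out higher-genus minimal IETs. The paper then assembles the $G_n$ picture by centralizer arguments (Lemma \ref{VirtGn}, Proposition \ref{CGn}, Lemma \ref{conjGn}) rather than by your component-by-component refinement, which moreover leaves unproved that the connecting maps between components of equal length are single translations (in the paper this requires the minimality argument of Lemma \ref{conjGn}, showing $\BP(S)$ is a finite invariant set of a minimal map, hence empty). Without an argument of the Novak type, or an equivalent, your proof does not close.
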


Our proof strategy does not strictly require actions to be free, but rather the presence of specific elements that occur when considering sufficiently large free actions. More precisely, the first step is

\begin{prop}\label{z2}
Any free action of $\mathbb Z^2$ in $\iet$ consists of periodic point free maps having an iterate that is conjugate in $\iet$ to a product of restricted rotations with disjoint supports. 
\end{prop}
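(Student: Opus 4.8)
The plan is to exploit the two facts that constrain a single free IET together with a commuting companion: (1) a periodic‑point‑free IET has a well‑understood structure coming from Rauzy–Veech/renormalisation theory — in fact, by a theorem of Novak (or by the minimality‑decomposition theory for IETs), any $f\in\iet$ with no periodic points has an iterate whose restriction to each component of a canonical invariant decomposition is minimal, and a minimal IET is measurably (indeed, after passing to a suitable subinterval, topologically) conjugate to a rotation; (2) the centraliser of a minimal IET in $\iet$ is very small. So first I would show that a free $\Z^2$‑action forces every nontrivial element to be periodic‑point free: if $g\in\Z^2$ had a periodic point $x$ of period $p$, then $g^{p}$ fixes $x$, and since the action is free $g^{p}=\id$, so $g$ is a torsion element of $\Z^2$, hence $g=\id$. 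Thus every nontrivial element of the $\Z^2$ is periodic‑point free, which is the first assertion of the proposition.

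Next I would pick a generator $f$ of (a finite‑index subgroup of) the $\Z^2$ and use the structure theory of periodic‑point‑free IETs: there is an $f$‑invariant partition of $[0,1)$ (up to finitely many points) into finitely many half‑open subintervals $J_1,\dots,J_k$, each of which is a union of orbits, such that a suitable power $f^{N}$ maps each $J_\ell$ to itself and acts minimally on it. Passing to $f^{N}$ (this is where "an iterate" enters), on each $J_\ell$ the map $f^{N}|_{J_\ell}$ is a minimal IET. The key classical input is that a minimal IET on an interval is topologically conjugate, via an IET‑type map built from its first‑return maps (a Rauzy induction tower), to a restricted rotation on that interval — more precisely, minimality lets one find an invariant "rotation coordinate," and because all the maps involved are piecewise translations, the conjugacy can be taken to be an element of $\iet$ restricted to $J_\ell$. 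Assembling these over the finitely many pieces $J_\ell$, which are disjoint by construction, yields a single $E\in\iet$ conjugating $f^{N}$ to a product of restricted rotations $\prod_\ell R_{\alpha_\ell, J_\ell}$ with disjoint supports.

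The main obstacle — and the step I would spend the most care on — is producing the conjugacy to a rotation \emph{inside} $\iet$ rather than merely a topological or measurable one. A minimal IET need not itself be an IET‑conjugate of a rotation; the honest statement is that the \emph{first return map} to a well‑chosen subinterval is a rotation, and one must climb back up the Rauzy tower by a piecewise‑translation map to get a global conjugacy, controlling that only finitely many breakpoints are created and that the resulting map is a genuine bijection of $[0,1)$ onto itself. Here the commuting partner from $\Z^2$ can be used (if needed) to pin down the combinatorics: the centraliser constraint forces the invariant decomposition of the two generators to be compatible, so one common $E\in\iet$ works and the images are simultaneously products of restricted rotations on a common system of disjoint supports. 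I would also need to check the bookkeeping at the finitely many "bad" points where the decomposition fails, absorbing them into the interval endpoints so that everything stays within the half‑open‑interval formalism of Definition 1.1. Once $f^{N}$ is handled, the statement for the whole $\Z^2$ follows since the other generators lie in the centraliser of a product of restricted rotations, which preserves the same supports.
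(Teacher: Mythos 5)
Your first step (freeness forces every nontrivial element to be periodic--point free, since a periodic point of period $p$ for $g$ gives a fixed point of $g^p$, hence $g^p=\id$, hence $g=\id$ because $\Z^2$ is torsion free) is correct and is indeed how the first assertion is obtained. The rest of the argument, however, rests on a ``key classical input'' that is false: a minimal IET is \emph{not} in general conjugate in $\iet$ (nor even topologically or measurably) to a rotation, and its first return map to a subinterval is in general another IET with the same number of exchanged intervals, not a rotation. Typical minimal $4$-IETs are weakly mixing, and self-similar IETs coming from pseudo-Anosov maps are totally minimal with no iterate conjugate in $\iet$ to a product of restricted rotations --- the existence of such maps is precisely what makes Corollary \ref{coro2} of the paper nonvacuous. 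So the Arnoux--Keane--Mayer decomposition into minimal components, by itself, cannot yield the conclusion; climbing a Rauzy tower does not repair this, because the obstruction is dynamical (unbounded growth of $\#\BP(f^n)$), not bookkeeping at endpoints.

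The missing idea is that the commuting partner $h$ is the essential ingredient, not an optional device to ``pin down the combinatorics.'' What must be proved is that $f$ is of \emph{bounded break point type} in the sense of Theorem \ref{Novak+}, i.e. that the growth rate ${\mathcal N}_x(f)$ of $\#\bigl(\BP(f^n)\cap{\mathcal O}_x(f)\bigr)$ vanishes for every $x$; Novak's dichotomy then gives an iterate conjugate in $\iet$ to a product of restricted rotations with disjoint supports. The paper gets this as follows: freeness makes the $f$-orbits of the points $h^n(x)$ pairwise disjoint, so since $\BP(f)$ is finite, for all large $n$ the orbit ${\mathcal O}_f(h^n(x))$ misses $\BP(f)$ and hence ${\mathcal N}_{h^n(x)}(f)=0$; on the other hand the conjugation invariance ${\mathcal N}_{h^n(x)}(h^n f h^{-n})={\mathcal N}_x(f)$ together with $h^n f h^{-n}=f$ forces ${\mathcal N}_x(f)={\mathcal N}_{h^n(x)}(f)=0$. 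Your proposal never performs a step of this kind, so as written it would ``prove'' the (false) statement that every periodic--point free IET has an iterate conjugate to a product of restricted rotations.
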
 
Note that this proposition implies that the image of any free action of $\mathbb Z^2$ in $\iet$  always contains maps which are conjugate in $\iet$ to products of restricted rotations with disjoint supports and without periodic points. And the second step is 
\begin{theo}\label{theo2}
Let $\G$ be a virtually abelian subgroup of $\iet$. If $\G$ contains an element that is conjugate in $\iet$ to a product of restricted rotations with disjoint supports and without periodic points, then $\G$ is $\PL \circ \iet$-conjugate to a subgroup of some $G_n$.
\end{theo}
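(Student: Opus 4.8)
The plan is to leverage the hypothesis to reduce the structure of $\G$ to that of a $G_n$ by constructing, step by step, the right conjugating map $P\circ E$. Let $f\in\G$ be the given element: after conjugating by some $E_0\in\iet$ we may assume $f$ is literally a product $\prod_j R_{\alpha_j,I_j}$ of restricted rotations on disjoint half-open intervals $I_j$ covering $[0,1)$, each with irrational rotation number (no periodic points). First I would analyze the centralizer of such an $f$ inside $\iet$: since $f$ acts on each $I_j$ as a minimal rotation, any $g$ commuting with $f$ must permute the $I_j$'s of equal length (more precisely, it maps a minimal component of $f$ to another one with the same rotation number up to sign, matching lengths), and on each component it is forced to be itself a rotation in the affine chart. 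This is the standard rigidity of centralizers of minimal rotations, and it is where the restricted-rotation/no-periodic-point hypothesis does the real work.

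Next, since $\G$ is virtually abelian, it has a finite-index abelian normal subgroup $A$, and $f^k\in A$ for some $k\ge 1$; note $f^k$ is still a product of minimal restricted rotations on the same intervals. Every element of $A$ commutes with $f^k$, so by the centralizer analysis each $a\in A$ permutes the components $I_j$ and restricts to a rotation on each in the natural affine chart. I would then show that the partition of $[0,1)$ into the $I_j$'s can be refined to a partition into $N$ intervals of \emph{equal} length on which \emph{every} element of $A$ acts by (affinely conjugated) rotations permuting the pieces: the subdivision points needed are generated by finitely many elements, finitely many lengths occur, and one takes a common refinement into equal-length subintervals. Pushing this refinement forward by the orientation-preserving affine identification of each piece with $[0,\frac1N)$ gives a PL-homeomorphism $P$ of the required form (with $P^{-1}$ affine on each $[\frac{i-1}{N},\frac iN)$) such that $PAP^{-1}\subseteq G_N$. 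For a general $g\in\G$ (not in $A$), conjugation by $g$ preserves $A$, hence preserves the canonical "minimal pieces" structure attached to $A$, so $g$ also permutes the refined equal-length intervals and restricts to an affine-rotation on each; thus $PgP^{-1}\in G_N$ as well, giving $P\G P^{-1}\subseteq G_N$, and absorbing $E_0$ we get that $\G$ is $\PL\circ\iet$-conjugate to a subgroup of $G_N$.

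The main obstacle I anticipate is the bookkeeping in the refinement step: one must ensure a \emph{single} $N$ works for \emph{all} of $\G$ (not just $A$), that the lengths of all relevant pieces are commensurable so that an equal-length common refinement exists, and that the discontinuities of every $g\in\G$ land on the refinement points so that each $g_{|I_i}$ genuinely has at most one interior discontinuity (the defining property of $G_N$). Commensurability should follow because all pieces arise from the $\iet$-combinatorics of a fixed finite generating set together with the components of $f$, all living in the finitely generated subgroup of $\R/\Z$ spanned by the breakpoints; finiteness of the number of distinct lengths up to the group action, plus minimality on each $f$-component forcing the $g$-images to respect lengths, should pin $N$ down. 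A secondary technical point is handling the sign ambiguity (an element may send a component to another via an orientation-reversing affine map conjugating a rotation to its inverse); this is harmless since $R_\alpha$ and $R_{-\alpha}$ are both in $\ietd$, but it must be tracked so that the final map lands in $G_N\simeq(\ietd)^N\rtimes\mathcal S_N$ and not some larger group.
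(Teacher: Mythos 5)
Your overall architecture (reduce to a literal product of minimal restricted rotations, analyze the centralizer, build a PL conjugacy, then treat general elements of $\G$) matches the paper's, but two steps as written would fail. The first is the ``common refinement into equal-length subintervals.'' The lengths of the minimal components $I_j$ of $f$ are arbitrary positive reals summing to $1$ (e.g.\ $\sqrt2-1$ and $2-\sqrt2$), so they are generically incommensurable and \emph{no} partition of $[0,1)$ into equal-length intervals refines $\{I_j\}$; your claim that commensurability ``should follow'' from the breakpoints generating a finitely generated subgroup of $\R/\Z$ is false. This is precisely why the conclusion of the theorem is ``$\PL\circ\iet$-conjugate'' and not ``$\iet$-conjugate'': the paper performs \emph{no} refinement at all, but takes $n$ equal to the number of $f$-components and lets the PL-homeomorphism $P$ carry each component $J_i$, whatever its length, affinely onto $[\frac{i-1}{n},\frac{i}{n})$. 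The conjugate $P\circ g\circ P^{-1}$ is then still an honest IET because $g$ permutes components of \emph{equal} length, so the two slopes $\frac{|I_i|}{|J_i|}$ and $\frac{|J_k|}{|I_k|}$ cancel. (Your related worry that the discontinuities of every $g$ must land on partition points is also a red herring: elements of $G_n$ are allowed one interior discontinuity per piece.)

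The second gap is the treatment of $g\in\G\setminus A$. From ``$g$ preserves the structure attached to $A$'' you conclude that $g$ permutes the pieces \emph{and} restricts to a rotation on each; only the first half follows. A priori $g$ could map one component to another by a complicated IET conjugating one irrational rotation to a different one. Closing this requires a separate rigidity statement (the paper's Lemma on conjugators: if $T\in G_n$ is periodic point free and $STS^{-1}\in G_n$, then $S\in G_n$), whose proof uses that $\BP(S)=T^{-1}(\BP(S))$ is a finite set invariant under a minimal rotation, hence empty. You invoke the analogous rigidity only for elements \emph{commuting} with $f^k$, where the two rotations coincide; for general $g$ the paper first passes to $\phi=g f g^{-1}$, uses virtual abelianity to show a power of $\phi$ centralizes a power of $f$ (hence lands in $G_n$ after conjugation), and only then applies the conjugator lemma to conclude $PgP^{-1}\in G_n$. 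Without these two repairs the proposal does not go through.
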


Some extra dynamical assumptions provide stronger rigidity results:

\begin{defi} \ 

$\pbul$ An IET $f$ is {\bf totally minimal} if for all $p\in \Z^*$, the map $f^p$ is minimal. 

More generally: 

$\pbul$ An action by elements of $\iet$ of a group $\G$ is \textbf{totally minimal} if any finite index subgroup of $\G$ acts minimally on $[0,1)$.
\end{defi}

\begin{rem} According to Keane \cite{Ke}, an IET whose maximal continuity intervals have rationally independent lengths is totally minimal. 
\end{rem}

\begin{coro}\label{coro1}
Let $\G$ be virtually abelian group containing a copy of $\mathbb Z^2$. Then any faithful totally minimal action of $\G$ in $\iet$ is conjugate in $\iet$ to an action by rotations.
\end{coro}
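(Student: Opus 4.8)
The plan is to reduce the statement to Proposition~\ref{z2} and Theorem~\ref{theo2}, and then to use total minimality to force the ambient $G_n$ to be $G_1=\ietd$, the group of rotations. The two pieces of work are: producing inside $\G$ an element to which Theorem~\ref{theo2} can be applied (this needs a freely acting copy of $\mathbb Z^2$), and then showing that total minimality collapses the resulting $G_n$ to $G_1$ and that the conjugacy can be taken in $\iet$.

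For the first piece I would prove the elementary lemma that \emph{an abelian subgroup $A$ of $\iet$ acting minimally on $[0,1)$ acts freely}: if $f\in A\setminus\{\id\}$ had a fixed point, then $\fix(f)$ has nonempty interior (a fixed point of an IET forces one of its continuity intervals to be translated by $0$), while both $\fix(f)$ and $[0,1)\setminus\fix(f)$ are nonempty and setwise $A$-invariant (each $a\in A$ commutes with $f$, so preserves $\fix(f)$); picking $x\in[0,1)\setminus\fix(f)$, its $A$-orbit is dense, hence meets the nonempty open set $\mathrm{int}(\fix(f))$, contradicting $A$-invariance of the complement. Now, since $\G$ is virtually abelian, fix a finite-index abelian subgroup $A\le\G$. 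Total minimality gives that $A$ acts minimally, so by the lemma $A$ acts freely, and since the action is faithful the finite-index subgroup $Z:=(\mathbb Z^2)\cap A\cong\mathbb Z^2$ of the given copy of $\mathbb Z^2$ acts freely too. By Proposition~\ref{z2} and the remark following it, the image of $Z$, hence of $\G$, contains a map conjugate in $\iet$ to a product of restricted rotations with disjoint supports and without periodic points; so Theorem~\ref{theo2} applies and yields $\phi=P\circ E$ with $E\in\iet$ and $P$ a PL-homeomorphism with $P^{-1}$ affine on each $[\tfrac{k-1}{n},\tfrac{k}{n})$, such that $\phi\G\phi^{-1}\le G_n$.

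It remains to pin down $n=1$ and that $\phi\in\iet$. Conjugating by an element of $\iet$ sends dense subsets of $[0,1)$ to dense subsets (an IET is a bijection, continuous off a finite set, and $E^{-1}$ of a nonempty open set has positive measure, hence nonempty interior), and conjugating by a PL-homeomorphism is an ordinary topological conjugacy; so $\PL\circ\iet$-conjugacy preserves minimality of actions and, respecting the lattice of finite-index subgroups, total minimality. Hence $\phi\G\phi^{-1}\le G_n$ is totally minimal. But $\ker(G_n\to\mathcal S_n,\ g\mapsto\sigma_g)$ is the finite-index subgroup of $G_n$ consisting of the IETs preserving every $I_i=[\tfrac{i-1}{n},\tfrac{i}{n})$, so $\phi\G\phi^{-1}\cap\ker$ has finite index in $\phi\G\phi^{-1}$ and, by total minimality, acts minimally — impossible for $n\ge2$, since it preserves the partition $\{I_1,\dots,I_n\}$ and thus has no dense orbit. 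Therefore $n=1$, and $G_1=\ietd$ is exactly the group of rotations; moreover for $n=1$ the condition on $P$ in Definition~\ref{defPLIET} becomes ``$P^{-1}$ affine on $[0,1)$'', i.e. $P^{-1}$ is an orientation-preserving affine bijection of $[0,1)$, forcing $P=\id$. Hence $\phi=E\in\iet$ and $E\G E^{-1}\le\ietd$, which is the desired conclusion. The only step requiring genuine care is the first one, namely extracting freeness of the $\mathbb Z^2$ from the hypotheses — this is precisely where total minimality (rather than mere minimality or faithfulness) is used, since one must descend to a finite-index abelian subgroup and keep minimality — together with the bookkeeping that $\PL\circ\iet$-conjugacy transports total minimality; the rest is a formal application of Proposition~\ref{z2} and Theorem~\ref{theo2}.
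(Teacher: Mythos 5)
Your proposal is correct and follows essentially the same route as the paper: use total minimality plus commutativity (invariance of fixed-point sets) to extract a freely acting $\mathbb Z^2$ inside a finite-index abelian subgroup, apply Proposition~\ref{z2} and Theorem~\ref{theo2}, check that $\PL\circ\iet$-conjugacy transports total minimality, and observe that the finite-index subgroup $\{g\in G_n:\sigma_g=\id\}$ forces $n=1$ and hence $P=\id$. The only cosmetic differences are that you argue via the nonempty interior of $\fix(f)$ rather than the density of $\fix(h)$, and that you spell out why conjugation by an IET preserves density of orbits, which the paper only sketches.
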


By Schreier Lemma and the classification of finitely generated abelian groups, any finitely generated virtually abelian group is either virtually cyclic or it contains a copy of $\mathbb Z^2$. Combining this with the result of \cite{DFG} involved in Remark \ref{R1Gro}, we deduce

\begin{coro} \label{coro2}
Let $\G$ be a finitely generated subgroup of $\iet$ that contains a totally minimal IET non conjugate to a rotation then $\G$ is either virtually cyclic or not virtually nilpotent.
\end{coro}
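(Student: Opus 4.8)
The plan is to combine the dichotomy for finitely generated virtually abelian groups with Corollary \ref{coro1} and Gromov's theorem. So suppose $\G$ is a finitely generated subgroup of $\iet$ containing a totally minimal IET $f$ that is not conjugate to a rotation, and suppose toward a contradiction that $\G$ is virtually nilpotent and not virtually cyclic. First I would invoke Remark \ref{R1Gro}: a finitely generated virtually nilpotent subgroup of $\iet$ is polycyclic, hence (by Theorem 2 of \cite{DFG}, as already used there) virtually abelian. Next, by Schreier's lemma a finite-index subgroup of a finitely generated group is finitely generated, and by the structure theorem for finitely generated abelian groups together with the assumption that $\G$ is not virtually cyclic, $\G$ must contain a copy of $\mathbb Z^2$. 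Thus $\G$ satisfies the hypotheses of Corollary \ref{coro1}.

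It then remains to produce a \emph{totally minimal} action to feed into Corollary \ref{coro1}. The element $f$ is totally minimal, but a priori the group action of $\G$ need not be totally minimal in the sense defined above (every finite-index subgroup acting minimally). Here I would argue that minimality of the cyclic group $\langle f^p\rangle$ for all $p\neq 0$ already forces every finite-index subgroup $\G'$ of $\G$ to act minimally: $\G'$ has finite index, so it contains $f^p$ for some $p\neq 0$ (namely $p$ = the order of $f\G'$ in $\G/\G'$, or the index itself), and since $\langle f^p\rangle\subseteq \G'$ acts minimally on $[0,1)$, so does the larger group $\G'$. Hence the $\G$-action is totally minimal, and a faithful action at that, since $\langle f\rangle$ alone is infinite (a totally minimal IET has infinite order) and — if needed for faithfulness of all of $\G$ — one passes to the image of $\G$ in $\iet$, which does not change the hypotheses. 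Applying Corollary \ref{coro1}, the action of $\G$ is conjugate in $\iet$ to an action by rotations; in particular $f$ is conjugate in $\iet$ to a rotation, contradicting the choice of $f$.

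Therefore $\G$ is either virtually cyclic or not virtually nilpotent, which is the assertion. The only genuinely delicate point is the bookkeeping in the second paragraph: making sure that "$\langle f^p\rangle$ minimal for all $p$" really does upgrade to "every finite-index subgroup of $\G$ acts minimally," and that passing to the image in $\iet$ (to guarantee faithfulness) is harmless. Both are elementary — the first because a finite-index subgroup of $\G$ automatically contains a nonzero power of $f$, the second because conjugacy to a rotation action is a statement purely about the image — but they are exactly the steps where the hypotheses "finitely generated" and "totally minimal IET" are used, so they should be spelled out. Everything else is a direct citation of Remark \ref{R1Gro}, Schreier's lemma, and Corollary \ref{coro1}.
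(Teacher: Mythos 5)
Your proposal is correct and follows essentially the same route as the paper: the authors likewise note that total minimality of $f$ forces every finite-index subgroup of $\G$ to contain a nontrivial power of $f$ and hence to act minimally, then argue by contradiction via \cite{DFG} (virtually nilpotent $\Rightarrow$ virtually abelian), extract a copy of $\mathbb Z^2$ from non-virtual-cyclicity, and apply Corollary \ref{coro1} to contradict the choice of $f$. The extra care you take about faithfulness and about which power of $f$ lands in a given finite-index subgroup is harmless bookkeeping that the paper leaves implicit.
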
 

Using Theorem \ref{theo2} and its ingredients of proof, we can exhibit explicit examples of non virtually nilpotent subgroups of $\iet$:
\begin{coro} \label{coro3}
The group $\G$ generated by the following $f$ and $g$ is not virtually nilpotent.
\begin{enumerate}
\item  $f$ is an irrational rotation and  $g$ is not a rotation.
\item  $f\in G_n$ is periodic point free and  $g\notin G_n$. 
\end{enumerate}
\end{coro}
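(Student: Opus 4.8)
Since $G_1=\ietd$ is exactly the group of rotations, an irrational rotation is precisely a periodic point free element of $G_1$, and ``$g$ is not a rotation'' means ``$g\notin G_1$'', item (1) is the case $n=1$ of item (2); so the plan is to prove only (2). Let $f\in G_n$ be periodic point free, let $g\notin G_n$, and suppose for contradiction that $\G=\langle f,g\rangle$ is virtually nilpotent. Being finitely generated, $\G$ has a finite index nilpotent — hence polycyclic — subgroup, which by Theorem 2 of \cite{DFG} (as used in Remark \ref{R1Gro}) is virtually abelian; hence $\G$ itself is virtually abelian, and replacing a finite index abelian subgroup by its normal core I fix a finite index \emph{normal} abelian subgroup $A\trianglelefteq\G$.

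First I would produce a convenient element of $A$. Pick $p\geq 1$ that is a common multiple of $[\G:A]$ and of the order of $\sigma_f$; then $f^p\in A$ and $\sigma_{f^p}=\id$, so $f^p$ preserves each $I_i$ and $f^p=\prod_{i=1}^{n}R_{\beta_i,I_i}$. Since $f$, hence $f^p$, is periodic point free, none of the restricted rotations $R_{\beta_i,I_i}$ has a periodic point, so every $\beta_i$ is irrational; consequently $f^p$ has no periodic point and its minimal components are exactly $I_1,\dots,I_n$.

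The heart of the argument is a rigidity statement of the kind that underlies Theorem \ref{theo2}: if $E\in\iet$ conjugates an IET $\psi$ to an IET $\psi'$ then $E$ maps the minimal components of $\psi$ onto those of $\psi'$; and if $E\in\iet$ conjugates an irrational rotation of $\mathbb S_n$ to an irrational rotation of $\mathbb S_n$, then $E$ is itself a rotation and the two angles coincide. Combining these gives $Z_{\iet}(f^p)\subseteq G_n$: an element commuting with $f^p$ permutes $I_1,\dots,I_n$ and on each $I_i$ conjugates $R_{\beta_i,I_i}$ to some $R_{\beta_{i'},I_{i'}}$, so its restriction to $I_i$ is of the form $R_{\cdot,\cdot}$. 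Since $A$ is abelian and $f^p\in A$, we get $A\subseteq Z_{\iet}(f^p)\subseteq G_n$. As $A$ is normal, $h:=gf^pg^{-1}\in A\subseteq G_n$, and being conjugate to $f^p$ it has exactly $n$ minimal components and no periodic points; but a periodic point free element of $G_n$ is minimal on the union of the intervals of each cycle of its permutation, hence has exactly as many minimal components as that permutation has cycles, so $\sigma_h=\id$ and $h=\prod_{i=1}^{n}R_{\gamma_i,I_i}$ with all $\gamma_i$ irrational. Therefore $g$ carries the minimal components $\{I_1,\dots,I_n\}$ of $f^p$ onto the minimal components $\{I_1,\dots,I_n\}$ of $h$, i.e.\ $g$ permutes the cells of the partition $\bigsqcup_{i}I_i$, and on each $I_i$ it conjugates an irrational rotation to an irrational rotation; by the rigidity above $g_{\vert I_i}$ is, up to the canonical translations, a rotation, i.e.\ $g_{\vert I_i}=R_{\alpha_i,\sigma_g(i)}$. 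Hence $g\in G_n$, contradicting the hypothesis, so $\G$ is not virtually nilpotent.

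The step I expect to be the real obstacle — and exactly the ingredient borrowed from the proof of Theorem \ref{theo2} — is the rigidity claim that an $\iet$-conjugacy $E$ between two irrational rotations of $\mathbb S_n$ is a rotation. I would prove it by noting that, by unique ergodicity, $E$ preserves Lebesgue measure and translates the (dense) orbit of a point by the difference $\delta$ of the two angles; on each interval where $E$ is a single translation this forces $k\delta$ to be constant along the corresponding subsequence of the orbit, which, unless $\delta=0$, makes that interval meet one dense subset of the orbit while missing a complementary dense one — impossible. Hence $\delta=0$, so $E$ commutes with the rotation, and the standard argument that an invariant measurable function of an ergodic rotation is a.e.\ constant shows $E$ is a rotation. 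Granting this, the inclusion $Z_{\iet}(f^p)\subseteq G_n$, the cycle-count of minimal components in $G_n$, and the arithmetic with $p$ and the normal core $A$ are all routine.
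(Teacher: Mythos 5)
Your proof is correct and follows essentially the same route as the paper's: after reducing to the virtually abelian case via \cite{DFG}, you pass to an iterate $f^p$ lying in a finite-index abelian subgroup, show that the centralizer of $f^p$ --- hence $gf^pg^{-1}$ --- lies in $G_n$ (the paper's Lemma \ref{VirtGn}), and then use the rigidity of conjugacies between periodic-point-free elements of $G_n$ (the paper's Lemma \ref{conjGn}) to force $g\in G_n$, a contradiction. The only cosmetic differences are that you take a normal core and reprove the two lemmas inline (via minimal-component counting and Novak-type rotation rigidity) where the paper invokes Claim \ref{power}, Lemma \ref{VirtGn} and Lemma \ref{conjGn} directly.
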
 

For $n=1$, this is also a corollary of Proposition 5.9 of \cite{JMBMS}. In addition, in the last section, we provide examples of finitely generated  non virtually nilpotent subgroups of $\iet$ that have extra properties as being metabelian or non virtually solvable. In particular, these subgroups are generated by rotations and torsion elements 
as the groups considered by Boshernitzan in \cite{Bos}.

\begin{rema} By \cite{DFG}, all the finitely generated non virtually nilpotent subgroups of $\iet$ involved in Corollaries \ref{coro2}, \ref{coro3}, \ref{coro4} and the last section are not virtually polycyclic.
\end{rema}
\smallskip

Another motivation for studying the dynamics of the virtually abelian subgroups of $\iet$ comes from the Baumslag-Solitar group $BS(1,-1)= \langle\ a,b \ \vert \  ba b^{-1}=a^{-1}\ \rangle$ and its connections with reversible maps: O'Farrell and Short (\cite{FS}) have pointed out the importance of reversibility in dynamical systems and group theory and they have raised the following question: given a group $G$, are all reversible elements of $G$ reversible by an involution? In \cite{GLrev}, we study reversible IETs and free actions of $BS(1,-1)$ play a key role for O'Farrell and Short question. In particular, Theorem \ref{theo1} applies to such actions (since the subgroup $\langle\ a,b^2 \ \rangle$ is isomorphic to $\Z^2$ and of index $2$) leading to
\begin{coro} Any free action of $BS(1,-1)$ by elements of $\iet$ is $PL\circ \iet$-conjugate to a free action of $BS(1,-1)$ by elements of some $G_n$.
\end{coro}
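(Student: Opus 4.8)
The plan is to deduce this statement directly from Theorem \ref{theo1}, the only point requiring any work being to exhibit a copy of $\Z^2$ inside $BS(1,-1)$. Write $a,b$ for the standard generators, so that $bab^{-1}=a^{-1}$. Then
\[
b^2 a b^{-2} = b\,(bab^{-1})\,b^{-1} = b\,a^{-1}\,b^{-1} = (bab^{-1})^{-1} = a,
\]
so $a$ and $b^2$ commute; since $b$ acts on $\langle a\rangle$ by inversion and has infinite order modulo $\langle a\rangle$, the subgroup $N=\langle a, b^2\rangle$ is free abelian of rank $2$, and it has index $2$ in $BS(1,-1)$ with transversal $\{1,b\}$. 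In particular $BS(1,-1)$ is a finitely generated group containing a copy of $\Z^2$, so Theorem \ref{theo1} is applicable to any of its free actions.

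Next I would take a free action $\rho\colon BS(1,-1)\to\iet$. Being free it is faithful, so its image $H=\rho(BS(1,-1))$ is a finitely generated subgroup of $\iet$, isomorphic to $BS(1,-1)$, hence containing a copy of $\Z^2$, and it acts freely on $[0,1)$. Theorem \ref{theo1} then provides $n\in\N$, a map $E\in\iet$ and a PL-homeomorphism $P$ with $P^{-1}$ affine on each $[\frac{k-1}{n},\frac{k}{n})$, such that, writing $\psi=P\circ E$, one has $\psi H\psi^{-1}\subseteq G_n$.

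Finally I would check that conjugation by $\psi$ turns $\rho$ into a free action of $BS(1,-1)$ by elements of $G_n$. The map $\rho'\colon\gamma\mapsto\psi\,\rho(\gamma)\,\psi^{-1}$ is a homomorphism because $h\mapsto\psi h\psi^{-1}$ is an isomorphism of $H$ onto $\psi H\psi^{-1}\subseteq G_n$; thus $\rho'$ is an action of $BS(1,-1)$ by elements of $G_n$, and it is $\PL\circ\iet$-conjugate to $\rho$ via $\psi=P\circ E$. It is free: if $\rho'(\gamma)$ fixes some $y\in[0,1)$, then $\rho(\gamma)$ fixes $\psi^{-1}(y)$, so $\gamma=1$ since $\rho$ is free. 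I do not expect a genuine obstacle here: the corollary is a formal transfer through Theorem \ref{theo1}, and the only step carrying real content is the algebraic observation of the first paragraph that $\langle a,b^2\rangle\cong\Z^2$ sits with finite index in $BS(1,-1)$.
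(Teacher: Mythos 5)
Your proposal is correct and follows exactly the paper's route: the paper also observes that $\langle a,b^2\rangle\cong\Z^2$ has index $2$ in $BS(1,-1)$ and then invokes Theorem \ref{theo1}. Your extra verifications (the computation $b^2ab^{-2}=a$ and the check that freeness is preserved under the conjugacy) are correct and simply make explicit what the paper leaves to the reader.
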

 
\smallskip

\noindent \textbf{Acknowledgments.} We acknowledge support from the MathAmSud Project GDG 18-MATH-08, the Labex CEMPI (ANR-11-LABX-0007-01), ANR Gromeov, the University of Lille (BQR) and the I.F.U.M.I.


\section{Preliminaries.}

\subsection{Algebraic classification of finitely generated groups acting freely by elements of $\iet$}

\begin{prop} \label {freeDFG1.2} 
Every finitely generated virtually abelian group acts freely as a subgroup of some $G_n$. 
\end{prop}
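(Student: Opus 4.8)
The plan is to revisit the monomial (induced) representation that underlies Proposition~1.2 of \cite{DFG} and to choose its ingredients so that the resulting embedding of $\Gamma$ into some $G_n$ is by a \emph{free} action. First I would reduce to a good finite-index subgroup. Since $\Gamma$ is finitely generated and virtually abelian, it has a finite-index abelian subgroup, which is finitely generated by Schreier's lemma, hence of the form $\Z^d\times T$ with $T$ finite. The subgroup $\Z^d\times\{e\}$ still has finite index in $\Gamma$, so its normal core $A$ (the kernel of the permutation action of $\Gamma$ on the cosets of $\Z^d\times\{e\}$) is a finite-index \emph{normal} subgroup of $\Gamma$, contained in $\Z^d\times\{e\}$ and therefore isomorphic to some $\Z^{d'}$; the case $A=\{e\}$ occurs exactly when $\Gamma$ is finite and is not excluded below. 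Put $n=[\Gamma:A]$ and $Q=\Gamma/A$, a group of order $n$.

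Next I would build the homomorphism. Fix an injective homomorphism $\phi\colon A\to\R/\Z$; this exists because $\Z^{d'}$ embeds in $\R/\Z$ by sending a basis to reals $\alpha_1,\dots,\alpha_{d'}$ with $1,\alpha_1,\dots,\alpha_{d'}$ linearly independent over $\Q$ (and the statement is vacuous when $d'=0$). Choose a transversal $(t_q)_{q\in Q}$ of $A$ in $\Gamma$ with $t_{e}=e$, and for $\gamma\in\Gamma$, $q\in Q$ write $\gamma t_q=t_{\bar\gamma q}\,c(\gamma,q)$ with $c(\gamma,q)\in A$ (using that $A$ is normal), where $\bar\gamma=\gamma A\in Q$. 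Writing $[0,1)=\bigsqcup_{q\in Q}I_q$ and identifying $\R/\Z$ with $\mathbb S_n$ by division by $n$, the monomial construction assigns to $\gamma$ the element $\rho(\gamma)\in G_n$ with $\sigma_{\rho(\gamma)}$ equal to left translation by $\bar\gamma$ on $Q$ and $(\alpha_{\rho(\gamma)})_q=\tfrac1n\phi(c(\gamma,q))$. This is exactly the embedding of \cite{DFG}; that $\rho$ is a homomorphism into $G_n$ follows from the cocycle identity $c(\gamma_1\gamma_2,q)=c(\gamma_1,\bar\gamma_2q)\,c(\gamma_2,q)$ together with the composition rule of Remark \ref{cons}.

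Then I would check freeness, which is where injectivity of $\phi$ enters. Let $\gamma\in\Gamma$, $\gamma\neq e$. If $\gamma\notin A$, then $\bar\gamma\neq e$ in the group $Q$, so left translation by $\bar\gamma$ is fixed-point-free on $Q$; hence $\sigma_{\rho(\gamma)}(q)\neq q$ for every $q$, so $\rho(\gamma)$ maps each interval $I_q$ onto a different one and has no fixed point. If $\gamma\in A\setminus\{e\}$, then $\sigma_{\rho(\gamma)}=\id$, and on each $I_q$ the map $\rho(\gamma)$ is the rotation by $\tfrac1n\phi(c(\gamma,q))$ with $c(\gamma,q)=t_q^{-1}\gamma t_q\in A\setminus\{e\}$; since $\phi$ is injective, $\phi(c(\gamma,q))\neq 0$, so this rotation is nontrivial and has no fixed point. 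In either case $\rho(\gamma)$ is fixed-point-free, so $\Gamma$ acts freely through $\rho$; in particular $\rho$ is faithful, giving $\Gamma\cong\rho(\Gamma)\le G_n$ acting freely.

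The only delicate point is the choice made in the reduction step: one must not use an arbitrary finite-index abelian subgroup, because a finitely generated abelian group with torsion of rank $\ge 2$ at some prime (e.g.\ $(\Z/2)^2$) admits no injective homomorphism to $\R/\Z$, so no injective $\phi$ would be available on it. Passing first to the torsion-free part and then to a normal core cures this, the torsion of $\Gamma$ being absorbed into the permutation factor $\mathcal S_n$, which already acts freely. Everything else is routine: that $\rho$ is a homomorphism landing in $G_n$ is the content of \cite{DFG}, and the fixed-point analysis of elements of $G_n$ (an interval mapped off itself carries no fixed point, and a rotation has a fixed point only if it is trivial) is elementary.
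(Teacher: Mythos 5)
Your proof is correct and follows essentially the same route as the paper: the paper invokes the Krasner--Kaloujnine embedding of $\Gamma$ into the wreath product $H^F\rtimes F$ (with $H\cong\mathbb Z^m$ normal of finite index, represented by rotations with rationally independent angles) and then represents that wreath product inside $G_{\#F}$, which is exactly your monomial/cocycle construction written out explicitly. Your verification that $c(\gamma,q)=t_q^{-1}\gamma t_q$ is nontrivial for every $q$ when $\gamma\in A\setminus\{e\}$ is in fact a welcome precision over the paper's one-line freeness claim, since the full wreath product $A^F\rtimes F$ does not itself act freely on $[0,1)\times F$ --- only the image of $\Gamma$ does.
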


\begin{proof}

Let $\G$ be a finitely generated virtually abelian group. By the classification of finitely generated abelian groups, there exists $H \lhd \G$ isomorphic to some $\mathbb Z^m$ such that $F=\G/H$ is finite. In addition, $H$ can be represented as a group of circle rotations $A$ seen as $2$-IETs of $[0,1)$. Namely, $A=\langle R_{\alpha_1}, ..., R_{\alpha_m}\rangle$ and $1, \alpha_1, \cdots, \alpha_m$ are $\mathbb Q$ linearly independent.

\medskip

According to Krasner-Kalaynie (see [KK]) $\G$ embeds in $H^F \rtimes F$, where $F$ acts on $H^F$ by permuting coordinates. Thus, it suffices to show that $A^F \rtimes F$ acts freely by elements of some $G_n$.

\medskip

To any $(a_s)_{s\in F} \in A^F$ and $\sigma\in F$, we associate a map  
$$E((a_s),\sigma) : \left\{ \begin{array}{ll}
[0,1) \times F \to [0,1)\times F \cr  
(x,s) \mapsto (a_s (x), \sigma s )  \end{array} \right.$$ 

Noticing that $E((a_s),\sigma) \circ  E((a'_s),\sigma') = E((a_{\sigma' s}\circ a_s'),\sigma \sigma')$, we get that $E$ induces an injective morphism from $A^F \rtimes F$ to the group $\PIET([0,1)\times F):=\{ E((a_s),\sigma), \ (a_s)\in (\iet_2) ^F, \ \sigma\in F \} $.

\medskip

In addition, its image acts freely on $[0,1)\times F$ because $1, \alpha_1, ..., \alpha_m$ are $\mathbb Q$ linearly independent.

Finally, $\PIET([0,1)\times F)$ embeds in $G_{\#F}$ by subdividing $[0,1)$ in $\# F$ intervals of same length. \end{proof}

\subsection{Dynamical properties of IETs.} \label{DynIET} \ 

\smallskip

\begin{defi} \ 

$\pbul$ The \textbf{break point set} of an IET $f$ is obtained by adding the initial point $0$ to the discontinuity set of $f$, it is denoted by $\boldsymbol{\BP(f)}$. The set consisting of the $f$-orbits of points in $\BP(f)$ is denoted by $\boldsymbol{\BP_{\infty}(f)}$. The \textbf{translation set} of $g$ is $\{g(x)-x, \ x\in [0,1)\}$.  

$\pbul$ The \textbf{break point set} of a finitely generated subgroup $G$ of $\iet$ is $\ds\boldsymbol{\BP(G)}:=\bigcup_{g\in G} \BP(g)$ 

\vskip -3mm \noindent and its \textbf{translation set} is $\{g(x)-x, \ g\in G, \ x\in [0,1)\}$.  
\end{defi}

\smallskip

We let the reader check the following
\begin{propri}\label{proBP} Let $f$, $g$ in $\iet$ and $n\in \N$.
\begin{enumerate}[(a)]  
\item $\BP(f\circ g) \subseteq \BP(g)\cup g^{-1}(\BP(f))$,
\item $\BP (f^{-1})=f(\BP(f))$ and
\item $\BP (f^n) \subseteq \BP (f)\cup f^{-1}(\BP(f))\cup ...\cup  f^{-n+1}(\BP(f)).$
\end{enumerate}
\end{propri}

\begin{rema} \label{remBouBP} By these properties, $\#\BP(f^n)$ is bounded above by $\#\BP(f) \times \vert n \vert$. However, it is possible for $\#\BP(f^n)$ to be bounded independently of $n$, for IETs of finite order but also for minimal IETs as irrational rotations and then also for all elements of $G_n$, in this case we say that $f$ is of \textbf{bounded break point type}. 
\end{rema}

\begin{defi} Let $f\in \iet$.

$\pbul$ Let $p\in \N^*$, we denote by ${\Per_p}(f)$ the set of points $x$ such that, $\boldsymbol{\mathcal O_f(x)}$, the $f$-orbit of $x$ has cardinality $p$. The fix point set of $f$ is $\fix (f)={\Per_1}(f)$.

$\pbul$ Let $V$ be a subset of $[0,1)$, we say that $V$ is \textbf{of type $\mathcal M$} if it is a non empty finite union of intervals each of the form $[b,c)$, with $b,c$ in $\BP_{\infty}(f)$. A type $\mathcal M$ and $f$-invariant set that is minimal for the inclusion among non empty, type $\mathcal M$ and $f$-invariant subsets of $[0,1)$ is called an \textbf{$f$-component}.
\end{defi}

\smallskip

The well-known decomposition into minimal and periodic components of an IET was first given for measured surface flows by Mayer in 1943 (\cite{May}) and restated for IETs by Arnoux (\cite{Ar}) and Keane (\cite{Ke}).

\medskip

\textbf{The Arnoux-Keane-Mayer decomposition Theorem} claims that $[0,1)$ can be decomposed as $[0,1)= P_1\cup ...P_l \cup M_1...\cup M_m,$ where

\begin{itemize}
\item $P_i$ is an $f$-periodic component: $P_i$ is the $f$-orbit of an interval $[b,c)$ with $b,c$ in $\BP_{\infty} (f)$ and all iterates $f^k$ of $f$ are continuous on $[b,c)$. In particular points in $P_i$ are periodic of the same period. 
\item $M_j$ is an $f$-minimal component: for any $x\in M_j$, the orbit $\mathcal O_f(x)$ is dense in $M_j$.
\end{itemize}

For convenience, the $f$-component containing $x\in I$ will be denoted $\boldsymbol{M_f(x)}$. 

\medskip

According to Remark \ref{remBouBP}, it makes sense to give the following 

\begin{defi} We define the growth rate of the number of discontinuities for the iterates of an IET $f$ on the $f$-orbit through a given point $x$ by
$$\boldsymbol {{\mathcal N}_x(f)}= \lim \limits_{n \rightarrow + \infty} \frac{\# \{ \BP(f^n) \cap {\mathcal O}_x(f)\}}{n}.$$
\end{defi}

\begin{propri}\label{inva} Let $f$, $g$ in $\iet$ and $x\in [0,1)$. 
\begin{enumerate}
\item If $BP(f) \cap {\mathcal O}_x(f)=\emptyset$ then ${\mathcal N}_x(f)=0$.
\item ${\mathcal N}_{g(x)}(g \circ f \circ g^{-1})={\mathcal N}_x(f).$
\end{enumerate}
\end{propri}

\begin{proof} \ 

Item (1) follows from Item (c) of Properties \ref{proBP}.

\medskip

Item (2). Let $N\in \N$, we write $\boldsymbol{A\nearsubset\limits_N B}$ if $A\subset B \cup F$  where $\#F\leq N$.

Let $C= 2 \# \BP(g)$ and $f_g=g\circ f \circ g^{-1}$, we have ${\mathcal O}_{g(x)}(f_g)=g({\mathcal O}_{x} (f))$.

\medskip

It follows from Item (a) of Properties \ref{proBP} that:

\smallskip

\noindent $\pbul$ $\ds \BP(f_g ^n)\subseteq \BP (g^{-1})\cup g(\BP (f^n))\cup g\circ f^{-n}(\BP (g))\nearsubset\limits_C g(\BP (f^n))$
and

\noindent $\pbul$ $\ds \BP(f^n)=\BP(g^{-1} \circ f_g ^n \circ g) \subseteq \BP(g)\cup g^{-1}(\BP(f_g^n)) \cup g^{-1} \circ f_g^{-n}(\BP(g^{-1}))\nearsubset\limits_C  g^{-1}(\BP(f_g^n))$.

Therefore $\ds  \BP(f_g ^n)\cap {\mathcal O}_{g(x)}(f_g) \nearsubset\limits_C g\bigl(\BP(f^n) \cap g^{-1}({\mathcal O}_{g(x)}(f_g)) \bigr)= g\bigl(\BP (f^n) \cap {\mathcal O}_{x} (f)\bigr)$ 

and $\ds \BP(f^n) \cap {\mathcal O}_{x} (f)\nearsubset\limits_C   g^{-1}(\BP(f_g^n) \cap g({\mathcal O}_{x} (f)) =  g^{-1}(\BP(f_g^n) \cap {\mathcal O}_{g(x)}(f_g)).$ 

Finally, we get $$ \# \left(\BP(f^n) \cap {\mathcal O}_{x}(f) \right)- C \leq  
\# \left( \BP(f_g^n) \cap {\mathcal O}_{g(x)}(f_g)\right) \leq \# \left( \BP(f^n) \cap {\mathcal O}_{x} (f) \right)+ C.$$

We conclude by dividing by $n$ and taking the limit.
\end{proof}

In addition, ${\mathcal N}_x(f)$ provides a characterization of the bounded break point type. More precisely, from \cite{No}, we deduce

\begin{theor}\textrm{Adapted version of Theorems 1.1 and 1.2 of \cite{No}.} \label{Novak+} 

Let $f\in \iet$, the following assertions are equivalent
\begin{enumerate}
\item For any $x\in I$, we have ${\mathcal N}_x(f)=0$.
\item The map $f$ is of bounded break point type.
\item There exists a positive integer $p$ such that $f^p$ is conjugate in $\mathcal G$ to a product of restricted rotations of pairwise disjoint supports.
\item There exist two positive integers $p$ and $n$, an IET $E$ and a PL-homeomorphism $\pl: I \rightarrow I$ such that $\Phi=( \pl \circ E) \circ f^p \circ (\pl \circ E)^{-1} \in G_n$, $\sigma_\Phi=\id$ and the map $\pl ^{-1}$ is affine on each $I_i=[ \frac{i-1}{n}, \frac{i}{n})$. 
\end{enumerate}
\end{theor}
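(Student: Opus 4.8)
The plan is to prove the cycle of implications $(1)\Rightarrow(4)\Rightarrow(3)\Rightarrow(2)\Rightarrow(1)$, invoking the cited results of Nov\'ak \cite{No} for the genuinely hard analytic input. First I would treat the easy arrows. The implication $(4)\Rightarrow(3)$ is nearly tautological: if $\Phi=(\pl\circ E)\circ f^p\circ(\pl\circ E)^{-1}\in G_n$ with $\sigma_\Phi=\id$, then $\Phi$ is exactly a product of restricted rotations $R_{\alpha_i,I_i}$ on the pairwise disjoint intervals $I_i=[\frac{i-1}{n},\frac i n)$, so $f^p$ is conjugate in $\iet$ to such a product via the genuine IET $E$ after first undoing the PL part --- here one uses that conjugating a product of restricted rotations by the PL-homeomorphism $\pl$ (affine on each $I_i$) yields again a product of restricted rotations, so $(\pl\circ E)^{-1}\Phi(\pl\circ E)=f^p$ is $\iet$-conjugate (via $E$) to $\pl^{-1}\Phi\pl$, again a product of restricted rotations with disjoint supports. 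For $(3)\Rightarrow(2)$: a product of restricted rotations with disjoint supports is of bounded break point type (each restricted rotation $R_{\alpha,I}$ is either finite order or conjugate to an irrational rotation, and in both cases $\#\BP(R_{\alpha,I}^k)$ is bounded independently of $k$, cf. Remark~\ref{remBouBP}); and being of bounded break point type is preserved under $\iet$-conjugacy and under passing between $f$ and $f^p$ --- the latter because $\#\BP(f^{pk})\le \#\BP(f^p)\cdot\text{(const)}$ forces boundedness for the whole sequence once it holds along a subsequence, using Properties~\ref{proBP}(c) to interpolate. Finally $(2)\Rightarrow(1)$: if $\#\BP(f^n)$ is bounded by some constant independent of $n$, then a fortiori $\#\{\BP(f^n)\cap\mathcal O_x(f)\}$ is bounded, so dividing by $n$ and letting $n\to\infty$ gives $\mathcal N_x(f)=0$ for every $x$.

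The substantive arrow is $(1)\Rightarrow(4)$, and this is where I would lean directly on \cite{No}. The hypothesis $\mathcal N_x(f)=0$ for all $x$ is precisely a reformulation of the statement that the discontinuities of the iterates $f^n$ do not proliferate along any orbit; by Nov\'ak's Theorems 1.1 and 1.2 this forces the orbit of every break point to be eventually periodic, hence $\BP_\infty(f)$ is finite. With $\BP_\infty(f)$ finite, the Arnoux--Keane--Mayer decomposition $[0,1)=P_1\cup\dots\cup P_l\cup M_1\cup\dots\cup M_m$ has all pieces being finite unions of intervals with endpoints in the finite set $\BP_\infty(f)$; on each minimal component $M_j$ the first-return map to a suitable subinterval is a minimal IET with finitely many break points whose orbits are all eventually periodic, which forces it (by the same Nov\'ak dichotomy, or by a Rauzy-induction argument) to be conjugate in $\iet$ to an irrational rotation, while each periodic component $P_i$ is a cyclic tower of intervals permuted by $f$. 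Taking $p$ to be the least common multiple of the periods of the periodic components together with $1$ (the minimal components contribute no extra period once we are allowed to conjugate), $f^p$ restricted to each component becomes, up to an $\iet$-conjugacy straightening each tower into a single interval, a restricted rotation (irrational or identity) on that interval; assembling these over the disjoint components and rescaling by a PL-homeomorphism $\pl$ that is affine on each piece to make all the pieces have equal length $1/n$ produces $\Phi=(\pl\circ E)\circ f^p\circ(\pl\circ E)^{-1}\in G_n$ with $\sigma_\Phi=\id$, as required.

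I expect the main obstacle to be the passage from ``$\BP_\infty(f)$ finite'' to the concrete normal form in $(4)$: one must argue carefully that on a minimal component the return map really is rotation-like (this is exactly the content of Nov\'ak's structure theorem and should be quoted rather than reproved), and one must keep track of the bookkeeping when conjugating each periodic tower down to a single interval and then simultaneously rescaling all components to common length $1/n$ by a single PL map that is affine on the relevant subintervals --- the point being that the conjugating map naturally splits as $\pl\circ E$ with $E\in\iet$ collapsing towers and reordering, and $\pl$ PL and piecewise-affine doing the rescaling, so that the hypotheses of $(4)$ on the shape of the conjugacy are met. None of this requires new estimates; the analytic heavy lifting is entirely absorbed into the citation of \cite{No}, and the remaining work is the combinatorial assembly of the components and the verification that the conjugacy has the prescribed $\pl\circ E$ form.
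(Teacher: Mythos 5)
Your overall architecture is essentially the paper's: the substantive content is delegated to Nov\'ak's dichotomy (sublinear growth of $\#\BP(f^n)$ forces boundedness, giving $(1)\Rightarrow(2)$) and to his structure theorem (bounded type forces a power conjugate to a product of restricted rotations, giving $(2)\Rightarrow(3)$), and the passage to the normal form $(4)$ is the same explicit construction of a PL-homeomorphism $\pl$ affine on each component, collapsing/rescaling the supports to the intervals $I_i$. Where you genuinely diverge is in closing the cycle: the paper proves $(4)\Rightarrow(1)$ directly, using the conjugation invariance of ${\mathcal N}_x$ (Properties \ref{inva}) together with an averaging of ${\mathcal N}_{f^j(x)}(f^p)$ over $j=0,\dots,p-1$ to pass from $f^p$ to $f$; you instead go $(4)\Rightarrow(3)\Rightarrow(2)\Rightarrow(1)$ by elementary break-point counting ($\#\BP(R_{\alpha,I}^k)$ uniformly bounded, stability of boundedness under conjugation and under interpolating $n=pk+r$ via Properties \ref{proBP}). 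Both are correct; your route avoids the ${\mathcal N}_x$-invariance machinery entirely for that direction, at the (small) cost of having to verify $(3)\Rightarrow(2)$, which the paper never needs.

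One assertion in your $(1)\Rightarrow(4)$ discussion is false and should be excised: bounded break point type does \emph{not} imply that the orbit of every break point is eventually periodic, nor that $\BP_\infty(f)$ is finite. The irrational rotation $R_\alpha$ is already a counterexample: $\#\BP(R_\alpha^n)=2$ for all $n$, yet the $R_\alpha$-orbit of the break point $1-\alpha$ is infinite and dense, so $\BP_\infty(R_\alpha)$ is infinite. The mechanism behind boundedness is \emph{cancellation} of discontinuities along the orbit (the one-sided limits of $f^n$ agree at most points of the orbit of a break point), not finiteness of that orbit; consequently your subsequent reasoning on the minimal components (``a minimal IET with finitely many break points whose orbits are all eventually periodic'') does not make sense as stated, since a minimal IET has no eventually periodic orbits at all. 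This does not sink the proof, because the conclusion you actually need at that stage is precisely assertion $(3)$, which is Nov\'ak's Theorem 1.2 and should simply be quoted (as you yourself suggest); but the intermediate gloss must be removed rather than repaired.
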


\begin{proof} \ 

Since $\displaystyle \BP(f^n)=\bigcup_{a\in \BP(f)} \BP(f^n) \cap {\mathcal O}_{f}(a)$, the implication $(1) \Ra (2)$ is a direct consequence of Theorem 1.1 of \cite{No} stating that either $\#\BP(f^n)$ is bounded or $\#BP(f^n)$ has linear growth.

\smallskip

The implication $(2) \Ra (3)$ is exactly Theorem 1.2 of \cite{No}.

\smallskip

We prove $(3) \Ra (4)$. Let $E$ be an IET that conjugates $f^p$ to $F$, a product of restricted rotations of pairwise disjoint supports. We decompose $I$ as a disjoint union of consecutive half open intervals $J_i$, $i=1,...,n$, where $J_i$ is either the support of a restricted rotation in $F$ or a connected component of $\fix(F)$. We define $\pl$ as the PL-homeomorphism such that $\pl(J_i)=I_i$ and $\pl_{\vert J_i}$ is affine. It is easily seen that $\Phi=\pl\circ F \circ \pl^{-1} \in G_n$ and $\sigma_\Phi=\id$.

\smallskip

It remains to prove $(4) \Ra (1)$. As $\Phi \in G_n$, it holds that ${\mathcal N}_x(\Phi)=0$ for any $x\in I$, then ${\mathcal N}_x(\pl^{-1}\circ\Phi\circ\pl)=0$ since $\pl$ is a homeomorphism. Therefore Item (2) of Properties \ref{inva} implies that ${\mathcal N}_x(f^p)=0$ for any $x\in I$. It follows that ${\mathcal N}_x(f)=0$ for any $x\in I$. Indeed : Let $x\in I$ and $n\in \N^*$, we have $\displaystyle {\mathcal O}_{f}(x) =\bigcup_{j=0}^{p-1} \mathcal O_{f^p}(f^j(x))$ and then 
$$\frac{\# \bigl( \BP(f^{pn}) \cap {\mathcal O}_f(x) \bigr)}{pn} \leq \frac{1}{p}\sum_{j=0}^{p-1} 
\frac{\# \bigl( \BP(f^{pn}) \cap {\mathcal O}_{f^p}(f^j(x))\bigr)}{n}$$
Taking, the limit when $n$ goes to infinity, we get that 
$\displaystyle  {\mathcal N}_x(f)  \leq \frac{1}{p} \sum_{j=0}^{p-1} {\mathcal N}_{f^j(x)}(f^p).$

Finally, by Item (2) of Properties \ref{inva}, it holds that ${\mathcal N}_x(f^p)=  {\mathcal N}_{f^j(x)}(f^p)$ for any integer $j$ and then ${\mathcal N}_x(f) \leq \ {\mathcal N}_x(f^p)=0$.
\end{proof}

\subsection{Centralizers.}

\subsubsection{Generalities} \

Let $\G$ be a group and $f\in \G$. It is well-known that the \textbf{centralizer} of $f$ in $\G$, $\ds \boldsymbol{\CRd(f)}:=\{ h\in \G \ : \ h\circ f  = f\circ  h \}$, is a subgroup of $\G$.

\begin{lemm}\label{DynGen} {Let $f,h$ be bijections of a set $X$ and $h\in \CR(f)$}, then:  
\begin{itemize}
\item[$\pbul$] For any $x\in X$, $h(\mathcal O_f(x)) = \mathcal O_f( h(x))$.
\item[$\pbul$] For any $p\in \mathbb N^*$, the set of $f$-periodic points with exact period $p$, ${\Per}_p(f)$, is $h$-invariant.
\end{itemize}
\end{lemm}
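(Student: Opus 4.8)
The statement has two bullets, and both are elementary consequences of the commutation relation $h\circ f = f\circ h$, hence also $h\circ f^{k}=f^{k}\circ h$ for every $k\in\Z$. I would first establish this iterated version by an immediate induction (and by applying $h$ to $h\circ f = f\circ h$ composed with $f^{-1}$ on both sides for negative powers, using that $f,h$ are bijections). Everything then follows formally.

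For the first bullet, fix $x\in X$. By definition $\mathcal O_f(x)=\{f^{k}(x):k\in\Z\}$ (or $k\in\N$ if one works with forward orbits; the argument is identical). Then
\[
h(\mathcal O_f(x)) = \{h(f^{k}(x)) : k\in\Z\} = \{f^{k}(h(x)) : k\in\Z\} = \mathcal O_f(h(x)),
\]
where the middle equality is exactly $h\circ f^{k}=f^{k}\circ h$. No estimates are needed; this is just rewriting.

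For the second bullet, let $x\in{\Per}_p(f)$, i.e. $\#\mathcal O_f(x)=p$. Since $h$ is a bijection of $X$, it restricts to a bijection from $\mathcal O_f(x)$ onto $h(\mathcal O_f(x))$, so these two sets have the same cardinality; by the first bullet $h(\mathcal O_f(x))=\mathcal O_f(h(x))$, hence $\#\mathcal O_f(h(x))=p$, i.e. $h(x)\in{\Per}_p(f)$. This shows $h({\Per}_p(f))\subseteq{\Per}_p(f)$; applying the same reasoning to $h^{-1}\in\CR(f)$ gives the reverse inclusion, so ${\Per}_p(f)$ is $h$-invariant.

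There is essentially no obstacle here: the only thing to be slightly careful about is that $h$ being merely a bijection (not, say, an IET) is enough — cardinalities of orbits are preserved purely set-theoretically — and that the conclusion is genuine invariance ($h({\Per}_p(f))={\Per}_p(f)$), which is why one invokes $h^{-1}\in\CR(f)$ rather than stopping at one inclusion.
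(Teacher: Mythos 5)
Your proof is correct and follows essentially the same route as the paper: both bullets are derived from the iterated commutation $h\circ f^{k}=f^{k}\circ h$, with the first giving $h(\mathcal O_f(x))=\mathcal O_f(h(x))$ and the second following by cardinality of orbits. Your extra remark about invoking $h^{-1}\in\CR(f)$ to get genuine equality $h({\Per}_p(f))={\Per}_p(f)$ is a small, welcome completion of a point the paper leaves implicit.
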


\begin{proof} \ 

$\pbul$ The first item is a direct consequence of the fact that for any $x\in X$, one has $h(f^{n}(x))=f^{n}(h(x))$.

\smallskip

$\pbul$ Let $p\in \mathbb N^*$, $x\in {\Per}_p(f)$ means that $\# \mathcal O_f(x) = p$. By the first item, we get that $\# \mathcal O_f(h(x)) = p$ and therefore $h(x) \in {\Per}_p(f)$.
\end{proof}

\begin{lemm} \label{DynIETCR} Let $f,h\in \iet$, $h \in \CR(f)$ and $x$ be a non $f$-periodic point. 

Then $h(x)$ is not $f$-periodic and $h(M_f(x))= M_f (h(x))$. In addition, there exists a positive integer $r$ such that $h^r(M_f(x))= M_f(x)$. 
\end{lemm}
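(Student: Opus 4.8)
The plan is to exploit the Arnoux–Keane–Mayer decomposition together with the two facts already recorded in Lemma \ref{DynGen}: that $h$ maps $f$-orbits to $f$-orbits and preserves the exact-period sets $\Per_p(f)$. First I would show $h(x)$ is not $f$-periodic: if it were, it would lie in some $\Per_p(f)$, and then $h^{-1}(h(x))=x$ would lie in $\Per_p(f)$ by Lemma \ref{DynGen} applied to $h^{-1}\in\CR(f)$ — contradiction. Hence $x$ lies in some $f$-minimal component $M_j$ and so does $h(x)$, and I want to identify $M_f(h(x))$ with $h(M_f(x))$.

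Next I would argue that $h$ maps $f$-components to $f$-components. The key observation is that $h\in\iet$ commutes with $f$, so $h$ preserves the structure used to define type-$\mathcal M$ sets: break points of $f$ propagate under $f$, and since $h\circ f = f\circ h$ one checks $h(\BP_\infty(f))$ behaves well — more precisely, I would show that if $V$ is type $\mathcal M$ and $f$-invariant then so is $h(V)$. Indeed $h(V)$ is $f$-invariant because $f(h(V))=h(f(V))=h(V)$; that $h(V)$ is again a finite union of intervals with endpoints in $\BP_\infty(f)$ follows because $h$ is an IET (so $h(V)$ is a finite union of half-open intervals) and its endpoints, being images under $h$ of points of $\BP_\infty(f)$ or break points of $h$, must lie in $\BP_\infty(f)$ once one uses $f$-invariance of $h(V)$ to absorb the finitely many break points of $h$ into $f$-orbits of break points already present. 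Applying the same reasoning to $h^{-1}$, the map $V\mapsto h(V)$ is a bijection on type-$\mathcal M$ $f$-invariant sets that preserves inclusion, hence sends $f$-components to $f$-components. Since $x\in M_f(x)$ gives $h(x)\in h(M_f(x))$, minimality forces $h(M_f(x))=M_f(h(x))$.

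Finally, for the last assertion: there are only finitely many $f$-minimal components, say $M_1,\dots,M_m$. The map $M_j\mapsto h(M_j)$ is a permutation of this finite set by the previous paragraph, so some power $r\ge 1$ fixes $M_f(x)$, i.e. $h^r(M_f(x))=M_f(x)$ (one may take $r$ to be the order of the induced permutation, or just $r=m!$).

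The main obstacle I expect is the middle step — verifying rigorously that $h(V)$ is again type $\mathcal M$, i.e. that the endpoints of the intervals composing $h(V)$ lie in $\BP_\infty(f)$ rather than merely in $h(\BP_\infty(f))\cup\BP(h)$. This is where commutativity of $h$ with $f$ and $f$-invariance of $h(V)$ must be combined carefully: an endpoint of $h(V)$ coming from a break point of $h$ is automatically an endpoint of an $f$-invariant union of intervals, and one argues it must already be the image of (equivalently, lie in the $f$-orbit of) some point of $\BP_\infty(f)$.
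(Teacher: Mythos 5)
Your first and last steps are correct and essentially coincide with the paper's: non-periodicity of $h(x)$ comes straight from Lemma \ref{DynGen}, and the final claim is the same pigeonhole on the finitely many components (the paper finds $s\neq t$ with $h^s(M_f(x))=h^t(M_f(x))$ and takes $r=t-s$). Where you genuinely diverge is the middle step. The paper does not touch the type-$\mathcal M$ combinatorics at all: it uses that $\mathcal O_f(x)$ is dense in $M_f(x)$, that $h(\mathcal O_f(x))=\mathcal O_f(h(x))$ by Lemma \ref{DynGen}, and then ``takes closures'' to get $h(M_f(x))=M_f(h(x))$, with a footnote invoking a Denjoy blow-up to a genuine homeomorphism group precisely because $h$ need not be continuous and closures do not pass through $h$ naively. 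Your combinatorial route, if completed, sidesteps that continuity issue entirely, which is a real advantage; but as written it has a gap exactly where you suspect.

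The gap is that the mechanism you propose for it is not the right one: there is no reason why a break point of $h$, or the image under $h$ of a point of $\BP_{\infty}(f)$, should itself lie in $\BP_{\infty}(f)$, and ``absorbing'' them does not happen. The step can be closed, for the sets you actually need (images of \emph{minimal} components), by a different argument. Let $V=M_f(x)$ with $x$ non-periodic and $W=h(V)$; then $W$ is $f$-invariant and, since $h\in\iet$, a finite union of half-open intervals, and by your first step every point of $W$ is non-$f$-periodic. If some endpoint $b$ of $W$ had $\mathcal O_f(b)\cap\BP(f)=\emptyset$, then every $f^n$ would translate a two-sided neighbourhood of $b$, so $\mathcal O_f(b)$ would consist of endpoints of $W$ and hence be finite; some $f^p$ would then fix $b$ and be the identity near $b$, making the points of $W$ near $b$ periodic, a contradiction. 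Hence all endpoints of $W$ lie in $\BP_{\infty}(f)$, $W$ is type $\mathcal M$ and $f$-invariant, and your inclusion argument ($M_f(h(x))\subseteq h(M_f(x))$, and the symmetric one with $h^{-1}$) gives equality. With that repair your proof is correct and arguably cleaner than the paper's; as submitted, the key step is asserted rather than proved.
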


\begin{proof} As $x$ is not $f$-periodic, the orbit $\mathcal O_f(x)$ is dense in $M_f(x)$. By Lemma \ref{DynGen}, the point $h(x)$ is not $f$-periodic and $h(\mathcal O_f(x))= \mathcal O_f( h(x))$, taking closure we get $h(M_f(x)) = M_f (h(x))$.\footnote{\ The map $h$ may not be continuous. However the dynamics of the group $G$ generated by $f$ and $h$ can be lifted, by a Denjoy blow-up, to that of a homeomorphisms group of $X =[0,1) \sqcup \BP^-_\infty(G)$, where $\BP^-_\infty(G)$ is an abstract copy of $\BP_\infty(G)$. To any $x \in \BP_\infty(G)$ correspond two points of $X$: $x^+ \in [0,1)$ and $x^- \in   \BP^-_\infty(G)$. The topology on $X$ is given by the total order defined on $X$ by taking the usual order on  $\BP^-_\infty(G)$ and $[0,1)$ and setting that for  $x^- \in \BP^-_\infty(G)$ and  $y \in  [0,1)$:  $x^-< y \LRA x^+ \leq  y$. For more details see e.g \cite{Ar} or \cite{Cor} $\S 3.2$.}  In addition, the finiteness of the number of $f$-minimal components implies that there exist two distinct integers $s$ and $t$ such that $h^s(M_f(x))=h^t(M_f(x))$ therefore $h^{t-s}(M_f(x))=M_f(x)$.
\end{proof}

\subsubsection{Centralizer for IETs of bounded break point type.} \ 

We begin by recalling

\begin{propri}\label{conjrot} (\cite{No}, Lemma 5.1 and its proof.)
\begin{enumerate}
\item Two irrational rotations $R_\alpha$ and $R_\beta$, with $\alpha\not=\beta \mod 1$, are not conjugate in $\iet$. 
\item The centralizer in $\iet$ of an irrational rotation is the rotation group $\ietd$.
\end{enumerate}
\end{propri}

This extends to 
\begin{lemm}\label{VirtGn}
Let $F\in G_n$ without periodic points then $\CR(F)\subset G_n$.
\end{lemm}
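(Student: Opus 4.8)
The plan is to analyze how an element $h \in \CR(F)$ must interact with the structure of $F$, exploiting that $F \in G_n$ without periodic points is, by Definition \ref{Gn}, built from restricted rotations on the intervals $I_i$ together with a permutation $\sigma_F \in \mathcal S_n$. First I would note that since $F$ has no periodic points, each $F$-component is a minimal component, and the decomposition into $F$-components refines (or coincides with) the partition by $\sigma_F$-cycles: if $\sigma_F$ has a cycle $(i_1, \dots, i_k)$, then $J = I_{i_1} \cup \dots \cup I_{i_k}$ is $F$-invariant and $F^k$ restricted to $J$, transported to a single interval, is a restricted rotation with irrational angle (it cannot be rational, else $F$ would have periodic points). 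So the $F$-components are exactly these unions $J$ over the cycles of $\sigma_F$, each carrying an irrational-rotation dynamics.

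Next I would invoke Lemma \ref{DynIETCR}: for $h \in \CR(F)$, $h$ permutes the $F$-components, and some power $h^r$ fixes each of them. The key point is then to show that $h$ maps each $F$-component onto an $F$-component \emph{of the same length}, and in a way compatible with the equispaced grid $\{0, \tfrac 1n, \dots, 1\}$; more precisely, I want to show $h$ sends each $I_i$ onto some $I_j$ by a rotation (possibly composed with translation by a multiple of $\tfrac 1n$), which is exactly the assertion $h \in G_n$. To get this, I would restrict attention to a single $F$-component $J$ of "size" $k\tfrac 1n$ and first pass to $h^r$ which stabilizes $J$; conjugating $F^k|_J$ to an irrational rotation $R_\alpha$ of the circle via the natural affine identification, $h^r|_J$ centralizes $R_\alpha$, so by Properties \ref{conjrot}(2) (the centralizer of an irrational rotation in $\iet$ is $\ietd$) the map $h^r|_J$ is itself conjugate to a rotation — in particular continuous on the circle $J$, hence on $J$ it has at most the one interior discontinuity coming from the circle identification, and it preserves the subdivision of $J$ into the $k$ intervals $I_{i_1}, \dots, I_{i_k}$ up to cyclic rotation. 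That forces $h^r|_J$ to permute $\{I_{i_1}, \dots, I_{i_k}\}$ cyclically by restricted rotations, i.e. $h^r|_J \in G_n$ in the obvious sense. Then one upgrades from $h^r$ to $h$: since $h$ already maps $F$-components to $F$-components isometrically (they all have lengths that are multiples of $\tfrac 1n$, and $h$ must match a component to one of the same dynamical type and length), $h$ permutes the intervals $I_1, \dots, I_n$ among themselves, and on each $I_i$ it is an IET with a single interior discontinuity intertwining the rotation structures, which is precisely membership in $G_n$.

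The main obstacle I expect is the last upgrade, and more precisely ruling out that $h$ could carry an $F$-component $J$ to a different $F$-component $J'$ in a way that is an isometry of intervals but \emph{breaks} the internal $\tfrac 1n$-grid — e.g. translating $J$ onto $J'$ by an amount that is not a multiple of $\tfrac 1n$. Here is where I would use that $h$ conjugates $F|_J$ to $F|_{J'}$: both are, via the affine identifications, irrational rotations, and $h$ must intertwine them; combined with the constraint that $h \in \iet$ has finitely many discontinuities and that $F|_J, F|_{J'}$ are restricted rotations with discontinuities located at the endpoints of the $I_i$'s, a short argument on where discontinuities can go (using Properties \ref{proBP}) pins down that the translation from $J$ to $J'$ must be a multiple of $\tfrac 1n$ and that $h$ sends each $I_{i_a} \subset J$ onto some $I_{j_b} \subset J'$. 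A delicate subcase is when $\sigma_F$ has a fixed point $i$: then $I_i$ is itself an $F$-component on which $F$ is an irrational restricted rotation, $\CR$ forces $h$ to send $I_i$ to some such $I_j$ (same length, same rotation type up to conjugacy — here Properties \ref{conjrot}(1) shows the angles must in fact match), and the restricted-rotation structure is preserved by the same argument. Assembling these, every $h \in \CR(F)$ preserves the partition $\{I_1, \dots, I_n\}$ and restricts to a one-interior-discontinuity IET on each piece, hence $h \in G_n$, which is the claim.
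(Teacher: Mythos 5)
There is a genuine gap, concentrated exactly where you locate it, and also a false intermediate claim. First, the intermediate claim: for a cycle $(i_1,\dots,i_k)$ of $\sigma_F$ and $J=I_{i_1}\cup\dots\cup I_{i_k}$, the map $F^k|_J$ is \emph{not} "a restricted rotation on a single interval"; it is a product of $k$ restricted rotations with disjoint supports $I_{i_1},\dots,I_{i_k}$ (all with the same angle $\alpha_{i_1}+\dots+\alpha_{i_k}$), and its centralizer among the IETs of $J$ is a copy of $G_k$, much larger than the rotation group. (Nor is $F|_J$ itself conjugate to a rotation of the circle $J$ in general -- compare the map $b$ of the introduction with $\beta_1,\beta_2$ independent.) So you cannot invoke Properties \ref{conjrot}(2) to conclude that $h^r|_J$ is a rotation of $J$; that property must be applied to the restrictions to the individual grid intervals $I_{i_j}$, after one knows they are permuted. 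Second, and more seriously, the step you yourself flag as the main obstacle -- that $h$ (not just some power $h^r$) sends each $I_i$ onto some $I_j$, i.e. respects the $\tfrac1n$-grid -- is the whole content of the lemma, and your proposed "short argument on where discontinuities can go" is never carried out; it is not clear it can be made to work, since $\BP(h)$ can a priori sit anywhere inside the components.

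Both difficulties disappear if you apply the component machinery to a power of $F$ rather than to $F$ itself, which is what the paper does. Since $\CR(F)\subset\CR(F^p)$ for all $p$, choose $p$ with $\sigma_{F^p}=\id$; because $F$ has no periodic points, the minimal components of $F^p$ are then exactly the intervals $I_1,\dots,I_n$, each carrying an irrational restricted rotation. Lemma \ref{DynIETCR}, applied to $F^p$ and any $h\in\CR(F)$, immediately yields a permutation $\gamma$ with $h(I_i)=I_{\gamma(i)}$ -- no grid-breaking is possible, and no discontinuity counting is needed. Then Properties \ref{conjrot}(1) forces $\alpha_{\gamma(i)}(F^p)=\alpha_i(F^p)$, so the element $K\in G_n$ with $\sigma_K=\gamma$ and zero rotation vector commutes with $F^p$, and $K^{-1}\circ h$ fixes each $I_i$ and centralizes the irrational rotation $F^p|_{I_i}$; now Properties \ref{conjrot}(2) legitimately gives $(K^{-1}\circ h)|_{I_i}=R_{\beta_i}$, hence $h\in G_n$. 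Your outline contains all the ingredients (Lemma \ref{DynIETCR} and both items of Properties \ref{conjrot}), but assembling them around the components of $F$ instead of those of $F^p$ leaves the decisive step unproved.
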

 
\begin{proof} Let $F\in G_n$ without periodic points, noting that $C(F)\subset C(F^n)$ holds for all $n\in \Z$ and after replacing $F$ by some iterate $F^p$, it may be assumed that $\sigma_{F}=\id$ and then the intervals $I_i=[\frac{i-1}{n},\frac{i}{n})$ are the minimal components of $F$, since $F$ has no periodic points.

\smallskip

Let $H\in \CR(F)$. By Lemma \ref{DynIETCR}, there exists a permutation $\gamma \in \mathcal S_n$ such that $H(I_i)= I_{\gamma(i)}$ and $H$ conjugates the irrational rotation $F_{\vert I_i}$ to the irrational rotation $F_{\vert I_{\gamma(i)}}$. Thus, Item (1) of Properties \ref{conjrot} implies that $\alpha_{\gamma(i)} (F) =\alpha_{i}(F)$.

\smallskip

Next, we consider the map $K$ of $G_n$ defined by $\alpha (K)=0$ and $\sigma_K=\gamma$. It is easily seen that $K \in \CR(F)$, therefore $K^{-1}\circ H$ is an IET that commutes with $F$ and fixes each $I_i$. By Item (2 )of Properties \ref{conjrot}, one has $(K^{-1}\circ H)_{\vert I_i}= R_{\beta_i}$ for any $i$ meaning that $K^{-1}\circ H\in G_n$, so does $H$.
\end{proof}

More generally, we have 

\begin{prop}\label{CGn} Let $f$ be a periodic-point free IET that is conjugate to $F\in G_n$ by a PL-homeomorphism $\pl$ such that $\pl^{-1}_{\vert  I_i}$ is an orientation preserving affine map. Then the map $\pl$ conjugates $\CR(f)$ to a subgroup of $G_n$.
\end{prop}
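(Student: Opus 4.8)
The plan is to reduce to Lemma~\ref{VirtGn}. The one subtlety is that $\pl\circ h\circ \pl^{-1}$ is a priori only an AIET, so the bulk of the work is to show it is a genuine IET; once that is done, the relation with $F$ and Lemma~\ref{VirtGn} finish the argument.

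First I would pass to a convenient iterate. Since $\sigma_F\in\mathcal S_n$ has finite order, pick $p\ge 1$ with $\sigma_{F^p}=\id$. Then each $F^p_{|I_i}$ is a composition of restricted rotations of $I_i$, hence a restricted rotation $R_{\tilde\beta_i,I_i}$, and $\tilde\beta_i$ is irrational because $f$, and therefore $F$ and $F^p$, has no periodic points; consequently the intervals $I_i$ are exactly the minimal components of $F^p$ (Arnoux--Keane--Mayer, $F^p$ being periodic-point free). Because $\pl^{-1}_{|I_i}$ is an orientation-preserving affine map, conjugating a $2$-IET by it again yields a $2$-IET, so $f^p_{|J_i}=\pl^{-1}\circ F^p_{|I_i}\circ \pl$ is an irrational restricted rotation supported on the interval $J_i:=\pl^{-1}(I_i)$ (here I use that $\pl$, being a PL-homeomorphism of $[0,1)$, is increasing). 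Thus $f^p$ is a periodic-point-free product of restricted rotations with the disjoint supports $J_1,\dots,J_n$, and the $J_i$ are precisely the minimal components of $f^p$; this is also what Theorem~\ref{Novak+} predicts.

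Next I would take an arbitrary $h\in\CR(f)\subseteq\CR(f^p)$ and put $H:=\pl\circ h\circ \pl^{-1}$. By Lemma~\ref{DynIETCR} applied to $f^p$ (every point is non-$f^p$-periodic), $h$ carries each minimal component onto a minimal component, so there is $\gamma\in\mathcal S_n$ with $h(J_i)=J_{\gamma(i)}$ for all $i$, hence $H(I_i)=\pl(h(J_i))=I_{\gamma(i)}$. Since $h\in\iet$ preserves Lebesgue measure, $|J_i|=|J_{\gamma(i)}|$, so the orientation-preserving affine maps $\pl^{-1}_{|I_i}\colon I_i\to J_i$ and $\pl_{|J_{\gamma(i)}}\colon J_{\gamma(i)}\to I_{\gamma(i)}$ have reciprocal slopes. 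Therefore $H_{|I_i}=\pl_{|J_{\gamma(i)}}\circ h_{|J_i}\circ \pl^{-1}_{|I_i}$ has slope $1$ on each of its continuity intervals (the middle factor $h$ being a translation on each piece), so $H_{|I_i}$ is a piecewise translation from $I_i$ onto $I_{\gamma(i)}$ and $H\in\iet$.

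Finally, conjugating $h\circ f=f\circ h$ by $\pl$ gives $H\circ F=F\circ H$, so $H$ is an IET lying in $\CR(F)$; since $F\in G_n$ has no periodic points, Lemma~\ref{VirtGn} gives $\CR(F)\subseteq G_n$, whence $H\in G_n$. As $h$ was arbitrary, $\pl\,\CR(f)\,\pl^{-1}\subseteq G_n$, which is the assertion. The step I expect to be the crux is the passage from AIET to IET in the third paragraph: it is exactly the combination of $h$ being measure preserving and $h$ permuting the minimal components $J_i$ that forces all slopes of $H$ to equal $1$, and this is where the hypothesis that $\pl^{-1}$ is \emph{affine and orientation-preserving on each} $I_i$ is used in an essential way.
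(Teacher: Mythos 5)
Your proof is correct and follows essentially the same route as the paper's: pass to an iterate so that the $J_i=\pl^{-1}(I_i)$ become the minimal components, use Lemma \ref{DynIETCR} to get the permutation $\gamma$ with $h(J_i)=J_{\gamma(i)}$, deduce $|J_i|=|J_{\gamma(i)}|$ so that the slopes of $\pl$ cancel and $\pl\circ h\circ\pl^{-1}$ is a genuine IET, and conclude with Lemma \ref{VirtGn}. Your write-up is in fact slightly more careful than the paper's (explicitly noting $\CR(f)\subseteq\CR(f^p)$ and spelling out the slope computation), but the argument is the same.
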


\begin{proof}

As in the proof of Lemma \ref{VirtGn}, the map $f$ has no periodic points then after replacing $f$ by some iterate, we can suppose that the intervals $J_i := \pl^{-1} (I_i)$ are the minimal components of $f$. Let $g \in \CR(f)$, according to Lemma \ref{DynIETCR}, there exists a permutation $\gamma \in \mathcal S_n$ such that $g(J_i)= J_{\gamma(i)}$. In particular $J_i$ and $J_{\gamma(i)}$ have same lengths and then the restrictions of $\pl$ to $J_i$ and to $J_{\gamma (i)}$ are affine with same slopes. 

This implies that the a priori AIET $G=\pl \circ g \circ \pl ^{-1}$, which sends $\pl(J_i)=I_i$ to $\pl(J_{\gamma(i)})=I_{\gamma(i)}$ for any $i$, is actually an IET. Finally $G$ is an IET that commutes with $F$ (i.e. $G\in \CR(F)$), therefore, by Lemma \ref{VirtGn}, $G$ belongs to $G_n$. \end{proof}

\section{Free actions of $\mathbb Z^2$ by IET} The aim of this section is to prove Proposition \ref{z2}. By  Theorem \ref{Novak+}, it can be reformulated as: \textit{``Any free action of $\mathbb Z^2$ in $\iet$ consists of IETs of bounded break point type."}

\smallskip

This proof, inspired by the description given by Minakawa for the free actions of $\mathbb Z^2$ by circle PL-homeomorphisms (see \cite{Mi}), uses the properties of the rates ${\mathcal N}_x(f)$.

\medskip

As the action $\langle f,h\rangle$ of $\mathbb Z^2$ is free, for every $x\in I$ and $k,q$ distinct integers, one has: $${\mathcal O}_{f}(h^k(x)) \cap {\mathcal O}_{f}(h^q(x)) = \emptyset.$$
Therefore, as $\# BP(f)$ is finite, there exists a positive integer $N_0$ such that: $$\forall n\geq N_0, \ \  {\mathcal O}_{f}(h^n(x)) \cap BP(f) = \emptyset.$$
Using Item (1) of Properties \ref{inva}, one get ${\mathcal N}_{h^n(x)}(f)=0$ for $n\geq N_0$ and, by Item (2), $$ {\mathcal N}_{h^n(x)}(h^n \circ f \circ h^{-n})={\mathcal N}_x(f).$$
\noindent By {commutativity}, it holds that $ {\mathcal N}_{h^n(x)}(h^n \circ f \circ h^{-n})={\mathcal N}_{h^n(x)}(f)$. 

\medskip

Combining these properties, we get ${\mathcal N}_x(f)=0$ for any $x \in I$. Finally, by Theorem \ref{Novak+}, $f$ has bounded break point type.

\section{Proof of Theorem \ref{theo2}.}

Eventually conjugating by an IET, it suffices to prove 

\begin{prop}\label{ext2}
Let $G$ be a virtually abelian subgroup of $\mathcal G$. If $\G$ contains a product of restricted rotations with disjoint supports and without periodic points, $f_1$, then $\G$ is conjugate to a subgroup of some $G_n$ by the canonical PL-homeomorphism that conjugates $f_1$ to an element of $G_n$.
\end{prop}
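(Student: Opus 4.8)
The plan is to show that every element of $\G$ lies in $G_n$, where $G_n$ and the conjugating PL-homeomorphism $\pl$ are exactly those produced by the implication $(2)\Ra(4)$ of Theorem~\ref{Novak+} applied to $f_1$ (here $f_1$ itself already plays the role of the product of restricted rotations, so $p=1$). After this conjugation, write $F_1 := \pl\circ f_1\circ\pl^{-1}\in G_n$; it is periodic-point free. First I would observe that it suffices to treat a finite-index abelian subgroup: since $\G$ is virtually abelian, pick $A\lhd\G$ abelian of finite index. Every $g\in\G$ has some power $g^k\in A$, and if I can show $A$ is conjugated into $G_n$ then, since $G_n$ is closed under taking roots in the relevant sense — more precisely, because an IET whose $k$-th power lies in $G_n$ and which commutes with the periodic-point-free $F_1$ must itself lie in $G_n$ — I can bootstrap from $A$ to all of $\G$. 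So the crux reduces to: every element of $A$ is conjugated into $G_n$ by $\pl$.

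The key step is then Proposition~\ref{CGn} (equivalently Lemma~\ref{VirtGn}): since $A$ is abelian and $f_1\in A$, every $g\in A$ commutes with $f_1$, hence (after the conjugation by $\pl$) every $\pl\circ g\circ\pl^{-1}$ commutes with the periodic-point-free $F_1\in G_n$. By Proposition~\ref{CGn}, the PL-map $\pl$ — whose inverse is orientation-preserving affine on each $I_i$, as guaranteed by clause $(4)$ of Theorem~\ref{Novak+} — conjugates $\CR(f_1)\supseteq A$ into $G_n$. This already places the whole abelian subgroup $A$ inside $G_n$ after conjugation by $\pl$. Now for general $g\in\G$: choose $k\geq 1$ with $g^k\in A$, so $G:=\pl\circ g\circ\pl^{-1}$ is an a priori AIET whose $k$-th power $G^k$ lies in $G_n$. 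Since $g$ commutes with $f_1$ (as $f_1\in\G$ need not be central — here I must be slightly careful), I should instead argue as follows: $g^k\in A$ commutes with $f_1$, and one checks directly that $g$ permutes the $f_1$-minimal components (because $g$ commutes with $g^k$, whose minimal components are the same as those of a suitable power of $f_1$ once we pass to the iterate with trivial permutation). This forces $G$ to map each $I_i$ affinely with slope $1$ onto some $I_{\gamma(i)}$, hence $G$ is an honest IET; being an IET with $G^k\in G_n$ and respecting the partition into the $I_i$'s, it lies in $G_n$. Applying this to a finite generating set of $\G$ gives $\pl\,\G\,\pl^{-1}\subseteq G_n$.

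The main obstacle I anticipate is exactly the last passage from the finite-index abelian subgroup $A$ to all of $\G$: an element $g$ with $g^k\in A$ need not commute with $f_1$ on the nose, so I cannot invoke Proposition~\ref{CGn} directly for $g$. The fix is to run the argument of Lemma~\ref{VirtGn} one level up: after replacing $f_1$ by the iterate with $\sigma=\id$, its minimal components are the fixed intervals $I_i$; any $g$ with $g^k\in A\subset G_n$ (and $g^k$ periodic-point free if we chose $f_1$ inside $A$ appropriately, or more robustly using that $g$ commutes with $g^k$ and $g^k$ shares an invariant structure with $f_1$) permutes these minimal components by Lemma~\ref{DynIETCR}, and then the affine-on-each-$I_i$ property of $\pl^{-1}$ upgrades the AIET $G=\pl\circ g\circ\pl^{-1}$ to an IET, which then automatically lies in $G_n$. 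The rest is the bookkeeping already carried out in the proofs of Lemma~\ref{VirtGn} and Proposition~\ref{CGn}, which I would cite rather than repeat.
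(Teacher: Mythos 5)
Your first half --- conjugating a finite-index abelian subgroup $A$ into $G_n$ by applying Proposition~\ref{CGn} to the centralizer of (a power of) $f_1$ --- is essentially the paper's argument, modulo the small point that $f_1$ need not lie in $A$: you must first replace $f_1$ by a power $f_1^{p}\in A$, which is harmless since that power is still a periodic-point-free product of restricted rotations with the same components. The genuine gap is in the passage from $A$ to all of $\G$, and both of your substitutes for the missing commutation of $g$ with $f_1$ fail. First, you claim $g$ permutes the $f_1$-minimal components ``because $g$ commutes with $g^k$, whose minimal components are the same as those of a suitable power of $f_1$'': but $g^k$ is just some element of $A$ --- it can be the identity (if $g$ has finite order), or have periodic points, or have minimal components unrelated to the $J_i$ --- so Lemma~\ref{DynIETCR} applied to the pair $(g,g^k)$ gives no control on the $J_i$. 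Second, your closing step ``being an IET with $G^k\in G_n$ and respecting the partition into the $I_i$'s, it lies in $G_n$'' is false as stated: take $G|_{I_1}=u$ any finite-order IET from $I_1$ onto $I_2$ with several interior discontinuities and $G|_{I_2}=u^{-1}$ back, so that $G^2=\id\in G_n$ and $G$ permutes the $I_i$, yet $G\notin G_n$.

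The paper closes this gap with an idea absent from your proposal: for arbitrary $g\in\G$, consider the conjugate $\phi=g\circ f_1\circ g^{-1}$. It is again a periodic-point-free product of restricted rotations and lies in $\G$, so a power $\phi^{q}$ lies in the finite-index abelian subgroup and therefore commutes with a power of $f_1$; Proposition~\ref{CGn} then puts $\pl\circ\phi^{q}\circ\pl^{-1}$ in $G_n$ and (after a further power) forces the minimal components of $\phi^{q}=g\circ f_1^{q}\circ g^{-1}$ to be the $J_i$. Since these are also the $g$-images of the minimal components of $f_1^{q}$, the map $g$ permutes the $J_i$, and the slope cancellation makes $S=\pl\circ g\circ\pl^{-1}$ an honest IET. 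One still needs a separate rigidity statement (Lemma~\ref{conjGn}: if $S$ conjugates a periodic-point-free $T\in G_n$ to an element of $G_n$, then $S\in G_n$, proved by a break-point/minimality argument on each $I_i$), applied to $T=\pl\circ f_1^{q}\circ\pl^{-1}$ and this $S$. Without the conjugate $g f_1 g^{-1}$ and some version of that lemma, your bootstrap from $A$ to $\G$ does not go through.
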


\subsection{Proof of Proposition \ref{ext2}.} \

\smallskip
Let $\G<\iet$ and $H\lhd \G$ be an abelian subgroup of $\G$ with finite index, $p$. 

\begin{claim}\label{power} To any $f\in \G$ we can associate a positive integer $p_f$ in order that the maps $f^{p_f}$ are pairwise commuting.
\end{claim}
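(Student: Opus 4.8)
The plan is to reduce the whole statement to the single exponent $p=[\G:H]$. The key point to use is that $H$ is \emph{normal} in $\G$, so that $\G/H$ is a genuine group, of order $p$. By Lagrange's theorem $(fH)^p=H$ for every $f\in\G$, i.e. $f^p\in H$ for all $f\in\G$.

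With this in hand the claim is immediate: take $p_f=p$ for \emph{every} $f\in\G$. Then each $f^{p_f}=f^p$ lies in $H$, and since $H$ is abelian the family $\{\,f^{p}\ :\ f\in\G\,\}$ consists of pairwise commuting IETs, which is exactly what is asserted. If one prefers the smallest possible exponent, one can instead let $p_f$ be the order of the coset $fH$ in $\G/H$; it divides $p$ and still satisfies $f^{p_f}\in H$, so the same conclusion holds. (One could even dispense with normality by defining $p_f$ as the length of the cycle through $H$ in the permutation of the coset space $\G/H$ induced by left translation by $f$: this is $\le p$ and again yields $f^{p_f}\in H$.)

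There is essentially no obstacle in this step; the only thing to keep track of is that it is precisely the normality of $H$ — equivalently, the fact that $\G/H$ is a group, not merely a coset space — that makes Lagrange applicable and produces a uniform exponent. This uniform choice $p_f=p$ is what subsequently allows one to work inside the abelian group $H$, where the elements $f^p$ all commute, and to carry out the rest of the proof of Proposition~\ref{ext2}.
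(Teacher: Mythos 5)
Your proof is correct and follows essentially the same route as the paper: both reduce the claim to showing that some power of each $f$ lands in the abelian finite-index subgroup $H$, where commutativity is automatic. The only cosmetic difference is that you obtain the exponent via Lagrange in the quotient group $\G/H$ (giving the uniform choice $p_f=[\G:H]$), whereas the paper uses a pigeonhole argument on the finitely many cosets to find some $t-s$ with $f^{t-s}\in H$ -- an argument that, like your parenthetical remark on the permutation of cosets, does not even require normality.
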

\begin{quote}
Indeed, let $f\in \G$, since the set $G/H$ is finite, there exist integers $0\leq s < t$ such that $f^s$ and $f^t$ belong to a same class modulo $H$, that is $f^{t-s} \in H$. So taking $p_f=t-s$ leads to the required conclusion.
\end{quote}

\smallskip

Consider $\pl$ the PL-homeomorphism associated to $f_1$ as involved for proving $(3) \Ra (4)$ of Theorem \ref{Novak+}. In particular, the $f_1$-components are the intervals $J_i=\pl^{-1}(I_i)$ for $i=1,...,n$ ; they are minimal and $\pl_{\vert J_i}$ has constant slope for any $i\in \{1,\cdots , n\}$.

\smallskip

Let $f\in G$ and $\phi = f \circ f_1 \circ f^{-1}$. By Claim \ref{power}, there exist $p_{f_1}, p_\phi \in \N^*$ such that $f_1^{p_{f_1}}$ and $\phi^{p_\phi}$ commute. Thus,  Proposition \ref{CGn} applies to $\phi^{p_\phi} \in \CR(f_1^{p_{f_1}})$ and implies that $\phi ^{p_{\phi}}$ is conjugate by $\pl$ to an element of $G_n$, namely $\pl \circ \phi ^{p_{\phi}} \circ \pl^{-1}\in G_n$. 

Moreover, as $\phi ^{p_{\phi}}$ is periodic points free, eventually replacing $p_{\phi}$ by a multiple, we may assume that the components of $\phi ^{p_{\phi}}$ are the intervals $J_i$ and they are minimal. 
\smallskip

Therefore, as $f$ conjugates $\phi ^{p_{\phi}}$ and $f_1^{p_{\phi}}$, the map $f$ permutes the $J_i$. This implies that the a priori AIET $\pl \circ f \circ \pl^{-1}$ is an IET. Indeed, for any $i=1,...,n$, it holds that
\begin{itemize}
\item[$\pbul$] the map $\pl^{-1}$ sends $I_i$ to $J_i$ with constant slope $\frac{\vert J_i \vert}{\vert I_i \vert}$, 
\item[$\pbul$] the IET $f$ sends $J_i$ to some $J_k$ of same length, with constant slope $1$ and
\item[$\pbul$] the map $\pl$ sends $J_k$ to $I_k$ with constant slope $\frac{\vert I_k \vert}{\vert J_k \vert}= \frac{\vert I_i \vert}{\vert J_i \vert}$.
\end{itemize}

\smallskip

Now, it remains to prove that $\pl \circ f \circ \pl^{-1}\in G_n$.

\begin{lemm} \label{conjGn} Let $n$ be a positive integer and $T\in G_n$ be a periodic point free IET. If $S\in \mathcal G$  is such that $T'=S T S^{-1}\in G_n$ then $S\in G_n$.
\end{lemm}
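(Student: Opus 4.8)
The statement to prove is Lemma \ref{conjGn}: if $T\in G_n$ is periodic point free, $S\in\mathcal G$, and $STS^{-1}=T'\in G_n$, then $S\in G_n$. The natural strategy is to reduce to the already-established centralizer results by passing to an appropriate iterate and exploiting that the minimal components of $T$ (and of $T'$) are forced to be the intervals $I_i=[\frac{i-1}{n},\frac{i}{n})$. The point is that $S$ must carry the decomposition of $[0,1)$ into the $f$-components of $T$ onto the decomposition into the $f$-components of $T'$, and for periodic point free elements of $G_n$ these decompositions are extremely rigid.

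First I would reduce $\sigma_T$ to the identity: since $C(T)\subseteq C(T^n)$ and conjugation by $S$ intertwines $T^k$ with $(T')^k$, replacing $T$ by a suitable power $T^q$ (with $q$ chosen so that $\sigma_{T^q}=\sigma_{(T')^q}=\id$, e.g.\ $q$ a common multiple of the orders of $\sigma_T$ and $\sigma_{T'}$) changes nothing about the claim $S\in G_n$. So assume $\sigma_T=\sigma_{T'}=\id$. Then $T$ restricted to each $I_i$ is an irrational rotation $R_{\alpha_i}$ (irrational because $T$ has no periodic points), so the $I_i$ are exactly the minimal components $M_T(x)$ of $T$; likewise the $I_j$ are the minimal components of $T'$. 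Now apply Lemma \ref{DynIETCR}-type reasoning, or rather the direct observation that $S$ conjugates $T$ to $T'$: for any non-$T$-periodic $x$ (here all $x$ are such), $S(M_T(x))=M_{T'}(S(x))$, hence $S$ permutes the intervals $I_i$, say by a permutation $\gamma\in\mathcal S_n$ with $S(I_i)=I_{\gamma(i)}$. Since all $I_i$ have the same length $\frac1n$ and the ambient maps are IETs (translations), the restriction $S_{\vert I_i}$ is an IET from $I_i$ to $I_{\gamma(i)}$, and $S_{\vert I_i}$ conjugates $R_{\alpha_i}$ (up to the affine identification with a rotation, i.e.\ up to the translation $\frac{\gamma(i)-i}{n}$) to $R_{\alpha_{\gamma(i)}}$.

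Next, to conclude $S\in G_n$ it suffices to show each $S_{\vert I_i}$ is itself a restricted rotation on $I_i$ composed with the translation $\frac{\gamma(i)-i}{n}$, i.e.\ has only one interior discontinuity. Introduce the ``permutation element'' $K\in G_n$ with $\alpha(K)=0$ and $\sigma_K=\gamma$, exactly as in the proof of Lemma \ref{VirtGn}; one checks $K\in\CR$ of the relevant map and that $K^{-1}\circ S$ fixes each $I_i$ and still conjugates $T_{\vert I_i}$ to $T'_{\vert I_i}$. So without loss of generality $\gamma=\id$ and $S$ preserves each $I_i$, with $S_{\vert I_i}$ an IET conjugating the irrational rotation $R_{\alpha_i}$ to the irrational rotation $R_{\alpha'_i}$. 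By Item (1) of Properties \ref{conjrot}, $\alpha_i=\alpha'_i$, so $S_{\vert I_i}$ lies in $\CR(R_{\alpha_i})$ inside $\iet(I_i)$, which by Item (2) of Properties \ref{conjrot} is the rotation group: $S_{\vert I_i}=R_{\beta_i}$ has a single interior discontinuity. Therefore $S$ preserves the partition into the $I_i$ and restricts to restricted rotations on each, i.e.\ $S\in G_n$; undoing the reduction by $K$ gives the general case.

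The main obstacle — really the only subtle point — is the very first reduction, making sure that replacing $T$ by a power does not lose information and that the $I_i$ genuinely become the minimal components: one must verify that $T$ periodic point free together with $\sigma_T=\id$ forces each $\alpha_i$ irrational (otherwise $T_{\vert I_i}$ would be a rational rotation with periodic points), and that $S$ conjugating $T$ to $T'$ still conjugates $T^q$ to $(T')^q$ so the whole argument runs at the level of powers. Everything after that is a direct transcription of the proof of Lemma \ref{VirtGn}, with the commuting pair $(H,F)$ there replaced by the conjugating pair $(S,T)$ and ``$F_{\vert I_i}$ conjugate to $F_{\vert I_{\gamma(i)}}$'' replaced by ``$T_{\vert I_i}$ conjugate via $S$ to $T'_{\vert I_{\gamma(i)}}$''; no new calculation is needed.
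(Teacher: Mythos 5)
Your proof is correct and its overall architecture is the same as the paper's: pass to a power so that $\sigma_T=\sigma_{T'}=\id$, observe that the $I_i$ become the minimal components so $S$ permutes them, absorb that permutation into the element $K\in G_n$ with $\alpha_K=0$, $\sigma_K=\gamma$, and reduce to the case where $S$ preserves each $I_i$. The one place you genuinely diverge is the residual case of a single interval: the paper identifies $[0,1)$ with the circle and computes $\BP(S)=\BP(T'S)=\BP(ST)=T^{-1}(\BP(S))$, so that $\BP(S)$ is a finite $T$-invariant set, hence empty by minimality, forcing $S$ to be a rotation; you instead invoke Properties \ref{conjrot} (distinct irrational rotations are non-conjugate in $\iet$, and the centralizer of an irrational rotation is $\ietd$) to first get $\alpha_i=\alpha'_i$ and then place $S_{\vert I_i}$ in that centralizer. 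Both are valid; yours leans on the quoted result of \cite{No}, the paper's is a self-contained two-line computation. Two small imprecisions in your write-up, neither fatal: $K$ need not lie in any centralizer here (what is actually used is only that $K\in G_n$ and $G_n$ is a group, so $(K^{-1}S)T(K^{-1}S)^{-1}=K^{-1}T'K\in G_n$), and $K^{-1}S$ conjugates $T_{\vert I_i}$ to the rotation by $\alpha'_{\gamma(i)}$ transported to $I_i$, not literally to $T'_{\vert I_i}$ — but the argument you run afterwards only needs that this is an irrational rotation, so the conclusion stands.
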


\begin{proof} \ 

$\bullet$ If $n=1$ then $T$ and $T'$ are irrational rotations and $T'S=ST$. Identifying $[0,1)$ with $\mathbb S^1$, the maps $T$ and $T'$ become continuous and computing break point sets, we get 
$$\BP(S)=\BP(T'S)=\BP(ST)=T^{-1}(\BP(S)).$$
Thus $\BP(S)$ is a finite $T$-invariant set so the minimality of $T$ implies that it is empty and then $S$ is a rotation.
 
\medskip
 
$\bullet$ If $n\geq 2$ then eventually changing $T$ for some iterate, we can suppose that $\sigma_T$ and $\sigma_{S T S^{-1}}$ are trivial.

\begin{itemize}
\item[$\pbul$] If $S$ preserves each $I_j$, we apply the case $n=1$ to the restrictions to $I_j$ of $T$ and $S$.
We conclude that $S\vert_{I_j}$ is a rotation and then $S\in G_n$. 
\item[$\pbul$] If $S$ does not preserve each $I_j$. As $S$ conjugates two periodic point free maps in $G_n$ there exists a permutation $\gamma$ such that $S(I_j)=I_{\gamma(j)}$.  We consider the element $K\in G_n$ defined by $\sigma_K=\gamma$ and $\alpha_K=0$. We have $ K^{-1} S \circ T \circ S^{-1} K \in G_n$ and $K^{-1} S$ preserves each $ I_j$. According to the previous case, $K^{-1} S\in G_n$ and therefore $S\in G_n$. \end{itemize} 

\vskip -0.6cm \end{proof}

We conclude by applying Lemma \ref{conjGn} to $T= \pl \circ f_1^{p_{\phi}} \circ \pl^{-1} \in G_n $ and $S=\pl \circ f \circ \pl^{-1}$.

\section{Proofs of Theorem \ref{theo1} and corollaries \ref{coro1} and \ref{coro2}.}

\noindent{\bf Proof of Theorem \ref{theo1}}. Let $\G$ be a finitely generated group acting freely by IET and $\Gamma<\iet$ be the image of a copy of $\mathbb Z ^2$ contained in $\G$. According to Remark \ref{R1Gro}, $\G$ is virtually abelian. It is plain that $\Gamma$ also acts freely, then Proposition \ref{z2} implies that $\Gamma$ contains IETs that are conjugate in $\iet$ to products of restricted rotations with disjoint supports and without periodic points. Therefore, $\G$ satisfies the hypothesis of Theorem \ref{theo2} and then its conclusion.

\bigskip

\noindent{\bf Proof of Corollary \ref{coro1}}. Let $\G$ be a virtually abelian subgroup of $\iet$ containing a copy $\langle a,b \rangle$ of $\mathbb Z^2$ and whose action is totally minimal. 

Let $K$ be a finite index abelian normal subgroup of $\G$. Claim \ref{power} implies that there exist integers $p$ and $q$ such that $a^p$ and $b^q$ generate a copy $Z$ of $\mathbb Z^2$ in $K$. We claim that that $Z$ acts freely. 

\begin{quote} Indeed, we argue by contradiction supposing that there exist $h\in Z \setminus\{\id\}$ and $x\in I$ such that $x\in \fix(h)$. As $K$ is abelian, Lemma \ref{DynGen} implies that $\fix(h)$ is a non empty $K$-invariant set. By totally minimality of $G$, all $K$-orbits are dense, hence $\overline{\fix(h)} = [0,1)$ and then $h=\id$, a contradiction.
\end{quote}
\smallskip

Therefore, by Proposition \ref{z2}, $\G$ contains maps which are conjugate in $\iet$ to products of restricted rotations with disjoint supports and without periodic points. Let $f_1$ be such an element of $\G$, Theorem \ref{theo2} applies to $\G$ and we conclude that there exists a map $H=P\circ E$ that conjugates $\G$ to a subgroup of some $G_n$, with $P$ a PL-homeomorphisms, $E\in\iet$  and  $H$ of constant slope on each minimal component of $f_1$. 

Moreover, as the conjugation by $H$ preserves the totally minimality (according to classical arguments for topological conjugacy and extra arguments given in the proof of Lemma \ref{DynIETCR} and its footnote), this subgroup is also totally minimal.

\medskip

We claim that no subgroup of $G_n$, $n\geq 2$, is totally minimal.
\begin{quote} Indeed, if $\G<G_n$ then $G_0=\{g \in \G \ | \ \sigma_g=\id \}$ is a finite index subgroup of $G$ that preserves any $I_i$, $i=1,...,n$.  Thus, for $G_0$ to be minimal it is necessary that $n=1$.
\end{quote} 
Finally $n=1$, so the map $f_1$ is minimal and then $H$ has constant slope $1$. In conclusion, $\G$ is conjugate in $\iet$ to a subgroup of $G_1$.

\medskip

\noindent{\bf Proof of Corollary \ref{coro2}}. Let $\G$ be a finitely generated subgroup of $\iet$ and $f\in \G$ be a totally minimal IET that is not conjugate to a rotation. 

We first note that the action of $\G$ is also totally minimal, since any finite index subgroup of $\G$ contains some non trivial power of $f$.

We next argue by contradiction, supposing that $\G$ is virtually nilpotent and not virtually cyclic. By \cite{DFG}, $\G$ is virtually abelian and not virtually cyclic then it contains a copy of $\mathbb Z^2$. Therefore, applying Corollary \ref{coro1}, we get that $f$ is conjugate in $\iet$ to a rotation, a contradiction.

\section{Non virtually abelian subgroups of \rm{$\iet$}.} 

\subsection{Proof of Corollary \ref{coro3}.} \ 

Item (1) is the special case $n=1$ in Item (2), so  we only prove Item (2): We argue by contradiction supposing that $G$ is virtually abelian. Hence, according to Claim \ref{power}, there exist integers $p$ and $q$ such that $f^p\in G_n$ and $g f^q g^{-1}$ commute. Therefore $g f^q g^{-1} \in G_n$ by Lemma \ref{VirtGn} and then Lemma \ref{conjGn} leads to $g\in G_n$, a contradiction.

\subsection{Related examples involving arithmetic conditions.} \ 

\begin{nota}
Let $s\in \mathbb N^*$ and $\alpha_1, ..., \alpha_s$ be $s$ $\mathbb Q$-independent irrational numbers. The additive subgroup of $\mathbb R$ generated by $1$ and the $\alpha_i$'s is denoted by $\langle 1,\alpha_1,...,\alpha_s \rangle_{\mathbb Q}$. We set
\begin{itemize}
\item $\boldsymbol{\mathcal G_{\mathbb Q}} = \left\{g\in \iet \ | \ g(x)-x \in \mathbb Q, \ \forall x\in I  \right\}$  and 
\item $\boldsymbol{\mathcal G_{\alpha_1,...,\alpha_s}} = \left\{g\in \iet \ | \ g(x)-x \in \langle 1,\alpha_1,...,\alpha_s \rangle_{\mathbb Q}, \ \forall x\in I \right\}$.
\end{itemize}
\end{nota}

\begin{coro} \label{coro4} Let  $p,r \in \mathbb N^*$, $r<p$ and $\alpha_1, ..., \alpha_p$ be $p$ $\mathbb Q$-independent irrational numbers. If $f\in \mathcal G_{\alpha_1,...,\alpha_r}$ is totally minimal not conjugate to a rotation and  $g\in \mathcal G_{\alpha_{r+1},...,\alpha_p}$ then the group generated by $f$ and $g$ is not virtually nilpotent.
\end{coro}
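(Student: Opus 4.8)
\textbf{Proof proposal for Corollary \ref{coro4}.} Set $G=\langle f,g\rangle$, $\Lambda_1=\langle 1,\alpha_1,\dots,\alpha_r\rangle_{\mathbb Q}$ and $\Lambda_2=\langle 1,\alpha_{r+1},\dots,\alpha_p\rangle_{\mathbb Q}$, so that $f\in\mathcal G_{\alpha_1,\dots,\alpha_r}$, $g\in\mathcal G_{\alpha_{r+1},\dots,\alpha_p}$ and, since the $\alpha_i$ are globally $\mathbb Q$-independent, $\Lambda_1\cap\Lambda_2=\mathbb Q$. As $f$ is a totally minimal IET not conjugate to a rotation and $G$ is finitely generated, Corollary \ref{coro2} gives that $G$ is either virtually cyclic or not virtually nilpotent; the second option is the desired conclusion, so the whole point is to exclude that $G$ be virtually cyclic. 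We shall use the following elementary remark, playing the role of a small lemma: \emph{if an IET $T$ has all translations in $\mathbb Q$ then $T$ has a periodic point} — indeed, for $N$ a common denominator of the finitely many translation values of $T$, the non-empty finite set $\tfrac1N\mathbb Z\cap[0,1)$ is mapped into itself by the injective map $T$, hence permuted by it, so every point of it is $T$-periodic; in particular such a $T$ is never minimal, and any IET lying in $\mathcal G_{\alpha_1,\dots,\alpha_r}\cap\mathcal G_{\alpha_{r+1},\dots,\alpha_p}=\mathcal G_{\mathbb Q}$ is non-minimal.

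Suppose then that $G$ is virtually cyclic. Since $f$ is minimal it has infinite order, so $\langle f\rangle$ has finite index in $G$; comparing the cosets $g^i\langle f\rangle$, $i\in\mathbb Z$, we get $k\ge1$ and $m\in\mathbb Z$ with $g^k=f^m$. As $\mathcal G_{\alpha_1,\dots,\alpha_r}$ and $\mathcal G_{\alpha_{r+1},\dots,\alpha_p}$ are subgroups of $\mathcal G$ containing $f$ and $g$ respectively, $f^m=g^k$ lies in $\mathcal G_{\mathbb Q}$. If $m\neq0$ this contradicts the remark above, because $f^m$ is a non-zero power of the totally minimal $f$, hence minimal. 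So the only case left to rule out is $m=0$, i.e. $g$ of finite order.

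Here one passes to the normal core $\langle f^{e_0}\rangle\lhd G$ of $\langle f\rangle$, an infinite cyclic normal subgroup: as $g$ normalises it and $\mathrm{Aut}(\mathbb Z)=\{\pm1\}$, either $g$ or $g^2$ commutes with the minimal IET $F:=f^{e_0}$. Writing $h(x)=x+t_h(x)$, the commutation of $F$ with $G'\in\{g,g^2\}$ gives, for all $x$, $t_F(G'(x))-t_F(x)=t_{G'}(F(x))-t_{G'}(x)$; the left side lies in $\Lambda_1$, the right in $\Lambda_2$, hence both lie in $\mathbb Q$. Thus $t_{G'}$ is constant modulo $\mathbb Q$ along $F$-orbits; since $F$ is minimal and the level sets $\{x:t_{G'}(x)\equiv c \bmod\mathbb Q\}$ are finite unions of intervals, $t_{G'}$ is constant modulo $\mathbb Q$ on all of $[0,1)$, and summing $t_{G'}$ around a finite $G'$-orbit forces this constant to be $0$, i.e. $G'\in\mathcal G_{\mathbb Q}$. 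On the other hand, by Lemma \ref{DynGen} each ${\Per}_q(G')$ is $F$-invariant, so all $G'$-orbits have one common size $q\ge2$ (the value $q=1$ giving $G'=\id$); using the total minimality of $F$ (no non-empty proper $F^t$-invariant finite union of intervals exists) one checks that the only rational IET with all orbits of size $q$ commuting with the totally minimal $F$ is a rational rotation, so $G'=R_{j/N}$ with $\gcd(j,N)=1$ and $N=q\ge2$. Then $F$ commutes with $R_{1/N}$, which exhibits $F$ — hence $f$ — as an $N$-fold lift of a minimal IET $v$ of the circle of length $1/N$, with $v$ again totally minimal and with translations still in $\Lambda_1$; if $v$ is a rotation then $f$ is conjugate to a rotation, contrary to hypothesis, and in the remaining situation the same scheme applied to $v$ (whose break-point count is strictly smaller) runs down to a contradiction.

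The step I expect to be the real obstacle is precisely this last one: when $g$ has finite order the arithmetic contradiction does not appear directly, and one must first run the dynamical reduction — $G'$ commutes with $F$, $G'$ is $\mathbb Q$-valued, $G'$ has constant orbit size, hence $G'$ is a rational rotation and $f$ is a cover of a shorter totally minimal IET — before the hypotheses on $f$ (totally minimal, not a rotation) can be brought to bear. By contrast the case $m\neq0$, and the rest of the argument (invoking Corollary \ref{coro2}, the elementary lemma, and the structure of virtually cyclic groups), is routine.
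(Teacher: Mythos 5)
Your overall strategy is the paper's: invoke Corollary \ref{coro2} to reduce the statement to excluding that $G=\langle f,g\rangle$ be virtually cyclic, then use virtual cyclicity to produce a common power of $f$ and $g$ lying in $\mathcal G_{\alpha_1,\dots,\alpha_r}\cap\mathcal G_{\alpha_{r+1},\dots,\alpha_p}=\mathcal G_{\mathbb Q}$, whose elements are periodic, contradicting total minimality of $f$. Up to and including your case $m\neq 0$ this coincides with the paper's proof (which writes $f^{mt}=h^{nt}=g^{qn}\in\mathcal G_{\mathbb Q}$), and your justification that $\mathcal G_{\mathbb Q}$ consists of finite-order maps is exactly the paper's footnote. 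You are also right that the case where $g$ has finite order is not covered by this arithmetic step: the paper simply asserts that the exponents $m,n,q,t$ are all positive, which is precisely where that case is hidden, so isolating it is a legitimate observation rather than a detour.

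Your treatment of that remaining case, however, is not a proof, and it is the part that carries all the remaining weight. Three concrete gaps. (i) If $gFg^{-1}=F^{-1}$ and $g^{2}=\id$, your $G'=g^{2}$ is the identity and everything that follows is vacuous, while $\langle F,g\rangle$ is then infinite dihedral, hence genuinely virtually cyclic; this reversibility scenario (which the authors study elsewhere and which certainly occurs among IETs) is exactly the configuration one must rule out, and you do not address it. (ii) The assertion that ``the only rational IET with all orbits of size $q$ commuting with the totally minimal $F$ is a rational rotation'' is a substantive claim stated with ``one checks'': commutation only forces each $\mathrm{Per}_q(G')$ to be $F$-invariant (Lemma \ref{DynGen}), which does not by itself identify $G'$ among finite-order elements of $\mathcal G_{\mathbb Q}$. (iii) The terminal step ``the same scheme applied to $v$ runs down to a contradiction'' is not an argument: you do not say what strictly decreases, why the descent terminates, or why termination produces a contradiction rather than simply ending at a rotation quotient of $f$ without forcing $f$ itself to be conjugate to a rotation. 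Indeed it is far from clear that a totally minimal IET not conjugate to a rotation cannot commute with a nontrivial rational rotation (minimal $\mathbb Z/2$-extensions of irrational rotations are natural candidates), so this subcase needs either a genuine proof or an explicit non-degeneracy hypothesis on $g$; as written, your last paragraph is a plan rather than a proof.
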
 

\begin{proof} According to Corollary \ref{coro2}, $\G$ is either virtually cyclic or not virtually nilpotent. Suppose, contrary to our claim that $\G$ is virtually cyclic. Hence, there exists $h\in \G$ such that $Z=\langle h \rangle$ has finite index in $\G$ and then $f^m = h ^n$ and $g^q = h^t$ for some  positive integers $m$, $n$, $q$ and $t$. Therefore $f^{mt} = h ^{nt}= g^{qn} \in \mathcal G_{\alpha_1,,...,\alpha_r}\cap \mathcal G_{\alpha_{r+1},...,\alpha_p}=\mathcal G_{\mathbb Q}$ that only contains finite order maps \footnote{Let $f\in \mathcal G_{\mathbb Q}$  and $q$ be the least commun multiple of the deniminators of the translations of $f$, then $\mathcal O_f(x) \subset \{ x+ \frac{p}{q}, \ p\in \Z \} \cap [0,1)$ is finite.}, in particular $f$ is not totally minimal, a contradiction. Finally, $\G$ is not virtually nilpotent.
\end{proof} 
 
In the following subsections, we exhibit examples of finitely generated non virtually nilpotent subgroups of $\iet$ that have extra properties. In the first one, we provide metabelian examples and in the second one non virtually solvable groups.

\subsection{Metabelian examples.} \label{ExMeta} \

Let $\G$ be the subgroup of $\iet$ generated by $R_{\alpha}$, $\alpha \in [0, \frac{1}{3})\setminus \Q$ and the map $g\in G_3$ described by the following picture.

\vskip -0.7cm
\begin{figure}[H] 
\scalebox{.65}{
\begin{picture}(560,170)
\put(0,0){\line(1,0){150}} \put(0,150){\line(1,0){150}}
\put(0,0){\line(0,1){150}} \put(150,0){\line(0,1){150}}

\put(-5,-20){$0$}
\put( 50,-15){$\frac{1}{3}$}
\put(100,-20){${\frac{2}{3}}$}
\put(150,-20){$1$}

\put(-15,50){$\frac{1}{3}$}
\put(-15,100){$\frac{2}{3}$}
\put(-15,150){$1$}

\put(50,0){\dashbox(0,150){}}
\put(100,0){\dashbox(0,150){}}

\put(0,50){\dashbox(150,0){}}
\put(0,100){\dashbox(150,0){}}

\put(0,100){\textcolor{red}{\line(1,1){50}}}
\put(15,134){\textcolor{red}{$+\frac{2}{3}$}}
\put (50,50){\textcolor{red}{\line(1,1){50}}}
\put(60,80){\textcolor{red}{$+0$}}
\put (100,0){\textcolor{red}{\line(1,1){50}}}
\put(110,30){\textcolor{red}{$-\frac{2}{3}$}}

\put (70,-30){\Large{$\boldsymbol{g}$}}

\put(200,0){\line(1,0){150}} \put(200,150){\line(1,0){150}}
\put(200,0){\line(0,1){150}} \put(350,0){\line(0,1){150}}

\put(195,-20){$0$}
\put(310,-20){$1-\alpha$}
\put(350,-20){$1$}

\put(185,20){$\alpha$}
\put(185,150){$1$}

\put(330,0){\dashbox(0,150){}}

\put(200,20){\dashbox(150,0){}}

\put(200,20){\textcolor{red}{\line(1,1){130}}}
\put(260,100){\textcolor{red}{$+\alpha$}}
\put (330,0){\textcolor{red}{\line(1,1){20}}}
\put(310,20){\textcolor{red}{$+\pal-1$}}

\put (260,-30){\Large{$\boldsymbol{R_{\alpha}}$}}

\put(400,0){\line(1,0){150}} \put(400,150){\line(1,0){150}}
\put(400,0){\line(0,1){150}} \put(550,0){\line(0,1){150}}

\put(395,-20){$0$}
\put( 420,-15){$\frac{1}{3}-\pal$}
\put( 450,-15){$\frac{1}{3}$}
\put(500,-15){${\frac{2}{3}}$}
\put(550,-20){$1$}

\put(385,20){$\pal$}
\put(385,50){$\frac{1}{3}$}
\put(375,70){$\frac{1}{3}+\pal$}
\put(385,100){$\frac{2}{3}$}
\put(375,120){${\frac{2}{3}+\pal}$}
\put(385,150){$1$}

\put(430,0){\dashbox(0,150){}}
\put(450,0){\dashbox(0,150){}}
\put(500,0){\dashbox(0,150){}}

\put(400,20){\dashbox(150,0){}}
\put(400,50){\dashbox(150,0){}}
\put(400,70){\dashbox(150,0){}}
\put(400,100){\dashbox(150,0){}}
\put(400,120){\dashbox(150,0){}}

\put(400,120){\textcolor{red}{\line(1,1){30}}}
\put(415,132){\textcolor{red}{$+\frac{2}{3}\pp \pal$}}
\put(430,0){\textcolor{red}{\line(1,1){20}}}
\put(435,20){\textcolor{red}{$+\pal \pmi \frac{1}{3}$}}
\put (450,70){\textcolor{red}{\line(1,1){50}}}
\put(465,90){\textcolor{red}{$+ \pal$}}
\put (500,20){\textcolor{red}{\line(1,1){50}}}
\put(510,30){\textcolor{red}{$-\frac{2}{3} \pp \pal$}}

\put (410,-35){\large{$\boldsymbol{T=R_{\alpha} \circ g}$ is IDOC}}

\end{picture}
} 
\end{figure} \ 

Note that, since $g$ has order $2$, any $f\in G$ can be written as 
$$f= R_\alpha^{p_s} \ g  \  R_\alpha^{p_{s-1}} \  g  \  \cdots \  R_\alpha^{p_1} \  g \  R_\alpha^{p_0}\ , \mbox{ \quad where } p_j \in \Z.$$

\begin{prop} \label{PNex}
The group $G$ is metabelian and non virtually nilpotent. More precisely its commutators subgroup $[G,G]$ is a non finitely generated abelian group whose non trivial elements have order $3$. 
\end{prop}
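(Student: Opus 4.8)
The plan is to analyze the structure of $G = \langle R_\alpha, g\rangle$ via the semidirect product description of $G_3 \simeq (\ietd)^3 \rtimes \mathcal S_3$ given in Remark \ref{cons}, using the normal form $f = R_\alpha^{p_s} g R_\alpha^{p_{s-1}} g \cdots R_\alpha^{p_1} g R_\alpha^{p_0}$. First I would verify that $G \subset G_3$: since $R_\alpha$ has $\sigma_{R_\alpha} = \id$ and $g$ has order $2$ with $\sigma_g$ the transposition $(1,3)$ (say, by reading off the picture), every element of $G$ lies in $G_3$ and the map $f \mapsto \sigma_f$ gives a surjective homomorphism $G \to \langle \sigma_g\rangle \simeq \Z/2\Z$. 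Its kernel $G_0$ is the index-$2$ subgroup of elements with trivial permutation; since the rotation vector gives an isomorphism $G_0 \hookrightarrow (\ietd)^3 \simeq (\mathbb S^1)^3$ restricted to $G$ which is abelian, $G_0$ is abelian. Thus $[G,G] \subseteq G_0$ is abelian and $G$ is metabelian (even $[G,[G,G]] \subseteq [G,G]$ abelian, so the derived series terminates at step $2$).

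Next I would compute $[G,G]$ explicitly. The key computation is to determine the commutator $[R_\alpha, g] = R_\alpha g R_\alpha^{-1} g^{-1}$ as an element $(\beta, \id) \in G_0$: using the composition rule $\alpha_{f\circ h} = \sigma_h(\alpha_f) + \alpha_h$ from Remark \ref{cons}, with $\alpha_{R_\alpha} = (\alpha,\alpha,\alpha)$ (I would double-check the precise vector from the picture — it may be $(\alpha,\alpha,\alpha)$ up to the wrap-around on $\mathbb S_3$) and $\sigma_g = (1,3)$, one gets a rotation vector whose coordinates are integer combinations of $\alpha$ living in $\mathbb S_3 = [0,\tfrac13]/(0 \sim \tfrac13)$. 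The crucial arithmetic point is that $3\alpha \equiv 0$ in $\mathbb S_3$, so every such $\beta$-coordinate is torsion of order dividing $3$; since $\alpha$ is irrational, $\alpha \not\equiv 0$, so the order is exactly $3$. More generally, conjugating $[R_\alpha,g]$ by words in $G$ (the action of $G$ on $G_0 \simeq (\mathbb S^1)^3$ is through $\sigma$, i.e. by permuting the three coordinates and possibly shifting) produces the full normal closure, and I would show $[G,G]$ is generated as a group by $\{g^{k}[R_\alpha,g]g^{-k} : k\}$ together with $[R_\alpha^m, g]$ for $m \in \Z$, all of which are vectors in $(\tfrac13\Z/\tfrac13\Z \cdot \alpha)^3$-type subgroups — hence $[G,G]$ sits inside a group all of whose nontrivial elements have order $3$, so $[G,G]$ itself is an elementary abelian $3$-group.

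Then I would show $[G,G]$ is infinite, hence not finitely generated (an infinite abelian group of finite exponent cannot be finitely generated). For this, note that the elements $[R_\alpha^m, g]$ for $m \in \Z$ give rotation vectors whose nonzero coordinates are (rational multiples of) $m\alpha \bmod \tfrac13$; since $\alpha$ is irrational the set $\{m\alpha \bmod \tfrac13 : m \in \Z\}$ is infinite, so these commutators already yield infinitely many distinct elements of $[G,G]$. Consequently $G$ is not polycyclic, hence not virtually polycyclic, hence not virtually nilpotent — alternatively, one may invoke Corollary \ref{coro3}(2) directly, since $R_\alpha \in G_1 \subset G_3$ is periodic-point-free... but $g \in G_3$, so instead I would note $R_\alpha \in G_1$ is periodic point free and $g \notin G_1$, and Corollary \ref{coro3}(2) applies with $n = 1$ to give non-virtual-nilpotence. (Either route works; I would present the explicit structural argument since it also yields the finer statements about $[G,G]$.)

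The main obstacle I anticipate is the bookkeeping in the commutator computation — correctly reading the rotation vectors $\alpha_{R_\alpha}$ and $\alpha_g$ off the figure (with the subtlety that $R_\alpha$ as drawn is the rotation of $[0,1)$, which as an element of $G_3$ has restrictions to each $I_i$ that are restricted rotations with nontrivial $\sigma$-part only after a possible relabeling, so one must be careful whether $R_\alpha \in G_3$ genuinely has $\sigma_{R_\alpha} = \id$, or whether it is $T = R_\alpha \circ g$ that is the relevant IDOC element) and then chasing integer combinations modulo $\tfrac13$. Once the single identity $3 \cdot [R_\alpha, g] = \id$ in $G_0$ and the infiniteness of $\{m\alpha \bmod \tfrac13\}$ are nailed down, the rest is formal.
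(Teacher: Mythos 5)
Your proposal rests on the claim that $G\subset G_3$, and that claim is false: the rotation $R_\alpha$ with $\alpha$ irrational does \emph{not} preserve the partition $[0,\tfrac13)\cup[\tfrac13,\tfrac23)\cup[\tfrac23,1)$ (it sends $[0,\tfrac13)$ to $[\alpha,\tfrac13+\alpha)$), so $R_\alpha\notin G_3$ and the semidirect-product bookkeeping of Remark \ref{cons} is simply unavailable for $G$. You flag this worry at the end as ``bookkeeping,'' but it is fatal rather than cosmetic: if $G$ really sat inside $G_3$ it would be virtually abelian, contradicting the non-virtual-nilpotence you (correctly) deduce from Corollary \ref{coro3}; so an argument producing an abelian index-$2$ subgroup $G_0$ cannot be right. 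A second concrete error is the ``crucial arithmetic point'' that $3\alpha\equiv 0$ in $\mathbb S_3$: on the circle of length $\tfrac13$ this would force $\alpha\in\tfrac19\mathbb Z$, which is excluded by irrationality. The order-$3$ phenomenon has a different source: commutators kill the $\alpha$-part of the translations, so $[R_\beta,g]$ translates only by multiples of $\tfrac13$ and cyclically permutes the three intervals $[0,\beta)$, $[\tfrac13,\tfrac13+\beta)$, $[\tfrac23,\tfrac23+\beta)$.

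The paper's actual route, which you would need to reconstruct, replaces the $G_3$-structure by the morphism $\ell:G\to\mathbb Z$ defined by writing $f(x)=x+\ell(f)\alpha+\tfrac{p_f(x)}{3}$ (Lemma \ref{L1Nex}). Then $[G,G]\subset\ker\ell$, every element of $\ker\ell$ translates by multiples of $\tfrac13$ so its orbits have at most $3$ points, and on each triple of continuity intervals $J$, $J+\tfrac13$, $J+\tfrac23$ an element of $\ker\ell$ induces a ``local permutation'' in $\mathfrak S_3$, multiplicatively in $f$ (Lemma \ref{L3Nex}). Commutators therefore induce local permutations in $[\mathfrak S_3,\mathfrak S_3]=\mathfrak A_3$, whose nontrivial elements are $3$-cycles; this gives both that nontrivial elements of $[G,G]$ have order $3$ and that they commute (since $\mathfrak A_3$ is abelian), after reducing to commutators of elements of $\ker\ell$ via Lemma \ref{L4Nex}. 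Non-finite-generation then follows as you suggest, from the fact that the $[R_\alpha^n,g]$ are pairwise distinct (distinct break point sets) and a finitely generated abelian torsion group is finite; and the appeal to Corollary \ref{coro3} with $n=1$ for non-virtual-nilpotence is exactly what the paper does. So the endgame of your proposal is sound, but the structural core (containment in $G_3$, abelian index-$2$ subgroup, rotation-vector computation of the commutator) does not work and must be replaced by the $\ell$/local-permutation analysis.
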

\begin{lemm} \label{L1Nex} For any $f \in G$, 
there exists a unique $\ell (f)  \in \mathbb Z$ such that for all $x\in [0,1)$, 
$$f(x) = x + \  \ell (f) \ \alpha \ + \frac{p_f(x)}{3} \mbox{\ , \quad  where } p_f(x) \in \mathbb Z.$$
\end{lemm}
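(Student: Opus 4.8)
The plan is to prove Lemma \ref{L1Nex} by induction on the word length $s$ of a representative of $f$ in terms of the generators $R_\alpha$ and $g$, using the explicit form $f= R_\alpha^{p_s} g R_\alpha^{p_{s-1}} g \cdots R_\alpha^{p_1} g R_\alpha^{p_0}$ noted just before the statement. The base case $f=R_\alpha^{p_0}$ is immediate: $R_\alpha(x)=x+\alpha+\frac{p(x)}{3}$ with $p(x)\in\{0,-1\}$ depending on whether $x<1-\alpha$, since $\alpha\in[0,\frac13)$ and the jump of $R_\alpha$ (as the $3$-IET shown) is by $\alpha-1$; iterating, $R_\alpha^{p_0}(x)=x+p_0\alpha+\frac{p(x)}{3}$ with $p(x)\in\mathbb Z$, so $\ell(R_\alpha^{p_0})=p_0$. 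Likewise $g$ itself satisfies $g(x)=x+0\cdot\alpha+\frac{p_g(x)}{3}$ with $p_g(x)\in\{2,0,-2\}$, so $\ell(g)=0$.

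For the inductive step, suppose $f=R_\alpha^{p_s} g \cdot f'$ where $f'$ has a shorter expression and (by induction) $f'(x)=x+\ell(f')\alpha+\frac{p_{f'}(x)}{3}$. Writing $y=f'(x)$ and then $z=g(y)=y+\frac{p_g(y)}{3}$ and finally $R_\alpha^{p_s}(z)=z+p_s\alpha+\frac{q(z)}{3}$ for some integer-valued $q$, compose: $f(x)=x+(\ell(f')+p_s)\alpha+\frac{p_{f'}(x)+p_g(y)+q(z)}{3}$. The coefficient of $\alpha$ is the integer $\ell(f')+p_s$, and the remaining term is $\frac{1}{3}$ times an integer-valued function of $x$; this gives existence with $\ell(f)=\ell(f')+p_s$. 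One should remark that more conceptually this just says $\ell:G\to\mathbb Z$ is the group homomorphism sending $R_\alpha\mapsto 1$, $g\mapsto 0$ — it is well-defined because the translation set of $G$ lies in $\langle 1,\alpha\rangle_{\mathbb Q}\oplus\frac13\mathbb Z$, or rather in $\mathbb Z\alpha+\frac13\mathbb Z$, and the coefficient of $\alpha$ in an element $t(x)$ of the translation set is locally constant hence constant along... actually the cleanest phrasing is: every translation value of $f$ lies in $\mathbb Z\alpha+\frac13\mathbb Z$, and since $\alpha$ is irrational and $1\in\frac13\mathbb Z$, the decomposition $v=\ell\alpha+\frac{p}{3}$ of such a value $v$ is unique; one then checks $\ell$ does not depend on $x$.

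The uniqueness of $\ell(f)$ is the easy part: if $\ell\alpha+\frac{p}{3}=\ell'\alpha+\frac{p'}{3}$ with $\ell,\ell',p,p'\in\mathbb Z$, then $(\ell-\ell')\alpha=\frac{p'-p}{3}\in\mathbb Q$, forcing $\ell=\ell'$ (and $p=p'$) by irrationality of $\alpha$. The only genuine content is therefore that the coefficient of $\alpha$ is the \emph{same} integer for all $x\in[0,1)$, i.e. independent of which continuity interval of $f$ contains $x$. The induction above delivers this automatically because at each stage the coefficient of $\alpha$ produced is $p_s$ (a fixed integer) plus the inductively-constant $\ell(f')$, with all the $x$-dependence absorbed into the $\frac13\mathbb Z$ term.

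The main obstacle — really the only thing to be careful about — is bookkeeping the $\frac13\mathbb Z$ part through composition: one must check that composing two maps whose translations lie in $\mathbb Z\alpha+\frac13\mathbb Z$ again has translations in $\mathbb Z\alpha+\frac13\mathbb Z$, which is clear since this set is a subgroup of $\mathbb R$ and $f(x)-x=(f(x)-f'(x))+(f'(x)-x)$ with $f(x)-f'(x)$ a translation value of $R_\alpha^{p_s}g$ evaluated at $f'(x)$; and that the relevant partitions are finite so that $p_f:[0,1)\to\mathbb Z$ is a genuine (finitely-valued, piecewise constant) function. Both are routine given Properties \ref{proBP} and the fact that $R_\alpha, g\in\mathcal G_{\alpha}$ in the notation of the Notations box (indeed $R_\alpha,g$ have translations in $\langle 1,\alpha\rangle_{\mathbb Q}$, and more precisely in $\mathbb Z\alpha+\frac13\mathbb Z$ because of the $\frac13$-scale of $G_3$).
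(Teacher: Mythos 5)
Your proof is correct and follows essentially the same route as the paper: induction on the word length of $f$ in the generators $\{R_\alpha,g\}$, tracking that every translation value lies in $\mathbb Z\alpha+\frac{1}{3}\mathbb Z$ and getting uniqueness of $\ell(f)$ from the irrationality of $\alpha$; grouping the powers of $R_\alpha$ rather than peeling off one letter at a time, as the paper does, is only a cosmetic difference. One small slip in your base case: the wrap-around branch $x\mapsto x+\alpha-1$ gives $p(x)=-3$, not $-1$, but since only $p(x)\in\mathbb Z$ matters this is harmless.
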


\begin{proof} We argue by induction on the length $L_S(f)$ of $f\in G$ as a word in $S=\{g,R_\alpha\}$.

Indeed, if $L_S(f)=1$ then either $f=g$ or $f= R_{\pm \alpha}$ and the property holds. 

\smallskip

Let $n\in \N^*$, suppose that $G$-elements of length less or equal than $n$ have the required property. Consider $f\in G$ with $L_S(f)=n+1$, then either $f= R_{\pm \alpha} f_0$ or $f= g f_0$, where $L_S(f_0)=n$. So, for any $x\in [0,1)$, we have either
$$f(x) = f_0 (x) \pm \alpha + \epsilon  \mbox{\quad  \ or \  \quad}  f(x) = f_0 (x) + \epsilon \ \frac{2}{3} \ , \qquad \mbox{ where } \epsilon\in \{-1,0,1\}.$$
Therefore, either  $$f(x) = x + \  (\ell (f_0)\pm 1)\ \alpha \ + \frac{p_{f_0}(x)+ {3\epsilon}}{3}  \mbox{\quad  or \quad } 
f(x) =  x +  \ \ell (f_0) \ \alpha \ + \frac{p_{f_0}(x)+ 2 \epsilon}{3}.$$
In both cases, $f$ has the required form and $\ell(f)$ is unique since $\alpha$ is irrational.
\end{proof}

\begin{lemm} \label{L2Nex} \ 
\begin{enumerate}
\item The map $\ell : G \to \mathbb Z$ is a morphism.
\item Any $f\in G$ can be written as $f= P_f \ R_{\alpha} ^{\ell(f)}=R_{\alpha}^{\ell(f)} \ Q_f$ with $P_f,\ Q_f \in \ker \ell$.
\item The $\ker \ell$-orbit of any $x\in [0,1)$ consists in at most $3$ points. In particular, any $P\in \ker \ell$ is of finite order $1,2$ or $3$. 
\item The group $\ker \ell$ contains $[G,G]$ and $\ker \ell=\boldsymbol{G_{\textsf{\bf per}}}=\{f\in G : \exists m\in \N^*, f^m=\id  \}$.
\item For any $\beta\in [0,1)$, the map $[R_\beta,g]$ has order $3$. \  \footnote{This will be used for maps in $G$, that is for $\beta=n\alpha \mod 1$.}
\end{enumerate}
\end{lemm}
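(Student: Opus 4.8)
Items (1) and (2) are formal consequences of Lemma \ref{L1Nex}. For (1), given $f,h\in G$ and $x\in[0,1)$, applying Lemma \ref{L1Nex} twice gives $f(h(x))=x+(\ell(f)+\ell(h))\,\alpha+\tfrac13\big(p_h(x)+p_f(h(x))\big)$; since $\alpha$ is irrational, the uniqueness clause of Lemma \ref{L1Nex} forces $\ell(f\circ h)=\ell(f)+\ell(h)$, so $\ell$ is a morphism, and reading off the translations of $R_\alpha$ and $g$ shows $\ell(R_\alpha)=1$ and $\ell(g)=0$. For (2), I would set $P_f:=f\circ R_\alpha^{-\ell(f)}$ and $Q_f:=R_\alpha^{-\ell(f)}\circ f$; then $f=P_fR_\alpha^{\ell(f)}=R_\alpha^{\ell(f)}Q_f$, and by (1) together with $\ell(R_\alpha)=1$ one gets $\ell(P_f)=\ell(Q_f)=0$, i.e.\ $P_f,Q_f\in\ker\ell$.

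Next I would settle (5) by a direct computation: $[R_\beta,g]=R_\beta gR_\beta^{-1}g^{-1}$ is an IET with finitely many continuity intervals, on each of which it is a translation by an element of $\tfrac13\Z$ (the two occurrences of $R_\beta^{\pm1}$ make the $\beta$-parts cancel up to an integer). One checks on this explicit form that $[R_\beta,g]^3=\id$, and that $[R_\beta,g]\neq\id$ whenever $\beta$ is irrational --- the relevant case being $\beta=n\alpha\bmod 1$ with $n\neq 0$ --- because then $R_\beta$ does not preserve the partition of $[0,1)$ into the three intervals $I_1,I_2,I_3$ whereas $g$ permutes its blocks, so the two maps cannot commute. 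Hence $[R_\beta,g]$ has order exactly $3$.

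For (3), if $P\in\ker\ell$ then Lemma \ref{L1Nex} gives $P(x)-x=\tfrac{p_P(x)}{3}\in\tfrac13\Z$ for all $x$, hence $P^n(x)-x\in\tfrac13\Z$ for every $n$, so the $\langle P\rangle$-orbit of $x$ lies inside $T_x:=\big(x+\tfrac13\Z\big)\cap[0,1)$, a set of exactly three points, one in each of $I_1,I_2,I_3$; this proves the orbit bound and shows $P^6=\id$. To upgrade this to order $1$, $2$ or $3$ I would use two observations. First, $\ker\ell=[G,G]\cup g[G,G]$: indeed $[G,G]\subseteq\ker\ell$ since $\Z$ is abelian, and the morphism induced by $\ell$ on $G/[G,G]$ --- which is generated by the class of $R_\alpha$, of value $1$, and the class of $g$, of value $0$ and order dividing $2$ --- has kernel generated by the class of $g$, hence of order at most $2$. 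Second, each $T_x$ is invariant under every element of $\ker\ell$, so restriction to $T_x$ defines a homomorphism $\rho\colon\ker\ell\to\mathrm{Sym}(T_x)\cong S_3$; inspecting $g$ shows $\rho(g)$ is always the transposition exchanging the points of $T_x$ lying in $I_1$ and in $I_3$, and since $[G,G]$ is the normal closure in $G$ of $[R_\alpha,g]$, which has order $3$ by (5), every generator $w[R_\alpha,g]^{\pm1}w^{-1}$ of $[G,G]$ (an element of $[G,G]\subseteq\ker\ell$) has order $3$, so its $\rho$-image has order dividing $3$, i.e.\ lies in the alternating group $A_3$; therefore $\rho([G,G])\subseteq A_3$. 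Combining: if $P\in[G,G]$ then $\rho(P)\in A_3$ on every $T_x$, so $P^3=\id$; if $P\in g[G,G]$ then $\rho(P)$ is a transposition times an element of $A_3$, hence a transposition, on every $T_x$, so $P^2=\id$. Either way $P$ has order $1$, $2$ or $3$.

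Finally (4): $[G,G]\subseteq\ker\ell$ was noted above; if $f^m=\id$ with $m\in\N^*$ then $m\,\ell(f)=\ell(f^m)=0$ forces $\ell(f)=0$, so $G_{\textsf{per}}\subseteq\ker\ell$, and conversely (3) shows every element of $\ker\ell$ is torsion, so $\ker\ell\subseteq G_{\textsf{per}}$; hence $\ker\ell=G_{\textsf{per}}$. The one step requiring genuine care is the order refinement in (3): the orbit bound alone yields only $P^6=\id$, and ruling out order $6$ really does use both the coset decomposition $\ker\ell=[G,G]\cup g[G,G]$ and the order-$3$ fact (5), transported onto the three-point fibers $T_x$ through the restriction homomorphisms $\rho$.
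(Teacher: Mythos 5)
Items (1), (2) and (4) are correct and essentially identical to the paper's arguments. The two remaining items call for comment: one where your write-up falls short, and one where it actually improves on the paper.

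The shortfall is item (5). Knowing that $[R_\beta,g]$ is piecewise a translation by elements of $\tfrac13\Z$ only yields, by the same orbit argument you use in (3), that its order divides $6$; nothing so far separates order $3$ from order $2$ or $6$ (note that $g$ itself is piecewise translation by multiples of $\tfrac13$ and has order $2$, and $[R_\beta,g]=(R_\beta gR_\beta^{-1})\cdot g$ is a product of two involutions, so no soft structural argument is available). The sentence ``one checks on this explicit form that $[R_\beta,g]^3=\id$'' therefore asserts precisely what item (5) claims, and that computation is the entire content of the item. The paper carries it out: for $\beta\in(0,\tfrac13)$ it tracks the six intervals $[\tfrac{i}{3},\tfrac{i}{3}+\beta)$ and $[\tfrac{i}{3}+\beta,\tfrac{i+1}{3})$ through $R_\beta\, g\, R_{-\beta}\, g$ and finds that the commutator is supported on $[0,\beta)\sqcup[\tfrac13,\tfrac13+\beta)\sqcup[\tfrac23,\tfrac23+\beta)$, which it permutes cyclically, whence order exactly $3$; the ranges $\beta\in(\tfrac23,1)$ and $\beta\in(\tfrac13,\tfrac23)$ are then reduced to this case. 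Your non-triviality argument is fine (commutation would force $R_\beta$ to preserve $\fix(g)=[\tfrac13,\tfrac23)$, hence $\beta=0$), but since your proof of (3) invokes (5), the explicit verification cannot be skipped. (Also, as stated the claim fails for $\beta=0$, where the commutator is trivial; the footnote's intended range is $\beta=n\alpha \bmod 1$ with $n\neq 0$.)

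On item (3) you are more careful than the paper, which passes directly from ``every $\ker\ell$-orbit has at most $3$ points'' to ``every $P\in\ker\ell$ has order $1$, $2$ or $3$''; as you observe, the orbit bound alone gives only $P^6=\id$, since a $2$-cycle on one fiber $T_x$ and a $3$-cycle on another would produce order $6$. Your repair --- the coset decomposition $\ker\ell=[G,G]\cup g[G,G]$, the restriction morphisms $\rho$ to the invariant triples $T_x$, and the inclusion $\rho([G,G])\subseteq \mathfrak A_3$ because $[G,G]$ is normally generated by the order-$3$ element $[R_\alpha,g]$ while $\rho(g)$ is a transposition on every $T_x$ --- is correct and genuinely closes this step; it is in the same spirit as the local-permutation argument the paper deploys later in the proof of Proposition \ref{PNex}.
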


\begin{proof} \ 

\begin{enumerate}
\item We obviously have $\ell(\id) = 0$ and $\ell (R_{\alpha})=1$. Given $f,h \in G$ and $x\in [0,1)$, the computation of $hf(x)$ leads to   
$$hf(x)= x +  \ (\ell (f) +\ell (h))\  \alpha \  + \frac{p_f(x)+p_h(f(x))}{3}$$
Therefore $$\ell(hf)=\ell (f) +\ell (h) \mbox{ \quad and \quad} p_{hf}(x)=p_f(x)+p_h(f(x))$$
In particular, $\ell$ is a morphism from $G$ to $(\Z,+)$.

\medskip

\item Let $f\in G$, one has $\ds \ell\left(f R_{\alpha} ^{-\ell(f)}\right) = \ell (f) +\ell (R_{\alpha} ^{-\ell(f)})=\ell (f) -\ell(f)=0$. Then \break
{$P_f =f \ R_{\alpha} ^{-\ell(f)} \in \ker \ell$ and $f= P _f \ R_{\alpha} ^{\ell(f)}$. \hfill\break Similarly, we get $Q_f= R_{\alpha} ^{-\ell(f)} \ f \in \ker \ell$ and $f= R_{\alpha} ^{\ell(f)} \ Q_f$.}

\medskip

\item Let  $x\in [0,1)$, the $\ker \ell$-orbit of $x$ is the set $\left\{f(x) \ | \ f\in \ker \ell  \right\}$ and it is included in $S= \left\{ x+ \frac{p}{3}, \ p \in \Z\right\} \cap [0,1)$ which has cardinality $3$ since $S\subset [0,1)$ and the distance between two distinct elements of $S$ is $\frac{1}{3}$ or  $\frac{2}{3}$.

\medskip

\item Since $\ell$ is a morphism to an abelian group, its kernel contains $[G,G]$. In addition, let $f\in G_{\textsf{per}}$ of order $m\in \N^*$ and $x\in [0,1)$.  One has $$f^m(x) = x+ \ m \ell(f) \ \alpha \ + \frac{p_{f^m}(x)}{3}= x$$ By irrationality of $\alpha$, we get $\ell(f)=0$ that is $f\in \ker \ell$.

\medskip

\item Let  $\beta \in [0, \frac{1}{3})$, we compute the map $[R_\beta,g]= R_{\beta} \ g \ R_{-\beta} \ g$.

$$\boldsymbol{[0, \beta)} \ra ^{\sct g}_{\sct +\frac{2}{3}} [\frac{2}{3}, \frac{2}{3}+\beta) \ra ^{R_{-\beta}}_{\sct -\beta} [\frac{2}{3}-\beta, \frac{2}{3}) \ra ^{\sct g}_{\sct +0}[\frac{2}{3}-\beta, \frac{2}{3}) \ra ^{R_{\beta}}_{\sct \beta} \boldsymbol{[\frac{2}{3}, \frac{2}{3}+\beta)}$$
$$\boldsymbol{[\beta, \frac{1}{3})} \ra ^{\sct g}_{\sct +\frac{2}{3}} [ \frac{2}{3}+\beta, 1) \ra ^{R_{-\beta}}_{\sct -\beta} [\frac{2}{3},1-\beta) \ra ^{\sct g}_{\sct -\frac{2}{3}}[0,\frac{1}{3} -\beta) \ra ^{R_{\beta}}_{\sct \beta} \boldsymbol{[\beta, \frac{1}{3})} \qquad \ $$
$$\boldsymbol{[\frac{1}{3}, \frac{1}{3}+\beta)} \ra ^{\sct g}_{\sct 0} [\frac{1}{3}, \frac{1}{3}+\beta) \ra ^{R_{-\beta}}_{\sct -\beta} [ \frac{1}{3}-\beta, \frac{1}{3}) \ra ^{\sct g}_{\sct +\frac{2}{3}}[1 -\beta, 1) \ra ^{R_{\beta}}_{\sct \beta-1} \boldsymbol{[0,\beta)} \ $$
$$\boldsymbol{[\frac{1}{3}+\beta, \frac{2}{3})} \ra ^{\sct g}_{\sct 0} [\frac{1}{3}+\beta, \frac{2}{3}) \ra ^{R_{-\beta}}_{\sct -\beta} [ \frac{1}{3}, \frac{2}{3}-\beta) \ra ^{\sct g}_{\sct 0} [ \frac{1}{3}, \frac{2}{3}-\beta)  \ra ^{R_{\beta}}_{\sct \beta} \boldsymbol{ [ \frac{1}{3}+\beta, \frac{2}{3}) } $$
$$\boldsymbol{[\frac{2}{3}, \frac{2}{3}+\beta)} \ra ^{\sct g}_{\sct -\frac{2}{3}} [0, \beta) \ra ^{R_{-\beta}}_{\sct 1-\beta} [1-\beta,1) \ra ^{\sct g}_{\sct -\frac{2}{3}}[\frac{1}{3} -\beta, \frac{1}{3}) \ra ^{R_{\beta}}_{\sct \beta} \boldsymbol{[\frac{1}{3},\frac{1}{3}+\beta)} \qquad \ $$
$$\boldsymbol{[\frac{2}{3}+\beta, 1)} \ra ^{\sct g}_{\sct -\frac{2}{3}} [\beta, \frac{1}{3}) \ra ^{R_{-\beta}}_{\sct -\beta} [ 0, \frac{1}{3}-\beta) \ra ^{\sct g}_{\sct +\frac{2}{3}} [ \frac{2}{3}, 1-\beta)  \ra ^{R_{\beta}}_{\sct \beta} \boldsymbol{[\frac{2}{3}+\beta, 1) }\qquad \ $$
\end{enumerate}

In conclusion, $[R_\beta,g]$ has support $[0, \beta) \sqcup [\frac{1}{3}, \frac{1}{3}+\beta) \sqcup [\frac{2}{3}, \frac{2}{3}+\beta)$ and its restriction is periodic of exact period $3$. Therefore, $[R_\beta,g]$ has order $3$.

\smallskip

The case $\beta \in (\frac{2}{3},1)$ can be deduced from the previous case by noting that $\beta'=1-\beta \in (0,\frac{1}{3})$ and $R_\beta = R_{\beta'}^{-1}$.

\smallskip

Finally, for the case $\beta \in (\frac{1}{3},\frac{2}{3})$, as $C=[R_\beta,g]$ has order $1$, $2$ or $3$, it suffices to check that the $C$-orbit of $0$ has cardinality more than $2$. This is provided by the computation of $C(0) =\frac{1}{3}$ and $C(\frac{1}{3}) =\frac{2}{3}$. \end{proof}

\begin{lemm}  \label{L4Nex} \ 
\begin{enumerate}
\item The group $\ker \ell$ is generated by $\{ R_\alpha ^ng  R_\alpha^{-n},  n \in \mathbb Z \}$.
\item The group $[G,G]$ is generated by $\{ [R_\alpha ^n,  g ],  n \in \mathbb Z \}$.
\item The group $[G,G]$ coincides with $[\ker \ell,\ker \ell]$.
\end{enumerate}
\end{lemm}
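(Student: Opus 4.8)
The plan is to work throughout with the conjugates $g_n := R_\alpha^{\,n}\, g\, R_\alpha^{\,-n}$ for $n\in\Z$ (so $g_0=g$), recording at the outset that $g_n^2=\id$ since $g^2=\id$, that $g_n\in\ker\ell$ since $\ell$ is a homomorphism vanishing on $g$, and the two formal identities $g\,R_\alpha^{\,m}=R_\alpha^{\,m}\,g_{-m}$ and $[R_\alpha^{\,n},g]=R_\alpha^{\,n}gR_\alpha^{\,-n}g^{-1}=g_n g_0$. With these in hand, part (1) follows by putting an arbitrary $f=R_\alpha^{p_s}\,g\,R_\alpha^{p_{s-1}}\,g\cdots g\,R_\alpha^{p_0}$ into a normal form: reading the word from the right and repeatedly applying $g\,R_\alpha^{\,m}=R_\alpha^{\,m}g_{-m}$ to slide every power of $R_\alpha$ to the left, one obtains $f=R_\alpha^{\,c_s}\,g_{-c_{s-1}}\cdots g_{-c_0}$ with $c_k=p_0+\cdots+p_k$ and $c_s=\ell(f)$. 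If $f\in\ker\ell$ the leading rotation disappears and $f$ is a word in the $g_n$; together with the obvious reverse inclusion this gives $\ker\ell=\langle g_n:n\in\Z\rangle$.

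For part (2) I would set $N=\langle[R_\alpha^{\,n},g]:n\in\Z\rangle$ and prove $N=[G,G]$ by showing $N\lhd G$ and $G/N$ abelian. Normality reduces to conjugating the generators of $N$ by $R_\alpha^{\pm1}$ and by $g$: a short computation gives $R_\alpha[R_\alpha^{\,n},g]R_\alpha^{-1}=[R_\alpha^{\,n+1},g]\,[R_\alpha,g]^{-1}\in N$ (and similarly for $R_\alpha^{-1}$), and, using $g^{-1}=g$, $g[R_\alpha^{\,n},g]g^{-1}=[R_\alpha^{\,n},g]^{-1}\in N$. Since $[R_\alpha,g]\in N$, the images of the two generators of $G$ commute in $G/N$, so $G/N$ is abelian; combined with the trivial inclusion $N\subseteq[G,G]$ this yields $[G,G]=N$. (One could instead deduce this from part (1) via $G=\ker\ell\rtimes\langle R_\alpha\rangle$, but the direct argument is shorter.)

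For part (3), the inclusion $[\ker\ell,\ker\ell]\subseteq[G,G]$ is automatic, so by part (2) it remains to place each generator $[R_\alpha^{\,n},g]=g_n g_0$ of $[G,G]$ inside $[\ker\ell,\ker\ell]$. Here I would invoke Lemma~\ref{L2Nex}(5) with $\beta=n\alpha\bmod 1$, which says $[R_\alpha^{\,n},g]=g_n g_0$ has order $3$; since $(g_0g_n)=(g_ng_0)^{-1}$ we also get $(g_0 g_n)^3=\id$, and together with $g_0^2=g_n^2=\id$ this forces the braid-type relation $g_0 g_n g_0 g_n=(g_0 g_n)^{-1}=g_n g_0$. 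Consequently $[g_0,g_n]=g_0 g_n g_0^{-1}g_n^{-1}=g_0 g_n g_0 g_n=g_n g_0=[R_\alpha^{\,n},g]$, and since $g_0,g_n\in\ker\ell$ this exhibits each generator of $[G,G]$ as a commutator of two elements of $\ker\ell$, giving $[G,G]\subseteq[\ker\ell,\ker\ell]$ and hence equality.

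I expect the only genuinely substantive step to be this last identification in part (3): a priori $[R_\alpha^{\,n},g]$ is a commutator built from $R_\alpha$, which is \emph{not} in $\ker\ell$, and it is the order-$3$ information of Lemma~\ref{L2Nex}(5) — equivalently, the observation that $g_0$ and $g_n$ generate a quotient of the symmetric group $S_3$, in which a $3$-cycle is a commutator — that lets one rewrite it purely inside $\ker\ell$. Everything else (the normal form of part (1), the normality computations of part (2)) is routine manipulation using only $g^2=\id$ and the fact that $\ell$ is a homomorphism.
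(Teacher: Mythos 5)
Your proof is correct, and parts (1) and (3) follow essentially the same route as the paper: the same normal form via partial sums of the exponents for (1), and for (3) exactly the same key identity $[g,g_n]=(gg_n)^2=[R_\alpha^n,g]^{-2}=[R_\alpha^n,g]$ coming from $g^2=g_n^2=\id$ together with the order-$3$ statement of Lemma \ref{L2Nex}(5). Where you genuinely diverge is part (2). The paper proves $[G,G]\subseteq\langle[R_\alpha^n,g]\rangle$ by a parity-counting argument: it shows, using the identity $g_pg_q=[R_\alpha^p,g][R_\alpha^q,g]^{-1}$, that every commutator $[a,b]$ decomposes as a product of an \emph{even} number of conjugates $g_n$, which requires tracking that $P_c$, $Q_c$, their inverses and their $R_\alpha^m$-conjugates all use the same number of $g$'s. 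Your argument instead shows directly that $N=\langle[R_\alpha^n,g]:n\in\Z\rangle$ is normal (by conjugating generators by $R_\alpha^{\pm1}$ and $g$, using $g_{n+1}g_1=[R_\alpha^{n+1},g][R_\alpha,g]^{-1}$ and $g_0g_n=(g_ng_0)^{-1}$) and that $G/N$ is abelian because the images of the two generators commute. This is shorter, avoids the somewhat delicate bookkeeping of the paper's even-length decomposition, and is the standard "normal closure of $[x,y]$ for a two-generator group" argument; the paper's version has the mild advantage of producing an explicit even-length word in the $g_n$ for each commutator, which it reuses implicitly when passing to part (3). Both are valid; no gap.
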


\begin{proof} \ 
\begin{enumerate}
\item It is plain that the maps $ R_\alpha ^ng  R_\alpha^{-n}, \ n\in \Z$ belong to $\ker \ell$. We previously noted that any $f\in G$ can be decomposed as 
$$f= R_\alpha^{p_s} g  R_\alpha^{p_{s-1}} g \cdots R_\alpha^{p_1}  g R_\alpha^{p_0},\mbox{ where } p_j \in \Z, $$
and then we can write: 
$$f= R_\alpha^{p_s} g  \boldsymbol{R_\alpha^{-p_s}}  \ \boldsymbol{R_\alpha}^{\boldsymbol{p_s} + p_{s-1}} g R_\alpha^{p_{s-2}}\cdots R_\alpha^{p_1}  g R_\alpha^{p_0}$$

Iterating this process, we get: 
$$f= R_\alpha^{p_s} g   R_\alpha^{-p_s}  \ \ R_\alpha^{p_s+ p_{s-1}} g R_\alpha^{-(p_s+ p_{s-1})}\ \cdots \ R_\alpha^{p_s+ p_{s-1}+ \cdots p_1}  g  R_\alpha^{-(p_s+ p_{s-1}+ \cdots p_1)}   \ \ R_\alpha^{ \sum_{j=0}^s p_j}$$

Moreover, $f\in \ker \ell$ if and only if $\sum_{j=0}^s p_j=0$, thus any $f\in \ker \ell$ is the product of elements of the form $R_\alpha^{p} g   R_\alpha^{-p}$. 

\medskip

\item We first note that for all $p,q \in \Z$, we have 
$$(*) \quad R_\alpha^{p} g   R_\alpha^{-p} \ \ R_\alpha^{q} g   R_\alpha^{-q} = R_\alpha^{p} g   R_\alpha^{-p} \boldsymbol{g}  \ \ \boldsymbol{g}  R_\alpha^{q} g   R_\alpha^{-q} =[R_\alpha^{p}, g ] \ \ [R_\alpha^{q}, g ]^{-1}.$$

Next, we claim that any element of $[G,G]$ can be written as a product of an even number of  $R_\alpha^{n} g R_\alpha^{-n}$   so as a product of $[R_\alpha^{p}, g ] $ and $[R_\alpha^{q}, g ]^{-1}$ by $(*)$.

Indeed, it suffices to prove this property for a commutator $f=[a,b]$ with $a,b\in G$. By Lemma \ref{L2Nex} (2), $f$ can be written as  $$f=P_a R_\alpha^{\ell(ab)} Q_b Q_a ^{-1}  R_\alpha^{-\ell(ab)}  P_b^{-1} = P_a \ R_\alpha^{\ell(ab)} Q_b \boldsymbol{R_\alpha^{-\ell(ab)}  \ R_\alpha^{\ell(ab)}} Q_a ^{-1}  R_\alpha^{-\ell(ab)} \  P_b^{-1}$$
According to Item $(1)$,  $P_a, P_b, Q_a$ and  $Q_b$ can be written as a product of $R_\alpha^{n} g  R_\alpha^{-n}$. Moreover,   for any $c \in G$, $Q_c = R_\alpha^{-\ell(c)} P_c  R_\alpha^{-\ell(c)}$ so the maps $P_c$, $P_c^{-1}$, $Q_c$, $Q_c^{-1}$ and their conjugates by $R_\alpha^{m}$ ($m\in \mathbb Z$) can be decomposed as products of the same number of $R_\alpha^{n} g   R_\alpha^{-n}$. Finally, $f$ can be written as a product of an even number of  $R_\alpha^{n} g   R_\alpha^{-n}$.

\medskip

\item By the previous point, showing that $[G,G] \subset [\ker \ell,\ker \ell]$ reduces to prove that $[R_\alpha^{n}, g ] \in  [\ker \ell,\ker \ell]$ for any $n\in \Z$ and we claim that $\ds \boldsymbol{[R_\alpha^{n}, g ]= [g, R_\alpha^{n} g R_\alpha^{-n}]}$. 

Indeed, noting that if $a^2= b^2= \id$ then $[a,b]= (ab) ^2$ and that $g$ and $R_\alpha^{n} g R_\alpha^{-n}$ have order 2, we get  $\ds [g, R_\alpha^{n} g R_\alpha^{-n}]= (g R_\alpha^{n} g R_\alpha^{-n}) ^2 = [g, R_\alpha^{n}]^2 =  [R_\alpha^{n},g]^{-2}$.

In addition, by Lemma \ref{L2Nex} (5), the map $\ds [R_\alpha^{n},g]=[R_{n\alpha},g]$ has order $3$ then $\ds [g, R_\alpha^{n} g R_\alpha^{-n}]= [R_\alpha^{n},g]$.

\end{enumerate}
\vskip -6mm
\end{proof}

\begin{lemm}  \label{L3Nex} \ 
\begin{enumerate}
\item Let $f\in \ker \ell$ and $J\subset [0,\frac{1}{3})$ such that $J$, $J + \frac{1}{3}$ and $J + \frac{2}{3}$  are continuity intervals of $f$ then there exists $\sigma=\sigma_{f,J} \in \mathfrak{S}_3$, the group of permutations of the set $\{0,1,2\}$ such that $f(J + \frac{i}{3}) = J + \frac{\sigma(i)}{3}$.
\item {Let $f,h\in \ker \ell$ and $J\subset [0,\frac{1}{3})$ such that $J$, $J + \frac{1}{3}$ and $J + \frac{2}{3}$  are continuity intervals of $f$ and $h$  then  $J$, $J + \frac{1}{3}$ and $J + \frac{2}{3}$  are continuity intervals of $fh$ and $$\sigma_{fh,J} = \sigma_{f,J} \circ \sigma_{h,J}$$}
\end{enumerate}
\end{lemm}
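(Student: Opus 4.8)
The plan is to prove both items by analysing how elements of $\ker\ell$ act on the triple of intervals $J$, $J+\frac13$, $J+\frac23$, using the description obtained in Lemma \ref{L2Nex} (3), namely that the $\ker\ell$-orbit of any point $x$ is contained in $\{x+\frac p3 : p\in\Z\}\cap[0,1)$, which has exactly three elements $x$, $x+\frac13 \bmod 1$, $x+\frac23\bmod 1$.

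First I would prove Item (1). Fix $f\in\ker\ell$ and $J\subset[0,\frac13)$ as in the statement. Since $f$ is continuous on each of $J$, $J+\frac13$, $J+\frac23$ and its restriction to each such interval is a translation, for $x\in J$ the point $f(x)$ lies in the $\ker\ell$-orbit of $x$, hence in $\{x, x+\frac13, x+\frac23\}$ (indices mod $1$); moreover the translation constant is constant on $J$, so there is a fixed $i_0\in\{0,1,2\}$ with $f(J)=J+\frac{i_0}{3}$. The same argument applied to $J+\frac13$ and $J+\frac23$ gives indices $i_1,i_2$, and I define $\sigma(i)=i_i$ (more precisely $\sigma(i)$ is the unique element of $\{0,1,2\}$ with $f(J+\frac i3)=J+\frac{\sigma(i)}{3}$). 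It remains to check $\sigma$ is a bijection of $\{0,1,2\}$: since $f$ is injective and the three translates are pairwise disjoint with union of measure $3|J|$, their images are pairwise disjoint translates among $\{J,J+\frac13,J+\frac23\}$, forcing $\sigma$ to be a permutation. One should also note $f$ continuous on the three translates of $J$ implies $f$ maps each one onto a full translate (no splitting), which is exactly the disjointness/measure count.

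Next I would prove Item (2). Since $J,J+\frac13,J+\frac23$ are continuity intervals of $h$, Item (1) gives $h(J+\frac i3)=J+\frac{\sigma_{h,J}(i)}{3}$; in particular $h$ maps each of the three translates onto another of the three translates, and since $f$ is continuous on all three, $f$ is continuous on $h(J+\frac i3)$, so $fh$ is continuous on each $J+\frac i3$; hence $J,J+\frac13,J+\frac23$ are continuity intervals of $fh\in\ker\ell$ and Item (1) applies to $fh$. Then for $x\in J+\frac i3$ we compute $fh(x)=f(h(x))$, where $h(x)\in J+\frac{\sigma_{h,J}(i)}{3}$, so $f(h(x))\in J+\frac{\sigma_{f,J}(\sigma_{h,J}(i))}{3}$; by uniqueness of the index in Item (1) this means $\sigma_{fh,J}(i)=\sigma_{f,J}(\sigma_{h,J}(i))$, i.e. $\sigma_{fh,J}=\sigma_{f,J}\circ\sigma_{h,J}$.

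The only mildly delicate point — the ``main obstacle'' — is the claim in Item (1) that $f$ sends each translate $J+\frac i3$ onto a \emph{full} translate rather than splitting it across two of them; this is where continuity on $J+\frac i3$ is essential (translation by a single constant on that interval) together with the fact, from Lemma \ref{L2Nex} (3), that the image of each endpoint must land in the size-$3$ orbit set, pinning the translation constant to a multiple of $\frac13$. Everything else is a routine bookkeeping of disjointness and the cocycle identity for permutation assignments. I would present Item (1) first with this measure/disjointness argument spelled out, then deduce Item (2) as a short composition computation.
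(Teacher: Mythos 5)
Your proof is correct and takes essentially the same route as the paper's: both derive Item (1) from the fact that an element of $\ker \ell$ translates by a multiple of $\frac{1}{3}$, constant on each continuity interval, together with the constraint that the image stays in $[0,1)$, and both obtain Item (2) by the same composition computation. The only cosmetic differences are that you invoke Lemma \ref{L2Nex}(3) (the orbit description) instead of the formula of Lemma \ref{L1Nex} with $\ell(f)=0$, and you spell out the injectivity/disjointness argument making $\sigma_{f,J}$ a bijection, which the paper leaves implicit.
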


\begin{proof} \ 
\begin{enumerate}
\item Let $f\in \ker \ell$, since $J\subset [0,\frac{1}{3})$ is a continuity interval of $f$, the map $p_f$ is constant on $J$ equal to some $p\in \Z$ and $f(J) =\{ x+ \frac{p}{3}, x\in J\} = J + \frac{p}{3}$ is contained in $[\frac{p}{3},\frac{p+1}{3})\cap [0,1)$. Therefore $p\in \{0,1,2\}$ and $f(J)$ is either $J$, $J + \frac{1}{3}$ or $J + \frac{2}{3}$.

Analogously, for $i=1,2$, we get that $f(J+\frac{i}{3})$ is either $J$, $J + \frac{1}{3}$ or $J + \frac{2}{3}$. This defines the required $\sigma \in \mathfrak{S}_3$.

\medskip

\item Let $f,h \in \ker \ell$ and $J\subset [0,\frac{1}{3})$ such that  $f$ and $ h$ are continuous on $J$, $J + \frac{1}{3}$, $J + \frac{2}{3}$. For any $i\in\{0,1,2\}$, we have that $h$ is continuous on $J+\frac{i}{3}$ and 
$$fh(J+\frac{i}{3}) = f \left( J+ \frac{ \sigma_{h,J}(i)}{3} \right )=  J+ \frac{ \sigma_{f,J}\left(\sigma_{h,J}(i)\right)}{3}$$ since $f$ is continuous on the intervals $J+\frac{j}{3}$.
\end{enumerate}

\vskip -0.6cm \end{proof}

\noindent{\textbf{Proof of Proposition \ref{PNex}.}}

\medskip

By Item (1) of Corollary \ref{coro3}, the group G is not virtually nilpotent.

\medskip

We claim that all maps in $[G,G]\setminus \{\id\}$ commute and have order $3$.
\begin{quote} 
Indeed, let $C=[f,h] \in [G,G]\setminus \{\id\}$ be a non trivial commutator. By Lemma \ref{L4Nex} (3), we can suppose that $f,h \in \ker \ell$. Let us decompose $[0,1)$ as a union of triples of intervals $J$, $J + \frac{1}{3}$, $J + \frac{2}{3}$ with $J\subset [0, \frac{1}{3})$ such that $f, h , f^{-1}, h^{-1}$ are continuous on $J$, $J + \frac{1}{3}$, $J + \frac{2}{3}$. 

Let $J$ be such an interval, it follows from Item (2) of Lemma \ref{L3Nex} that $$\sigma_{[f,h],J}=\left[\sigma_{f,J} \ , \ \sigma_{h,J} \right] \in \left[ \mathfrak{S}_3, \mathfrak{S}_3\right] =\mathfrak{A}_3$$

Noting that the non trivial elements of $\mathfrak{A}_3$ are $3$-cycles, we get that $\sigma_{[f,h],J}$ is either trivial or has order $3$ and the same holds for $C_{|J}$ by its continuity. As $C\not=\id$, there is some $J$ such that $C_{|J}\not=\id$ and then $C$ has order $3$.

As $\mathfrak{A}_3$ is abelian, two commutators in $[G,G]$ have commuting associate permutations on their common continuity intervals, so they commute.

Finally, we get the claim since $[G,G]$ is generated by the commutators and the product of two elements of order $3$ that commute is either of order $3$ or trivial.
\end{quote}

\medskip

In addition, $[G,G]$ is not finitely generated. This is due to the fact that any finitely generated abelian torsion group has to be finite but the $[G,G]$-elements $[R_\alpha^n, g]$, $n\in \mathbb Z$ are pairwise distinct, since the proof of Item (5) of Lemma \ref{L2Nex} indicates that they have pairwise distinct break point sets. 

\subsection{Non virtually solvable examples.} \label{ExNoViSo} \

Let $n\in \N^*$, $n\geq 5$, we make the alternating group $\mathcal A_n$ act on $[0,1)$ as a subgroup of $G_n$ so that the \textbf{IET associated to $a\in \mathcal A_n$}, $t_a$, sends the interval $J_p=[\frac{p-1}{n}, \frac{p}{n})$ to $J_{a(p)}=[\frac{a(p)-1}{n}, \frac{a(p) }{n})$ and has a trivial rotation vector. In particular, for any $x\in [0,1)$ there exists an integer $p_a(x)$ such that $t_a(x) = x + \frac{p_a(x)}{n}$. Let $G$ be the subgroup of $\iet$ generated by $\mathcal A_n$ and $R_{\alpha}$, $\alpha \in [0, \frac{1}{n}) \setminus \Q$. 

\medskip

Note that any $f\in G$ can be written as 
$$f= R_\alpha^{p_s} a_s R_\alpha^{p_{s-1}}a_{s-1} \cdots R_\alpha^{p_1} a_1 R_\alpha^{p_0},\mbox{ where } a_j \in \mathcal A_n \mbox{  and } p_j \in \Z.$$

\begin{prop} \label{PNex5}
The group $G$ is elementary amenable, non virtually solvable and its commutator subgroup $[G,G]$ is a perfect but not simple locally finite \footnote{i.e its finitely generated subgroups are finite} torsion group.
\end{prop}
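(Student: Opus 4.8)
\emph{Plan.} I will introduce the two structural tools behind the discussion preceding the statement. First, for $f\in G$ define $\ell(f)\in\Z$ by $f(x)=x+\ell(f)\,\alpha+\tfrac{p_f(x)}{n}$ with $p_f(x)\in\Z$; by induction on word length in $S=\{R_\alpha\}\cup\{t_a:a\in\mathcal A_n\}$, exactly as in Lemmas~\ref{L1Nex} and \ref{L2Nex}, such an $\ell(f)$ exists and is unique (because $\alpha\notin\Q$), $\ell$ is a morphism, $\ell(R_\alpha)=1$ and $\ell(t_a)=0$. Rewriting words as in the proof of Lemma~\ref{L4Nex}(1) gives $\ker\ell=\langle R_\alpha^{k}t_aR_\alpha^{-k}:k\in\Z,a\in\mathcal A_n\rangle=\langle R_\alpha^{k}\mathcal A_nR_\alpha^{-k}:k\in\Z\rangle$; as $\mathcal A_n$ is perfect for $n\ge5$, each conjugate $R_\alpha^{k}\mathcal A_nR_\alpha^{-k}$ is a perfect subgroup, hence lies in $[G,G]$, whence $\ker\ell\subseteq[G,G]\subseteq\ker\ell$, so $[G,G]=\ker\ell$, and it is perfect (generated by perfect subgroups). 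Second, every $f\in\ker\ell$ preserves each fibre $F_t=\{t+\tfrac jn:0\le j\le n-1\}$ of $\pi\colon[0,1)\to[0,\tfrac1n)$, $\pi(x)=x\bmod\tfrac1n$, and restriction to $F_t$ gives a morphism $\rho_t\colon\ker\ell\to\mathrm{Sym}(F_t)$ whose image is the alternating subgroup $\widetilde{\mathcal A_n}$: the $t_a$ act by the relabelled copy of $\mathcal A_n$, and $R_\alpha^{k}t_aR_\alpha^{-k}$ acts by a conjugate of $\widetilde a$ by a power of the $n$-cycle on the sheets, which normalises $\widetilde{\mathcal A_n}$. Thus $\rho_t\colon\ker\ell\twoheadrightarrow\mathcal A_n$ for every $t$.

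\emph{Local finiteness, torsion, amenability, non-simplicity.} If $H=\langle h_1,\dots,h_r\rangle\le\ker\ell$, then $\pi\circ h=\pi$ for $h\in\ker\ell$, so Properties~\ref{proBP}(a) give $\pi(\BP(h))\subseteq B:=\bigcup_i\pi(\BP(h_i))$ for all $h\in H$; hence on each of the $\#B$ blocks of fibres cut out by $B$, $h$ translates each sheet by a multiple of $\tfrac1n$, so $h$ is determined, injectively and multiplicatively, by a tuple in $\mathcal S_n^{\#B}$. Thus $H$ is finite: $[G,G]=\ker\ell$ is locally finite — this is the footnote — and in particular a torsion group. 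Locally finite groups being elementary amenable and $G/\ker\ell\cong\Z$, the extension $1\to\ker\ell\to G\to\Z\to 1$ makes $G$ elementary amenable. Finally $\ker\ell$ is infinite (the conjugates $R_\alpha^{k}\mathcal A_nR_\alpha^{-k}$ have pairwise distinct break point sets, since $\alpha$ is irrational, hence are pairwise distinct subgroups), so for any $t_0$ the normal subgroup $\ker\rho_{t_0}$, of finite index $\#\mathcal A_n>1$ in the infinite group $\ker\ell$, is proper and non-trivial: $[G,G]=\ker\ell$ is not simple.

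\emph{Non virtual solvability, reduced to one claim.} It is enough that $\ker\ell=[G,G]$ be not virtually solvable, since a finite-index solvable subgroup of $G$ meets $\ker\ell$ in one. Suppose $S_0\lhd\ker\ell$ is solvable of finite index (pass to the normal core if needed). For each $t$, $\rho_t(S_0)$ is a solvable normal subgroup of the simple non-abelian group $\mathcal A_n$, hence trivial, so $S_0\subseteq\ker\rho_t$. Choose $t_1,t_2,\dots\in[0,\tfrac1n)$ in pairwise distinct orbits of the base rotation $\bar R\colon t\mapsto t+\alpha\bmod\tfrac1n$ (there are uncountably many orbits). If we know that $(\rho_{t_1},\dots,\rho_{t_N})\colon\ker\ell\to\mathcal A_n^{N}$ is onto for every $N$, then $[\ker\ell:\bigcap_{i\le N}\ker\rho_{t_i}]=(\#\mathcal A_n)^{N}\to\infty$, so $\bigcap_{i}\ker\rho_{t_i}$ has infinite index, contradicting $S_0\subseteq\bigcap_i\ker\rho_{t_i}$ and $[\ker\ell:S_0]<\infty$. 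By Goursat's lemma and simplicity, surjectivity of $\rho_{t_i}\times\rho_{t_j}$ onto $\mathcal A_n\times\mathcal A_n$ is equivalent to $\ker\rho_{t_i}\ne\ker\rho_{t_j}$; and by the structure of subdirect products of non-abelian finite simple groups, once every such pair is onto, so is every $(\rho_{t_1},\dots,\rho_{t_N})$. This finishes the proof granting the pairwise claim.

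\emph{The hard part.} The real content is: for $t_i,t_j$ in distinct $\bar R$-orbits, $\rho_{t_i}\times\rho_{t_j}$ is onto $\mathcal A_n\times\mathcal A_n$. The obstruction is that $R_\alpha$ is a skew product over the irrational rotation $\bar R$: on $F_t$ it fixes the sheets if $t<\tfrac1n-\alpha$ and cyclically shifts them by one otherwise, so $R_\alpha^{k}$ shifts the sheets of $F_t$ by $c^{e_k(t)}$, where $c$ is the $n$-cycle on sheets and $e_k(t)$ counts the wrap-arounds of $t,t+\alpha,\dots$ in $k$ steps. One checks that the image of $\rho_{t_i}\times\rho_{t_j}$ is a proper diagonal exactly when $e_k(t_i)\equiv e_k(t_j)\pmod n$ for all $k$; since $t_i\ne t_j$, the difference $e_k(t_i)-e_k(t_j)$ is, up to a harmless constant, $\lfloor nt_i+kn\alpha\rfloor-\lfloor nt_j+kn\alpha\rfloor$, which by equidistribution of $kn\alpha\bmod 1$ (note $n\alpha\notin\Q$) takes value $1$ on a set of $k$ of positive density and $0$ on another, hence is never constantly $0\bmod n$ as $n\ge 5$. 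Getting the bookkeeping right — the sheet relabellings, the precise $e_k(t)$, and the exclusion of $t_i,t_j$ lying on a common rotation orbit — is the step I expect to demand the most care.
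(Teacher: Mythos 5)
Your construction of $\ell$, the identification $[G,G]=\ker\ell$ with its perfectness, the fibre morphisms $\rho_t$ (which coincide with the paper's local permutations $\omega_x$, since IETs are right-continuous), the local finiteness argument via the finiteness of $\bigcup_{h\in H}\pi(\BP(h))$, the deduction of torsion and elementary amenability from $G/\ker\ell\cong\Z$, and the non-simplicity witness $\ker\rho_{t_0}$ (a proper normal subgroup of finite index in the infinite group $\ker\ell$) are all correct and run parallel to the paper, which instead exhibits the normal subgroups $S_\beta$ of elements supported in $\bigcup_p[\frac{p-1}{n},\frac{p-1}{n}+\beta)$.

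The gap is in non virtual solvability: you reduce it to the surjectivity of $(\rho_{t_1},\dots,\rho_{t_N})$ onto $\mathcal A_n^{N}$, flag this as the ``hard part'', and do not prove it -- only a heuristic about equidistribution of $kn\alpha$ is offered, so as written the proof is incomplete. But this detour is unnecessary, and your own setup already contains the contradiction: you correctly show that a solvable normal finite-index $S_0\lhd\ker\ell$ satisfies $S_0\subseteq\ker\rho_t$ for \emph{every} $t\in[0,\frac1n)$; since an element of $\ker\ell$ preserves each fibre $F_t$ and these fibres cover $[0,1)$, an element lying in every $\ker\rho_t$ is the identity, so $S_0=\{\id\}$, which cannot have finite index in the infinite group $\ker\ell$. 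The paper argues differently but just as cheaply: for a nontrivial normal finite-index $R\lhd\ker\ell$ there is \emph{some} $x$ with $\omega_x(R)\neq\{\id\}$ (any $x=\pi(y)$ with $y$ moved by an element of $R$), hence $\omega_x(R)=\mathcal A_n$ by simplicity; perfectness of $\mathcal A_n$ then forces every iterated commutator subgroup of $R$ to surject onto $\mathcal A_n$, so $R$ is not solvable. Either one-line repair closes your gap without any analysis of products of the $\rho_{t_i}$.
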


\begin{lemm} \label{L1Nex5} 
Let $f \in G$. Then there exists a unique $\ell (f) \in \mathbb Z$ such that for all $x\in [0,1)$ one has 
$$f(x) = x + \ell (f) \alpha + \frac{p_f(x)}{n} \mbox{ , where } p_f(x) \in \mathbb Z.$$
\end{lemm}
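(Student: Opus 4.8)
The plan is to prove Lemma \ref{L1Nex5} by induction on the word length $L_S(f)$ of $f$ in the generating set $S = \mathcal A_n \cup \{R_\alpha\}$, exactly mirroring the argument used for Lemma \ref{L1Nex}. First I would set up the notation: every element of $G$ is a word of the form $f = R_\alpha^{p_s} a_s R_\alpha^{p_{s-1}} a_{s-1} \cdots R_\alpha^{p_1} a_1 R_\alpha^{p_0}$ with $a_j \in \mathcal A_n$ and $p_j \in \Z$, as noted just above the statement, so it suffices to track how each generator affects the displacement $f(x) - x$.

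The base case handles the two generator types. For $f = t_a$ with $a \in \mathcal A_n$, one has $t_a(x) = x + \frac{p_a(x)}{n}$ by construction, so the claim holds with $\ell(t_a) = 0$. For $f = R_\alpha^{\pm 1}$, one has $f(x) = x \pm \alpha + \varepsilon$ for some $\varepsilon \in \{0,-1\}$ (depending on whether $x$ crosses the break point), hence $f(x) = x + (\pm 1)\alpha + \frac{n\varepsilon}{n}$, which is of the required form with $\ell(f) = \pm 1$. For the inductive step, I would write a word of length $m+1$ as $f = R_\alpha^{\pm 1} f_0$ or $f = t_a f_0$ with $L_S(f_0) = m$, apply the inductive hypothesis $f_0(x) = x + \ell(f_0)\alpha + \frac{p_{f_0}(x)}{n}$, and then compose. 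In the first case $f(x) = f_0(x) \pm \alpha + \varepsilon = x + (\ell(f_0)\pm 1)\alpha + \frac{p_{f_0}(x) + n\varepsilon}{n}$; in the second case $f(x) = t_a(f_0(x)) = f_0(x) + \frac{p_a(f_0(x))}{n} = x + \ell(f_0)\alpha + \frac{p_{f_0}(x) + p_a(f_0(x))}{n}$. Either way $f$ has the stated form, with $\ell(f) = \ell(f_0) \pm 1$ or $\ell(f) = \ell(f_0)$ respectively.

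Uniqueness of $\ell(f)$ follows from the irrationality of $\alpha$: if $x + \ell_1 \alpha + \frac{p_1}{n} = x + \ell_2 \alpha + \frac{p_2}{n}$ for all $x$, then $(\ell_1 - \ell_2)\alpha = \frac{p_2 - p_1}{n} \in \Q$, forcing $\ell_1 = \ell_2$ (and then $p_1 = p_2$ pointwise). There is no real obstacle here; the only thing to be slightly careful about is confirming that the ``correction term'' $\varepsilon$ coming from a rotation is always an integer multiple of $\frac{1}{n}$, which is immediate since $\varepsilon \in \{0, -1\} = \{0, -n/n\}$, and that $p_a$ indeed takes values in $\Z$ (rather than in $\frac{1}{n}\Z$), which holds because $t_a$ permutes the intervals $J_p$ of length $\frac{1}{n}$ by translations of the form $\frac{a(p) - p}{n}$ with $a(p) - p \in \Z$. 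This lemma is the analogue of Lemma \ref{L1Nex} for $n \geq 5$ and is the foundation for defining the morphism $\ell: G \to \Z$ and analyzing $\ker \ell$, just as in Section \ref{ExMeta}.
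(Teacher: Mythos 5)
Your proof is correct and follows the same route as the paper, which simply runs the induction on word length from Lemma \ref{L1Nex} again with the translation term $\epsilon\frac{2}{3}$ replaced by $\frac{p_a(x)}{n}$. The only cosmetic slip is that for $R_\alpha^{-1}$ the wraparound correction is $\varepsilon\in\{0,+1\}$ rather than $\{0,-1\}$, but since all that matters is $\varepsilon\in\Z$ this changes nothing.
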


\begin{proof} As in Lemma \ref{L1Nex}, we argue by induction on the length $L_S(f)$ of $f\in G$ as a word in $S=\{\mathcal A_n,R_\alpha\}$. The change being in the case $f=g f_0$ of the induction step. Here, we have $f= a f_0$ with $a\in \mathcal A_n$ and the involved translation $\epsilon \frac{2}{3}$ is replaced by $\frac{p_a(x)}{n}$.
\end{proof}

\begin{lemm} \label{L2Nex5} \ 
\begin{enumerate}
\item The map $\ell : G \to \mathbb Z$ is a morphism.
\item Any $f\in G$ can be written as $f= P_f R_{\alpha} ^{\ell(f)} = R_{\alpha}^{\ell(f)} Q_f $ where $P_f,Q_f \in \ker \ell$.
\end{enumerate}
\end{lemm}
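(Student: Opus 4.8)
The plan is to follow almost verbatim the argument already used for Lemma \ref{L2Nex}, items (1) and (2), the only new ingredient being Lemma \ref{L1Nex5} in place of Lemma \ref{L1Nex}. So this should be routine and the proof will be short.

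For item (1), I would first record the trivial values $\ell(\id)=0$ and $\ell(R_\alpha)=1$, which are immediate from the defining equation of Lemma \ref{L1Nex5} (for $R_\alpha$ one has $R_\alpha(x)=x+\alpha+\frac{p}{n}$ with $p\in\{0,-1\}$ depending on whether $x<1-\alpha$). Then, given $f,h\in G$ and $x\in[0,1)$, I compose the two expansions supplied by Lemma \ref{L1Nex5}:
$$hf(x)=h(f(x))=f(x)+\ell(h)\,\alpha+\frac{p_h(f(x))}{n}=x+\bigl(\ell(f)+\ell(h)\bigr)\alpha+\frac{p_f(x)+p_h(f(x))}{n}.$$
Since $\alpha$ is irrational, the integer $\ell(hf)$ appearing in the (unique) expansion of $hf$ given by Lemma \ref{L1Nex5} must equal $\ell(f)+\ell(h)$ (and, incidentally, $p_{hf}(x)=p_f(x)+p_h(f(x))$). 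Hence $\ell\colon G\to(\mathbb Z,+)$ is a morphism.

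For item (2), knowing now that $\ell$ is a morphism with $\ell(R_\alpha)=1$, I would set $P_f:=f\,R_\alpha^{-\ell(f)}$ and compute $\ell(P_f)=\ell(f)+\ell\bigl(R_\alpha^{-\ell(f)}\bigr)=\ell(f)-\ell(f)=0$, so that $P_f\in\ker\ell$ and $f=P_f\,R_\alpha^{\ell(f)}$; symmetrically $Q_f:=R_\alpha^{-\ell(f)}\,f\in\ker\ell$ gives $f=R_\alpha^{\ell(f)}\,Q_f$.

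There is no genuine obstacle here: the statement is an exact transcription of Lemma \ref{L2Nex}(1)--(2), and the only point requiring (minimal) care is to invoke the uniqueness part of Lemma \ref{L1Nex5} — guaranteed by the irrationality of $\alpha$ — when reading off $\ell(hf)$ from the composed expansion, together with the observation that $R_\alpha$ indeed lies in $G$ so that the decompositions in item (2) make sense inside $G$.
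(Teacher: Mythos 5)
Your proof is correct and is exactly the argument the paper intends: the paper disposes of this lemma with the single line that the proof is analogous to the earlier one (Lemma \ref{L2Nex}), and you have simply written out that analogy, composing the expansions from Lemma \ref{L1Nex5} and using irrationality of $\alpha$ for uniqueness of $\ell(hf)$, then setting $P_f=f\,R_\alpha^{-\ell(f)}$ and $Q_f=R_\alpha^{-\ell(f)}f$. The only (harmless) slip is in your parenthetical for $R_\alpha$: the correction term is $0$ or $-1$, i.e.\ $\frac{p}{n}$ with $p\in\{0,-n\}$, not $p\in\{0,-1\}$.
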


The proof is analogous to that of Lemma \ref{L4Nex}.

\medskip

\begin{lemm}  \label{L4Nex5} \ 
\begin{enumerate}
\item The group $\ker \ell$ is generated by $\{ R_\alpha ^k a  R_\alpha^{-k}, \ k \in \mathbb Z , \  a \in \mathcal A_n\}$.
\item The group $\ker \ell$ is perfect and $[G,G]=\ker \ell$.
\item The $\ker \ell$-orbit of $x\in [0, \frac{p}{n})$ is $\{ {\displaystyle x+\frac{p}{n},} \ \  p= 0, \cdots, n-1 \}$ and \ $\ker \ell=G_{\textsf{per}}$.
\end{enumerate}
\end{lemm}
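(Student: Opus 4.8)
The plan is to establish the three assertions in order, reading off everything from the normal form $f= R_\alpha^{p_s} a_s R_\alpha^{p_{s-1}}a_{s-1} \cdots R_\alpha^{p_1} a_1 R_\alpha^{p_0}$ (with $a_j\in\mathcal A_n$, $p_j\in\Z$) recorded just before Proposition~\ref{PNex5}, together with Lemmas~\ref{L1Nex5} and~\ref{L2Nex5}; the hypothesis $n\ge5$ enters only in part~(2). For (1) I would repeat the telescoping argument of Lemma~\ref{L4Nex}~(1): inserting the trivial factors $R_\alpha^{-q_j}R_\alpha^{q_j}$ with $q_j=p_s+\cdots+p_j$ rewrites
$$f=\bigl(R_\alpha^{q_s}a_sR_\alpha^{-q_s}\bigr)\bigl(R_\alpha^{q_{s-1}}a_{s-1}R_\alpha^{-q_{s-1}}\bigr)\cdots\bigl(R_\alpha^{q_1}a_1R_\alpha^{-q_1}\bigr)\,R_\alpha^{q_0}.$$
Since $\ell$ is a morphism vanishing on $\mathcal A_n$ with $\ell(R_\alpha)=1$ (Lemma~\ref{L2Nex5}~(1)), one gets $\ell(f)=q_0=\sum_{j=0}^s p_j$, so $f\in\ker\ell$ exactly when $q_0=0$, in which case $f$ is a product of conjugates $R_\alpha^{k}aR_\alpha^{-k}$; conversely each such conjugate has $\ell=0$.

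For (2), the inclusion $[G,G]\subset\ker\ell$ is immediate because $\Z$ is abelian. For the reverse, by (1) it suffices to show each generator $R_\alpha^{k}aR_\alpha^{-k}$ of $\ker\ell$ lies in $[\ker\ell,\ker\ell]$; as $\ker\ell\lhd G$, conjugation by $R_\alpha^{k}$ is an automorphism of $\ker\ell$, so it is enough to prove $\mathcal A_n\subset[\ker\ell,\ker\ell]$. Here I would use that $\mathcal A_n\subset\ker\ell$ (its elements have trivial rotation vector, hence $\ell=0$) and that $\mathcal A_n$ is perfect for $n\ge5$, so every $a\in\mathcal A_n$ is a product of commutators of elements of $\mathcal A_n\subset\ker\ell$. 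Then $\ker\ell=[\ker\ell,\ker\ell]\subset[G,G]\subset\ker\ell$, which simultaneously gives that $\ker\ell$ is perfect and that $[G,G]=\ker\ell$.

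For (3), write $x=x_0+\tfrac{j-1}{n}$ with $x_0\in[0,\tfrac1n)$ and $x\in J_j$. By Lemma~\ref{L1Nex5}, every $f\in\ker\ell$ acts on $x$ as a translation by an element of $\tfrac1n\Z$, so the $\ker\ell$-orbit of $x$ sits inside $(x+\tfrac1n\Z)\cap[0,1)=\{x_0+\tfrac{p}{n}:p=0,\dots,n-1\}$; conversely, transitivity of $\mathcal A_n$ on $\{1,\dots,n\}$ shows that for each $q$ there is $a\in\mathcal A_n$ with $t_a(x)=x_0+\tfrac{q-1}{n}$, so the orbit is exactly that $n$-point set. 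Consequently every $f\in\ker\ell$ has all orbits of size $\le n$, hence $f^{L}=\id$ with $L=\mathrm{lcm}(1,\dots,n)$, so $\ker\ell\subset G_{\textsf{per}}$; and if $f^m=\id$ then $0=f^m(x)-x=m\,\ell(f)\,\alpha+\tfrac{p_{f^m}(x)}{n}$ forces $\ell(f)=0$ by irrationality of $\alpha$, so $G_{\textsf{per}}\subset\ker\ell$ and equality holds.

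The only genuinely non-formal step is the perfectness input in (2): everything hinges on $\mathcal A_n$ being perfect, which is exactly why $n\ge5$ is assumed, and it is this feature that ultimately prevents $G$ from being virtually solvable. I do not foresee further obstacles — parts (1) and (3) are routine manipulations paralleling Section~\ref{ExMeta}.
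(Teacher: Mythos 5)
Your proposal is correct and follows essentially the same route as the paper: the telescoping rewriting for (1), perfectness of $\mathcal A_n$ (with $n\ge 5$) plus normality of $[\ker\ell,\ker\ell]$ in $G$ for (2), and the translation form of Lemma \ref{L1Nex5} together with transitivity of $\mathcal A_n$ and irrationality of $\alpha$ for (3). No gaps.
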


\begin{proof} \ 
\begin{enumerate}

\item We previously noted that any $f\in G$ can be written as 
$$f= R_\alpha^{p_s} a_s  R_\alpha^{p_{s-1}} a_{s-1} \cdots R_\alpha^{p_1}  a_1 R_\alpha^{p_0},\mbox{ where } p_j \in \Z, $$
and as in the proof of Lemma \ref{L4Nex} we get: 
$$f= R_\alpha^{p_s} a_s  R_\alpha^{-p_s}  \ \ R_\alpha^{p_s+ p_{s-1}} a_{s-1} R_\alpha^{-(p_s+ p_{s-1})}\ \cdots \ R_\alpha^{p_s+ p_{s-1}+ \cdots p_1} a_1 R_\alpha^{-(p_s+ p_{s-1}+ \cdots p_1)} \ \ R_\alpha^{ \sum_{j=0}^s p_j}$$
Moreover, as $\ell$ is a morphism $f\in \ker \ell$ if and only if $\sum_{j=0}^s p_j=0$, thus any $f\in \ker \ell$ is the product of elements of the form $R_\alpha^{k} a R_\alpha^{-k}$. 

\medskip

\item Let $f\in \ker \ell$, by the previous point, $f$ can be written as $$f=\prod_{j} R_\alpha ^{n_j} a_j  R_\alpha^{-n_j} \mbox{ where  } n_j \in \mathbb Z \mbox{ and } a_j \in \mathcal A_n$$
As $\mathcal A_n$ is perfect and $\mathcal A_n < \ker \ell $, any $a_j$ belongs to $[\ker \ell, \ker \ell]$. Finally, $[\ker \ell, \ker \ell]$ being normal in $G$, any $R_\alpha ^{n_j} a_j  R_\alpha^{-n_j}$ belongs to $[\ker \ell, \ker \ell]$ and then $f$ belongs to $[\ker \ell, \ker \ell]$, which establishes that $\ker \ell$ is perfect.

In addition, as $\ell$ is a morphism to an abelian group, $[G,G]<\ker \ell$ and the other inclusion holds since $\ker \ell$ is perfect.

\medskip

\item Let $x\in [0, \frac{p}{n})$ and $f\in \ker \ell$. Lemma \ref{L1Nex5} gives $f(x)= x+\frac{p_f(x)}{n}$ for some $p_f(x)\in \Z$ and $f(x)\in[0,1)$ forces $p_f(x)\in \{0,1,\cdots, n-1\}$. Moreover, by choosing $a\in \mathcal A_n$ with $a(1)=p$, we obtain $t_a(x)= x+\frac{p-1}{n}$. Therefore the $\mathcal{A}_n$-orbit of $x$ is $ \bigl\{ {\displaystyle x+\frac{p}{n},}  \ \ p=0, \cdots, n-1 \bigr\}$. Finally, $\ker \ell$ is contained in $\iet_\Q$ so in $G_{\textsf{per}}$ and the other inclusion is a consequence of $\alpha\notin \Q$.
\end{enumerate}

\vskip -7mm \end{proof}

\begin{defi} 
Let $f\in \ker \ell$, $x\in [0, \frac{1}{n})$ and $\beta \in (0, \frac{1}{n})$ such that $f$ is continuous on the intervals $J_p(x, \beta) = [x+\frac{p-1}{n}, x+\frac{p-1}{n}+\beta)$ for all  $p=1, \cdots, n$.

As the translations of $f$ belong to $\{\frac{p}{n},\ p\in \Z\}$, the map $f$ permutes the $J_p(x, \beta)$ and then there exists $\omega\in \mathcal S_n$ such that $$f(J_p(x,\beta))= J_{\omega (p)}(x,\beta)= [x+\frac{\omega (p)-1}{n}, x+\frac{\omega (p)-1}{n}+\beta)$$
It is easy to see that $\omega$ does not depend on the choice of suitable $\beta$ and it is denoted by $\omega (f,x)$ and called \textbf{the local permutation of $f$ at $x$}.
\end{defi}

\begin{prop} \label{WMorph} Let $x\in [0, \frac{1}{n})$.

The map $\omega_x : \left\{ \begin{array}{ll}
\ker \ell &\to \ \ \mathcal S_n \cr f &\mapsto \ \ \omega (f,x) \end{array} \right.$ is a morphism and its image is $\mathcal A_n$.
\end{prop}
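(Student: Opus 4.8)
The plan is to follow the template of Lemma~\ref{L3Nex}: first check that $\omega_x$ is a group morphism by tracking continuity intervals, and then identify its image by combining one direct computation with the perfectness of $\ker\ell$ established in Lemma~\ref{L4Nex5}. For the morphism property, given $f,h\in\ker\ell$, the fact that $f$, $h$ and $fh$ are IETs makes their break point sets finite, so one can fix $\beta\in(0,\frac1n)$ small enough that $f$, $h$ and $fh$ are all continuous on each interval $J_p(x,\beta)=[x+\frac{p-1}{n},\,x+\frac{p-1}{n}+\beta)$, $p=1,\dots,n$; as noted right after the definition, $\omega(f,x)$, $\omega(h,x)$ and $\omega(fh,x)$ do not depend on this (small enough) choice. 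Then, by the same computation as in Lemma~\ref{L3Nex}(2), for every $p$ one gets
$$(fh)\bigl(J_p(x,\beta)\bigr)=f\bigl(J_{\omega(h,x)(p)}(x,\beta)\bigr)=J_{\omega(f,x)(\omega(h,x)(p))}(x,\beta),$$
since $h$ carries $J_p(x,\beta)$ onto $J_{\omega(h,x)(p)}(x,\beta)$ and $f$ is continuous on the latter. Hence $\omega(fh,x)=\omega(f,x)\circ\omega(h,x)$, so $\omega_x$ is a morphism and its image is a subgroup of $\mathcal S_n$.

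Next I would determine the image by two inclusions. For $\mathcal A_n\subseteq\mathrm{Im}(\omega_x)$: given $a\in\mathcal A_n$, the IET $t_a$ has all its translations in $\{\frac{p}{n}:p\in\mathbb Z\}$, so $\ell(t_a)=0$ and $t_a\in\ker\ell$; since $t_a$ restricts to the translation by $\frac{a(p)-p}{n}$ on the interior of each $J_p$, for $\beta$ small it maps $J_p(x,\beta)$ onto $J_{a(p)}(x,\beta)$, whence $\omega(t_a,x)=a$. For $\mathrm{Im}(\omega_x)\subseteq\mathcal A_n$: by Lemma~\ref{L4Nex5}(2) the group $\ker\ell$ is perfect, so its homomorphic image $\mathrm{Im}(\omega_x)$ is again perfect (the image of $[\ker\ell,\ker\ell]=\ker\ell$ equals $[\mathrm{Im}(\omega_x),\mathrm{Im}(\omega_x)]$); applying the signature morphism $\mathcal S_n\to\{\pm1\}$, whose target is abelian, forces the signature of every element of $\mathrm{Im}(\omega_x)$ to be $1$, i.e. $\mathrm{Im}(\omega_x)\subseteq\mathcal A_n$. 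Together these inclusions give $\mathrm{Im}(\omega_x)=\mathcal A_n$.

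I do not anticipate a genuine obstacle, since the real content sits in the earlier statements (the local-permutation bookkeeping of Lemma~\ref{L3Nex} and the perfectness of $\ker\ell$ from Lemma~\ref{L4Nex5}); the only mild care point is choosing a single $\beta$ valid for $f$, $h$ and $fh$ at once while recalling that $\omega(\cdot,x)$ is insensitive to it. Should one prefer to avoid the perfectness shortcut, an alternative route to $\mathrm{Im}(\omega_x)\subseteq\mathcal A_n$ is to use the generating set $\{R_{\alpha}^{k}\,t_a\,R_{\alpha}^{-k}\}$ of $\ker\ell$ from Lemma~\ref{L4Nex5}(1) and compute directly that the local permutation of each such generator is conjugate, through a cyclic shift of the indices, to the index-shifted form of $a$, hence has the same cycle type and in particular is even.
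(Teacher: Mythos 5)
Your proposal is correct and follows essentially the same route as the paper: verify the morphism property by tracking the intervals $J_p(x,\beta)$ for a common small $\beta$, obtain $\mathrm{Im}(\omega_x)\subseteq\mathcal A_n$ from the perfectness of $\ker\ell$ (Lemma~\ref{L4Nex5}), and obtain the reverse inclusion by checking $\omega(t_a,x)=a$ for the IET associated to $a\in\mathcal A_n$. The only cosmetic difference is that you phrase the containment in $\mathcal A_n$ via the signature morphism rather than directly as $[\mathcal S_n,\mathcal S_n]=\mathcal A_n$, which is the same argument.
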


\begin{proof} Let $x\in [0, \frac{1}{n})$, it is obvious that $\omega(\id,x)= \id$. Let $f,h \in \ker \ell$ and $\beta \in (0, \frac{1}{n})$ small enough so that $f$ and $h$ are continuous on the intervals $J_p(x, \beta)$. 
Hence, for any $p\in\{1, \cdots, n\}$, we have $$h\circ f (J_p(x, \beta)) = h\bigl(J_{\omega(f,x)(p)}(x, \beta) \bigr) = J_{\omega(h,x)\omega(f,x)(p)}(x, \beta),$$ therefore $\omega (h\circ f,x)= \omega(h,x)\ \omega(f,x)$.

\medskip

Since $\ker \ell$ is perfect the image of $\omega_x$ is contained in $[\mathcal S_n,\mathcal S_n]= \mathcal A_n$. In addition, it is easy to check that the IET $t$ associated to $\tau \in \mathcal A_n$ satisfies $\omega (t,x)= \tau$ and then  we get $\omega_x(\ker \ell)= \mathcal A_n$.
\end{proof}

\begin{prop}\label{Sbeta} For any $\beta \in (0, \frac{1}{n})$, the set $S_\beta$ of maps in $\ker \ell$ having support in $\displaystyle \bigcup _{p=1}^{n} \ \bigl[\frac{p-1}{n},\frac{p-1}{n}+\beta \bigr)$ is a normal subgroup of $\ker \ell$ and its image by $\omega_0$ is $\mathcal A_n$.
\end{prop}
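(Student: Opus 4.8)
The plan is to prove the two assertions in turn, writing $U_\beta:=\bigsqcup_{p=1}^{n}[\tfrac{p-1}{n},\tfrac{p-1}{n}+\beta)$, so that $S_\beta$ is exactly the set of $f\in\ker \ell$ that fix $[0,1)\setminus U_\beta$ pointwise (equivalently, have support in $U_\beta$).

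For the subgroup and normality statements, the one nontrivial input is that every $g\in\ker \ell$ satisfies $g(U_\beta)=U_\beta$: by Lemma \ref{L1Nex5} the translations of $g$ all lie in $\tfrac1n\Z$, so $g$ maps each interval $[\tfrac{p-1}{n},\tfrac{p-1}{n}+\beta)$ onto an interval $[\tfrac{p-1+k}{n},\tfrac{p-1+k}{n}+\beta)$ with $k\in\Z$, and since $g(x)\in[0,1)$ and $\beta<\tfrac1n$ we must have $p-1+k\in\{0,\dots,n-1\}$, so this is again one of the constituents of $U_\beta$; applying the same to $g^{-1}\in\ker \ell$ gives $g(U_\beta)=U_\beta$. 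Granting this, closure of $S_\beta$ under composition and inversion is immediate (if $f$ and $h$ fix $[0,1)\setminus U_\beta$ then so do $f\circ h$ and $f^{-1}$, and $\ker \ell$ is a group), and for $g\in\ker \ell$, $f\in S_\beta$ the map $gfg^{-1}$ lies in $\ker \ell$ and fixes $[0,1)\setminus U_\beta$ pointwise (because $g^{-1}$ preserves that set), so $S_\beta\lhd\ker \ell$.

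For the image, recall from Proposition \ref{WMorph} that $\omega_0\colon\ker \ell\to\mathcal S_n$ is a morphism onto $\mathcal A_n$; hence $\omega_0(S_\beta)\subseteq\mathcal A_n$, and since $S_\beta$ is normal in $\ker \ell$ and $\omega_0$ is surjective, $\omega_0(S_\beta)$ is a normal subgroup of $\mathcal A_n$. As $n\ge5$, $\mathcal A_n$ is simple, so it suffices to produce one element of $S_\beta$ with nontrivial $\omega_0$-image. To this end, using that $\alpha$ is irrational (so $\{m\alpha\bmod1:m\ge1\}$ is dense in $[0,1)$), choose $m\ge1$ with $\epsilon:=m\alpha\bmod1\in(0,\beta)$, so $R_\alpha^{m}=R_\epsilon$ is the rotation of $[0,1)$ by $\epsilon<\tfrac1n$, and set $f:=[R_\epsilon,t_a]=R_\epsilon\,t_a\,R_\epsilon^{-1}\,t_a^{-1}$ for a well-chosen $a\in\mathcal A_n$, say $a=(1\,2\,3)$. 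Since $\ell(R_\alpha)=1$, $\ell(t_a)=0$ and $\ell$ is a morphism, $f\in\ker \ell$. A local computation at the break points, of the same kind as (and no harder than) the $n=3$ one in Lemma \ref{L2Nex}(5), shows simultaneously that $f$ is the identity outside $U_\epsilon$ — so $f\in S_\beta$ — and that $\omega_0(f)$ is the commutator $cac^{-1}a^{-1}$ in $\mathcal S_n$, where $c=(1\,2\,\cdots\,n)$; for $a=(1\,2\,3)$ and $n\ge5$ this is the $3$-cycle $(1\,4\,2)\ne\id$. Hence $\omega_0(S_\beta)$ is a nontrivial normal subgroup of the simple group $\mathcal A_n$, so $\omega_0(S_\beta)=\mathcal A_n$.

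The only point requiring real care is this last local computation: one follows a point $x=\tfrac{p-1}{n}+t$ with $0\le t<\epsilon<\tfrac1n$ successively through $t_a^{-1}$, $R_\epsilon^{-1}$, $t_a$, $R_\epsilon$, using that $R_\epsilon^{-1}$ moves $t_a^{-1}(x)$ across exactly one interval boundary while it moves any point of $[0,1)\setminus U_\epsilon$ without crossing one; thus off $U_\epsilon$ the four maps compose to the identity, whereas on $J_p(0,\delta)$ (for $\delta<\epsilon$) they realize the index shift $p\mapsto a\big(a^{-1}(p)-1\big)+1\pmod n$, which is precisely $cac^{-1}a^{-1}$. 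Everything else is formal.
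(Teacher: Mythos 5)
Your proof is correct and follows essentially the same route as the paper: normality of $S_\beta$ from the fact that all translations in $\ker\ell$ lie in $\tfrac1n\Z$ (so every $g\in\ker\ell$ preserves $U_\beta$) together with $\supp(gfg^{-1})=g(\supp f)$, then simplicity of $\mathcal A_n$ reduces the image statement to exhibiting one element of $S_\beta$ with nontrivial local permutation at $0$, which you obtain — exactly as in the paper's Lemma \ref{com} — from the commutator $[R_\epsilon,t_a]$, whose local permutation is $[\sigma,a]=(1\,4\,2)$ for $a=(1\,2\,3)$. Your choice of $\epsilon=m\alpha\bmod 1\in(0,\beta)$ is in fact slightly more careful than the paper's phrasing, which restricts $\beta$ itself to the set $\{\tfrac{p}{n}+k\alpha\}$ rather than passing to a smaller $S_{\epsilon}\subseteq S_\beta$.
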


\noindent \textit{Proof.} The normality of $S_\beta$ is direct consequence of the facts that the translation set of $\ker \ell$ is $\{\frac{p}{n},\  p\in \Z \cap [-(n-1), n-1] \}$ and the support of $h \circ f \circ h^{-1}$ is the image by $h$ of the support of $f$. Consequently, the  image by $\omega_0$ of $S_\beta$  is a normal subgroup of $\mathcal A_n$ which is simple, therefore $\omega_0(S_\beta)$ is either trivial or equal to $\mathcal A_n$. Hence, the proof of Proposition \ref{Sbeta} reduces to prove that $\omega_0(S_\beta)$ is not trivial. This is provided, taking $\beta \in (0, \frac{1}{n})\cap \{ \frac{p}{n} + k\alpha, \  p,k \in \Z \}$ so that $R_\beta \in G$, by the following

\begin{lemm} \label{com} Let $\beta \in (0, \frac{1}{n})$, $\tau \in \mathcal A_n$ and $t\in\ker \ell$ be its associated IET.

Then the map $C_{\beta,t} = [R_\beta,t]= R_{\beta} t R_{-\beta} t^{-1}$ has support in $\displaystyle \bigcup _{p=1}^{n} \ \bigl[\frac{p-1}{n}, \frac{p-1}{n}+\beta \bigl)$ and its local permutation at $0$ is $\omega (C_{\beta,t},0)= [\sigma, \tau]$, where $\sigma$ is the $n$-cycle $(1,2,\cdots,n) \in \mathcal S_n$.
\end{lemm}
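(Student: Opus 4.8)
The plan is to compute the action of $C_{\beta,t} = R_\beta \, t \, R_{-\beta} \, t^{-1}$ directly on the intervals of length $\beta$ that tile the beginnings of the $J_p$'s, imitating the explicit four-arrow computations carried out for $[R_\beta,g]$ in the proof of Lemma \ref{L2Nex} (5). Fix $\beta\in(0,\frac{1}{n})$ and for $p=1,\dots,n$ write $A_p=[\frac{p-1}{n},\frac{p-1}{n}+\beta)$ and $B_p=[\frac{p-1}{n}+\beta,\frac{p}{n})$, so that $[0,1)=\bigsqcup_p (A_p\sqcup B_p)$. First I would record how each of the four maps $R_\beta$, $t$, $R_{-\beta}$, $t^{-1}$ acts on these pieces: $t$ is the IET associated to $\tau\in\mathcal A_n$, so it sends $A_p$ to $A_{\tau(p)}$ and $B_p$ to $B_{\tau(p)}$ by the translation $\frac{\tau(p)-p}{n}$ (here one uses $t\in\ker\ell$, i.e.\ Lemma \ref{L1Nex5} with $\ell(t)=0$, so all translations of $t$ lie in $\frac1n\Z$); and $R_\beta$ sends $A_p$ into $B_p$ (translation $+\beta$) while it sends $B_p$ onto $A_{p+1\bmod n}$, with the wrap-around $B_n\to A_1$ carrying the extra translation $+\beta-1$. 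Likewise $R_{-\beta}$ sends $B_p$ to $A_p$ and $A_{p+1\bmod n}$ to $B_p$, with the corresponding wrap-around. This is exactly the statement that $R_\beta$ realizes the $n$-cycle $\sigma=(1,2,\dots,n)$ at the level of the index when restricted to the relevant tiling, up to the $A/B$ alternation.

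Next I would track an arbitrary point of $B_p$: applying $t^{-1}$ sends $B_p\to B_{\tau^{-1}(p)}$, then $R_{-\beta}$ sends $B_{\tau^{-1}(p)}\to A_{\tau^{-1}(p)}$, then $t$ sends $A_{\tau^{-1}(p)}\to A_{\tau\tau^{-1}(p)}=A_p$, then $R_\beta$ sends $A_p\to B_p$; so $C_{\beta,t}$ is the identity on every $B_p$, hence $\supp(C_{\beta,t})\subseteq\bigsqcup_p A_p=\bigcup_{p=1}^n[\frac{p-1}{n},\frac{p-1}{n}+\beta)$, which is the first assertion. Then I track a point of $A_p$: $t^{-1}$ sends $A_p\to A_{\tau^{-1}(p)}$; $R_{-\beta}$ sends $A_{\tau^{-1}(p)}\to B_{\sigma^{-1}(\tau^{-1}(p))}$ (since $R_{-\beta}$ pulls $A_q$ back to $B_{q-1\bmod n}$, and $q-1\bmod n=\sigma^{-1}(q)$); $t$ sends $B_{\sigma^{-1}\tau^{-1}(p)}\to B_{\tau\sigma^{-1}\tau^{-1}(p)}$; finally $R_\beta$ sends $B_{\tau\sigma^{-1}\tau^{-1}(p)}\to A_{\sigma\tau\sigma^{-1}\tau^{-1}(p)}$. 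Since the composite preserves the family $\{A_p\}$ and the associated index permutation it induces, by definition $\omega(C_{\beta,t},0)$ is the permutation $p\mapsto \sigma\tau\sigma^{-1}\tau^{-1}(p)=[\sigma,\tau](p)$, which is the second assertion. (One must be slightly careful with the composition-order convention: the paper writes $\omega(h\circ f,x)=\omega(h,x)\,\omega(f,x)$ in Proposition \ref{WMorph}, and $C_{\beta,t}=R_\beta\circ t\circ R_{-\beta}\circ t^{-1}$, so reading off the morphism $\omega_0$ gives $\omega(R_\beta,0)\,\omega(t,0)\,\omega(R_{-\beta},0)\,\omega(t^{-1},0)=\sigma\,\tau\,\sigma^{-1}\,\tau^{-1}$, consistent with the arrow computation above.)

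The only genuine subtlety—the "hard part"—is bookkeeping at the wrap-around index $n$ (equivalently near the point $1^-$ and near $0$): for most $p$ the intervals $A_p, B_p$ and their images stay inside a single $J_q$, but when an application of $R_{\pm\beta}$ crosses the endpoint $1\sim 0$ an extra integer translation $\mp 1$ appears, and one must confirm that the set-level description $B_q\mapsto A_{q+1\bmod n}$ is still correct there and that no continuity interval of $t$ or $t^{-1}$ gets split (this is why $\beta<\frac1n$ and why we only claim an equality of supports up to taking $\beta$ small, exactly as in Proposition \ref{Sbeta}). Once the wrap-around is handled, everything else is the routine four-arrow chase modeled on Lemma \ref{L2Nex} (5); since $\mathcal A_n$ acts on indices through $\tau$ and $R_\beta$ acts through the full cycle $\sigma$, the commutator structure falls out immediately, and no deeper idea is needed. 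I would present the two arrow-chases (one for $B_p$, one for $A_p$) compactly and leave the endpoint verification as the one line requiring care.
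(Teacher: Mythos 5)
Your proposal is correct and follows essentially the same route as the paper: an explicit four-arrow interval chase showing that $C_{\beta,t}$ fixes each $\bigl[\frac{p-1}{n}+\beta,\frac{p}{n}\bigr)$ pointwise and translates $\bigl[\frac{p-1}{n},\frac{p-1}{n}+\beta\bigr)$ onto $\bigl[\frac{[\sigma,\tau](p)-1}{n},\frac{[\sigma,\tau](p)-1}{n}+\beta\bigr)$. (Only your parenthetical reading $\omega_0(R_{\pm\beta})=\sigma^{\pm 1}$ is heuristic rather than an application of Proposition \ref{WMorph}, since $R_{\pm\beta}\notin\ker\ell$; but you correctly present it as a consistency check, not as the argument.)
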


\begin{proof}
We explicitely compute the map $C_{\beta,t} = [R_\beta,t]= R_{\beta} t R_{-\beta} t^{-1}$. We first note that the break point set of  $C_{\beta,t}$ is contained in $\{\frac{p-1}{n}, \ 
\frac{p-1}{n}+\beta ; \ p \in \{1, ..., n\} \}$ and we have 
$$\ \hskip -6mm \boldsymbol{\bigl[\frac{p-1}{n}, \frac{p-1}{n}+\beta\bigr)} 
\ra ^{\sct t^{-1}} \bigl[\frac{\tau^{-1}(p)-1}{n}, \frac{\tau^{-1}(p)-1}{n}+\beta \bigr) 
\ra ^{R_{-\beta}} \bigl[\frac{\tau^{-1}(p)-1}{n} -\beta, \frac{\tau^{-1}(p)-1}{n}\bigr) $$
$$\ \hskip -4mm \ra ^{\sct t} \bigl[\frac{\tau(\tau^{-1}(p)-1)}{n} -\beta, \frac{\tau(\tau^{-1}(p)-1)}{n}\bigr)  
\ra ^{R_{\beta}} \boldsymbol{\bigl[\frac{\tau(\tau^{-1}(p)-1)}{n}, \frac{\tau(\tau^{-1}(p)-1)}{n} +\beta \bigr)}$$
and 
$$\boldsymbol{      \bigl[\frac{p-1}{n}+\beta, \frac{p}{n} \bigr)} 
\ra ^{\sct t^{-1}}  \bigl[\frac{\tau^{-1}(p)-1}{n}+\beta, \frac{\tau^{-1}(p)}{n} \bigr)
\ra ^{R_{-\beta}}   \bigl[\frac{\tau^{-1}(p)-1}{n} , \frac{\tau^{-1}(p)}{n} -\beta \bigr) $$
$$\ra ^{\sct t}     \bigl[\frac{p-1}{n}, \frac{p}{n}-\beta \bigr) \ra ^{R_{\beta}}   
 \boldsymbol{       \bigl[\frac{p-1}{n}+\beta, \frac{p}{n} \bigr) }$$
This proves that $C_{\beta,t}$ has support in $\displaystyle \bigcup _{p=1}^{n} \ \bigl[\frac{p-1}{n}, \frac{p-1}{n}+\beta \bigr)$. In addition, its local permutation at $0$ is given by $\omega (p)-1=\tau(\tau^{-1}(p)-1)$ that is
$$\omega (p)=\tau(\tau^{-1}(p)-1)+ 1= \sigma \tau \sigma^{-1} \tau ^{-1} = [\sigma, \tau].$$ 

\vskip -8mm \end{proof}

Note that, if $\tau \notin \CR(\sigma)$ then $R_\beta$ and $t$ are not commuting and $C_{\beta,t}$ is not trivial.

\medskip

\noindent \textbf{Proof of Proposition \ref{PNex5}.}

We first prove the properties related to $[G,G]$. Lemma \ref{L4Nex5} implies that $[G,G] =\ker \ell$ is a  perfect torsion group and, by Proposition \ref{Sbeta}, it is not simple because of its non trivial normal subgroups $S_\beta$.

We claim that the finitely generated subgroups of $[G,G] =\ker \ell$ are finite.
\begin{quote}
Indeed, let $H=\langle m_i \rangle < \ker \ell$ be a  finitely generated subgroup of $\ker \ell$. By Properties \ref{proBP}, the set $\BP(H)$ is contained in the $H$-orbit of the finite set $\cup \BP(m_i)$ so it is finite since all $\ker \ell$-orbits are finite. 
\end{quote} 
The group $G$ is elementary amenable since $G/[G,G] \simeq \Z$ is abelian and $[G,G] =\ker \ell$ is the direct union of its finitely generated subgroups that are finite.

\smallskip

By Item (1) of Corollary \ref{coro3}, $G$ is not virtually nilpotent. Moreover, we are going to prove that no finite index subgroup of $\ker \ell$ is solvable. Indeed, let $R$ be a finite index subgroup of $\ker \ell$. Since a subgroup of finite index always contains a normal subgroup  also of finite index, we may suppose that $R$ is normal in $G$. Therefore for any $x\in [0, \frac{1}{n})$, the group $\omega_x(R)$ is either trivial or equal to  $\mathcal A_n$. As $\ker \ell$ is infinite, $R$ is not trivial and so there exists $x\in [0, \frac{1}{n})$ such that $\omega_x(R)$ is not trivial and then equal to $\mathcal A_n$. 

Finally, since $\mathcal A_n$ is simple, $\omega_x$ maps any iterated commutators subgroup of $R$ surjectively to $\mathcal A_n$. In particular, $R$ is not solvable and therefore $\ker \ell$ and then $G$ are not virtually solvable.

\smallskip

\begin{rema} Due to the SAF-invariant (see \cite{Bos}), the groups constructed in Sections \ref{ExMeta} and \ref{ExNoViSo}, with rationally independent $\alpha$ are not conjugated in $\iet$. Moreover, similar arguments show that Proposition \ref{PNex5} holds for the groups generated by $R_{\alpha_1}, ..., R_{\alpha_m}$ and $\mathcal A_n$ for $n\geq 5$. 
In particular, these groups are not linear since by Schur Theorem (\cite{Sch}) any linear torsion group is virtually abelian. 
\end{rema}

\bibliographystyle{alpha}
\bibliography{RefIET}

\begin{thebibliography}{JMBMdlS18}

\bibitem[Arn81]{Ar}
Pierre Arnoux.
\newblock \'echanges d'intervalles et flots sur les surfaces.
\newblock In {\em Ergodic theory ({S}em., {L}es {P}lans-sur-{B}ex, 1980)
  ({F}rench)}, volume~29 of {\em Monograph. Enseign. Math.}, pages 5--38. Univ.
  Gen\`eve, Geneva, 1981.

\bibitem[Bos16]{Bos}
Michael Boshernitzan.
\newblock Subgroup of interval exchanges generated by torsion elements and
  rotations.
\newblock {\em Proc. Amer. Math. Soc.}, 144(6):2565--2573, 2016.

\bibitem[BS62]{BS}
Gilbert Baumslag and Donald Solitar.
\newblock Some two-generator one-relator non-{H}opfian groups.
\newblock {\em Bull. Amer. Math. Soc.}, 68:199--201, 1962.

\bibitem[Cor20]{Cor}
Yves Cornulier.
\newblock Realizations of groups of piecewise continuous transformations of the
  circle.
\newblock {\em Journal of Modern Dynamics}, 16(0):59--80, 2020.

\bibitem[DFG20]{DFG}
Fran\c{c}ois Dahmani, Koji Fujiwara, and Vincent Guirardel.
\newblock Solvable groups of interval exchange transformations.
\newblock {\em Ann. Fac. Sci. Toulouse Math. (6)}, 29(3):595--618, 2020.

\bibitem[GL19a]{GL}
Nancy Guelman and Isabelle Liousse.
\newblock Distortion in groups of affine interval exchange transformations.
\newblock {\em Groups Geom. Dyn.}, 13(3):795--819, 2019.

\bibitem[GL19b]{GLrev}
Nancy Guelman and Isabelle Liousse.
\newblock Reversible maps and products of involutions in groups of iets.
\newblock {\em arXiv:1907.01808}, 2019.

\bibitem[Gro81]{Gro}
Mikhael Gromov.
\newblock Groups of polynomial growth and expanding maps.
\newblock {\em Inst. Hautes \'Etudes Sci. Publ. Math.}, (53):53--73, 1981.

\bibitem[JMBMdlS18]{JMBMS}
Kate Juschenko, Nicol\'{a}s Matte~Bon, Nicolas Monod, and Mikael de~la Salle.
\newblock Extensive amenability and an application to interval exchanges.
\newblock {\em Ergodic Theory Dynam. Systems}, 38(1):195--219, 2018.

\bibitem[Kea75]{Ke}
Michael Keane.
\newblock Interval exchange transformations.
\newblock {\em Math. Z.}, 141:25--31, 1975.

\bibitem[May43]{May}
A.~Mayer.
\newblock Trajectories on the closed orientable surfaces.
\newblock {\em Rec. Math. [Mat. Sbornik] N.S.}, 12(54):71--84, 1943.

\bibitem[Min97]{Mi}
Hiroyuki Minakawa.
\newblock Classification of exotic circles of {${\rm PL}_+(S^1)$}.
\newblock {\em Hokkaido Math. J.}, 26(3):685--697, 1997.

\bibitem[Nav11]{NaB}
Andr\'{e}s Navas.
\newblock {\em Groups of circle diffeomorphisms}.
\newblock Chicago Lectures in Mathematics. University of Chicago Press,
  Chicago, IL, spanish edition, 2011.

\bibitem[Nov09]{No}
Christopher~F. Novak.
\newblock Discontinuity-growth of interval-exchange maps.
\newblock {\em J. Mod. Dyn.}, 3(3):379--405, 2009.

\bibitem[OS15]{FS}
Anthony~G. O'Farrell and Ian Short.
\newblock {\em Reversibility in dynamics and group theory}, volume 416 of {\em
  London Mathematical Society Lecture Note Series}.
\newblock Cambridge University Press, Cambridge, 2015.

\bibitem[Sch11]{Sch}
Issai Schur.
\newblock Uber gruppen periodischer substitutionen.
\newblock {\em Sitzungsb. Preuss. Akad. Wiss.}, page 619–627, 1911.

\end{thebibliography}
\end{document}